\titleformat{\subsection}[runin]
{\bfseries} {\thesubsection{.}}{0.15cm}{}[.]
\titleformat{\subsubsection}[runin]
{\em}{\thesubsubsection{.}}{0.15cm}{}[.]
\newtheorem{theorem}{Theorem}[section]
\newtheorem{proposition}[theorem]{Proposition}
\newtheorem{claim}[theorem]{Claim}
\newtheorem{lemma}[theorem]{Lemma}
\newtheorem{corollary}[theorem]{Corollary}
\theoremstyle{definition}
\newtheorem{definition}[theorem]{Definition}
\newtheorem{remark}[theorem]{Remark}
\newtheorem{problem}[theorem]{Problem}
\numberwithin{equation}{section}
\numberwithin{figure}{section}
\def\Ccal{\mathcal{C}}
\def\Lcal{\mathcal{L}}
\def\Pcal{\mathcal{P}}
\def\be{\mathbf{e}}
\def\Ascr{\mathscr{A}}
\def\Cscr{\mathscr{C}}
\def\Oscr{\mathscr{O}}
\def\c{\mathbb{C}}
\def\cp{\mathbb{CP}}
\def\z{\mathbb{Z}}
\def\r{\mathbb{R}}
\def\n{\mathbb{N}}
\def\z{\mathbb{Z}}
\def\pgot{\mathfrak{p}}
\def\Agot{\mathfrak{A}}
\def\Igot{\mathfrak{I}}
\def\Ogot{\mathfrak{O}}
\def\dist{\mathrm{dist}}
\def\length{\mathrm{length}}
\def\Flux{\mathrm{Flux}}
\newcommand\wt{\widetilde}
\newcommand\wh{\widehat}
\newcommand\di{\partial}
\newcommand\dibar{\overline\partial}
\newcommand\GCMI{\mathrm{GCMI}}
\newcommand\CMI{\mathrm{CMI}}
\newcommand\NC{\mathrm{NC}}
\begin{document}

\fancyhead[LO]{Isotopies of conformal minimal surfaces}
\fancyhead[RE]{A.\ Alarc\'on and F.\ Forstneri\v c}
\fancyhead[RO,LE]{\thepage}

\thispagestyle{empty}

\vspace*{1cm}
\begin{center}
{\bf\LARGE Every conformal minimal surface in $\r^3$  
is isotopic to the real part  of a holomorphic null curve}   

\vspace*{0.5cm}

{\large\bf Antonio Alarc\'on $\;$ and $\;$ Franc Forstneri\v c}
\end{center}

\vspace*{1cm}

\begin{quote}
{\small
\noindent {\bf Abstract}\hspace*{0.1cm}
We show that for every conformal minimal immersion $u:M\to \r^3$ from an open 
Riemann surface $M$ to $\r^3$ there exists a smooth isotopy $u_t:M\to\r^3$ $(t\in [0,1])$ of 
conformal minimal immersions, with $u_0=u$, such that $u_1$ is the real part of a holomorphic null  curve $M\to \c^3$ 
(i.e. $u_1$ has vanishing flux). If furthermore $u$ is nonflat then $u_1$ can be chosen to have any prescribed flux and to be complete.

\vspace*{0.1cm}

\noindent{\bf Keywords}\hspace*{0.1cm} Riemann surfaces, minimal surfaces, holomorphic null  curves.

\vspace*{0.1cm}

\noindent{\bf MSC (2010):}\hspace*{0.1cm} 53C42; 32B15, 32H02, 53A10.}
\end{quote}


\section{The main results}
\label{sec:intro}

Let $M$ be a smooth oriented surface. A smooth immersion $u=(u_1,u_2,u_3):M\to \r^3$
is {\em minimal}  if its mean curvature vanishes at every point. 
The requirement that an immersion $u$ be {\em conformal} uniquely determines a 
complex structure on $M$. Finally, a conformal immersion is minimal
if and only if it is harmonic: $\Delta u=0$ (Osserman \cite{Osserman}).
A holomorphic immersion $F=(F_1,F_2,F_3):M\to\c^3$ of an open Riemann surface
to $\c^3$ is said to be a {\em null curve} if its differential 
$dF=(dF_1,dF_2,dF_3)$ satisfies the equation 
\[
		(dF_1)^2 + (dF_2)^2 + (dF_3)^2 =0.
\]
The real and the imaginary part of a null curve $M\to\c^3$ are conformal minimal immersions $M\to\r^3$.
Conversely, the restriction of a conformal minimal immersion $u:M\to \r^3$ 
to any simply connected domain $\Omega\subset M$ is the real part of a holomorphic 
null curve $\Omega\to \c^3$; $u$ is globally the real part of a null
curve if and only if its conjugate differential $d^c u$ satisfies
$\int_C d^c u=0$ for every closed curve $C$ in $M$.
This period vanishing condition means that $u$ admits a harmonic conjugate $v$, 
and $F=u+\imath v:M\to \c^3$ $(\imath=\sqrt{-1})$ is then a null curve.

In this paper we prove the following result
which further illuminates the connection between conformal minimal  
surfaces in $\r^3$ and holomorphic null curves in $\c^3$. 
We shall systematically use the term {\em isotopy} instead of the more standard 
{\em regular homotopy} when talking of smooth 1-parameter families of immersions. 

%
%
%
%
\begin{theorem}\label{th:main}
Let $M$ be an open Riemann surface. For every conformal minimal immersion $u:M\to \r^3$ 
there exists a smooth isotopy $u_t:M\to\r^3$ $(t\in [0,1])$ of conformal minimal immersions 
such that $u_0=u$ and $u_1=\Re F$  is the real part of a holomorphic null curve $F:M\to \c^3$. 
\end{theorem}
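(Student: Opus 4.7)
Via the Enneper--Weierstrass representation, specifying a conformal minimal immersion $u:M\to\r^3$ is equivalent to specifying a holomorphic map $\phi=2\di u:M\to\Agot_*$, where $\Agot_*$ denotes the null quadric $\{z\in\c^3:z_1^2+z_2^2+z_3^2=0\}$ with the origin removed, subject to the real-period condition $\Re\int_C\phi=0$ for every closed loop $C\subset M$. The flux of $u$ is the homomorphism $\Flux(u):H_1(M,\z)\to\r^3$, $C\mapsto\Im\int_C\phi$, and $u=\Re F$ for a holomorphic null curve $F:M\to\c^3$ exactly when this homomorphism vanishes. My plan is to deform $\phi$ through a smooth family of holomorphic maps $\phi_t:M\to\Agot_*$ $(t\in[0,1])$, preserving the vanishing of real periods, to a map $\phi_1$ all of whose periods vanish; the primitives $u_t(p)=u(p_0)+\Re\int_{p_0}^p\phi_t$ then assemble into the desired isotopy.

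The key device is a \emph{period-dominating spray}. Fix smooth loops $C_1,\dots,C_l$ in $M$ forming a basis of $H_1(M,\z)$ (assume first that $M$ has finite topology; the general case is handled below). The aim is to construct a holomorphic map $\Phi:M\times B\to\Agot_*$, where $B\subset\c^N$ is a small ball around $0$ with $N=3l$, satisfying $\Phi(\cdot,0)=\phi$ and such that the period map $\Pcal:B\to(\c^3)^l$, $\Pcal(\zeta)=\bigl(\int_{C_j}\Phi(\cdot,\zeta)\bigr)_{j=1}^l$, is a submersion at $\zeta=0$. Because $\Agot_*$ is a nonlinear subvariety of $\c^3$, one cannot simply add prescribed $1$-forms to $\phi$; instead, one modifies $\phi$ by the time-one flows of holomorphic vector fields on $\c^3$ that are tangent to $\Agot$, localised via holomorphic cut-offs supported on disjoint short arcs distributed along the $C_j$. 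Since tangent fields to $\Agot$ span the tangent space of $\Agot_*$ at every point, an appropriate selection provides enough independent period directions to hit all of $(\c^3)^l$ with $d\Pcal_0$.

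Since $\phi$ has zero real periods, $\Pcal(0)=\imath\,\Flux(u)\in(\imath\r^3)^l$. The straight line $t\mapsto(1-t)\Pcal(0)$ stays inside $(\imath\r^3)^l$ and, by the implicit function theorem applied to the submersion $\Pcal$, lifts to a smooth path $\zeta:[0,1]\to B$ with $\zeta(0)=0$ and $\Pcal(\zeta(t))=(1-t)\Pcal(0)$. Setting $\phi_t:=\Phi(\cdot,\zeta(t))$ yields holomorphic maps $M\to\Agot_*$ depending smoothly on $t$, with $\phi_0=\phi$ and with all periods purely imaginary; their real parts vanish along every $C_j$, so each $u_t$ is a well-defined conformal minimal immersion of $M$. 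At $t=1$ the periods $\int_{C_j}\phi_1$ vanish outright, so $\phi_1=dF$ for a holomorphic null curve $F:M\to\c^3$, and $u_1=\Re F$ after absorbing an additive constant into $F$.

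The central difficulty is the construction of the period-dominating spray $\Phi$ into the nonlinear null quadric $\Agot_*$; this is where Oka-type flexibility of $\Agot_*$ and the tangent-field spray technology from the authors' earlier work on null curves play the decisive role. When $M$ has infinite topological type, one chooses a normal Runge exhaustion $K_1\Subset K_2\Subset\cdots$ of $M$ and inductively deforms $\phi$ over successively larger $K_n$, using Mergelyan-type approximation in $\Agot_*$ at each stage to extend the previous deformation while killing the periods around the newly enclosed cycles. Throughout the procedure the immersion condition $\phi_t(p)\neq 0$ is automatic, since $\Phi$ takes values in $\Agot_*$ rather than in $\Agot$ and $\zeta(t)$ remains inside the domain $B$ on which the spray is defined.
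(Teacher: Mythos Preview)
Your proposal has a genuine gap in the finite-topology step. You construct a period-dominating spray $\Phi:M\times B\to\Agot_*$ on a \emph{small} ball $B\subset\c^{3l}$ and then claim that the straight segment $t\mapsto(1-t)\Pcal(0)$ in $(\c^3)^l$ lifts through $\Pcal$ to a path $\zeta:[0,1]\to B$. The implicit function theorem only yields a local section of $\Pcal$ near $\zeta=0$; it lets you lift the segment for $t$ close to $0$, not all the way to $t=1$. Since $\Pcal(0)=\imath\,\Flux(u)$ can be arbitrarily large while the image $\Pcal(B)$ is only a small neighbourhood of $\Pcal(0)$, there is in general no $\zeta\in B$ with $\Pcal(\zeta)=0$, and the argument breaks down. (A secondary issue: there are no ``holomorphic cut-offs supported on short arcs''; holomorphic functions on $M$ cannot have compact support. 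One can use holomorphic functions that are Runge-approximations of bumps, but this only reinforces that the resulting spray is a local object.)

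The paper circumvents this by separating the large deformation from the small correction. The large move is performed on the $1$-skeleton $C=\bigcup_j C_j$: using only smooth (not holomorphic) maps $\sigma_t:C\to\Agot^*$, one builds an isotopy from $\sigma_0=(2\di u/\theta)|_C$ to a map $\sigma_1$ whose complex periods over every $C_j$ vanish (Lemmas \ref{lem:zero-period}--\ref{lem:periods}); on curves one has essentially unlimited freedom, so arbitrarily large flux can be killed here. One then embeds $\{\sigma_t\}$ into a period-dominating spray of smooth maps on $C$, applies the parametric Mergelyan/Oka theorem (using that $\Agot^*$ is Oka and $M$ retracts onto $C$) to approximate by holomorphic maps $f_{t,w}:M\to\Agot^*$, and only \emph{then} invokes the implicit function theorem to choose $w(t)$ correcting the tiny period error introduced by the approximation. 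In your scheme the spray is asked to do the whole job; in the paper it is asked only to absorb an arbitrarily small perturbation, which is exactly what the implicit function theorem delivers. You should also treat the flat case separately (it is immediate, cf.\ Remark~\ref{rem:flat}), since for flat $u$ the image of $\phi$ lies in a single ray of $\Agot$ and no period-dominating spray exists.
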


The analogous result holds for minimal surfaces in $\r^n$ for any $n\ge 3$, and the tools used in the proof
are available in that setting as well. On a compact bordered Riemann surface we also have an up to the boundary version 
of the same result (cf.\ Theorem \ref{th:main-bordered}).

Given a conformal minimal immersion $u\colon M\to\r^3$, the {\em flux map} of $u$ is the group homomorphism 
$\Flux_u\colon H_1(M;\z)\to\r^3$ on the first homology group of $M$ 
which is given on any closed curve $C$ in $M$ by
\begin{equation} \label{eq:fluxmap}
	\Flux_u(C)=\int_C d^c u.   
\end{equation} 
We can  view $\Flux_u$ as the element of the de Rham cohomology group $H^1(M;\r^3)$ determined by the closed real 
$1$-form $d^c u= \imath(\dibar u - \di u)$ with values in $\r^3$. (Note that $d^c u$ is closed precisely when $u$ is 
harmonic: $dd^cu=0$.) A conformal harmonic immersion $u\colon M\to\r^3$ is the real part of a holomorphic null curve 
$M\to\c^3$ if and only if the flux map $\Flux_u$ is identically zero. Hence Theorem \ref{th:main} can be expressed as follows.
\begin{center}
{\em Every conformal minimal immersion is isotopic to one with vanishing flux}. 
\end{center}

Every open Riemann surface $M$ is homotopy equivalent to a wedge of circles, and its first homology group $H_1(M;\z)$ 
is a free abelian group with at most countably many generators. If $M$ is of finite genus $g$ with $m$ topological ends 
then $H_1(M;\z)\cong \z^{2g+m-1}$.

We now decribe a more general existence result for isotopies of conformal minimal immersions.
Recall that an immersion $u:M\to\r^n$ is said to be {\em complete}  if the pullback $u^*(ds^2)$ of the Euclidean metric $ds^2$ 
on $\r^n$ is a complete metric on $M$. It is easily seen that a holomorphic null curve $M\to\c^3$ is complete if and only if  
its real part $M\to\r^3$ is complete (cf.\ Osserman \cite{Osserman}). An immersion $u:M\to\r^3$ is {\em nonflat} if its image 
$u(M)\subset\r^3$ is not  contained in any affine plane. It is easily seen that every flat conformal minimal immersion from a 
connected open Riemann surface is the real part of a holomorphic null curve (see Remark \ref{rem:flat}), so 
Theorem \ref{th:main} trivially holds in this case. For nonflat immersions we have the following second main result of this paper.

%
%
%
%
\begin{theorem}\label{th:main2}
Let $M$ be a connected open Riemann surface and let $\pgot\colon H_1(M;\z)\to\r^3$ be a group homomorphism.  
For every nonflat conformal minimal immersion $u:M\to\r^3$ there exists a smooth isotopy $u_t:M\to \r^3$ $(t\in [0,1])$ 
of conformal minimal immersions such that $u_0=u$, $u_1$ is complete, and $\Flux_{u_1}=\pgot$.  
Furthermore, if $u$ is complete then an isotopy as above can be chosen such that $u_t$ is complete for every $t\in[0,1]$.
In particular, every nonflat conformal minimal immersion $M\to\r^3$ is isotopic through conformal minimal immersions  
to the real part of a complete holomorphic null curve. 
\end{theorem}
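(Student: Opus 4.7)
The plan is to perform the two tasks—adjusting the flux to $\pgot$ and achieving completeness—in succession, both through smooth isotopies of conformal minimal immersions, exploiting nonflatness of $u$ to gain enough perturbation freedom. Recall that a conformal minimal immersion $u$ is encoded by its derivative $\di u$, a holomorphic $1$-form on $M$ with values in the null quadric $\Acal=\{z\in\c^3:z_1^2+z_2^2+z_3^2=0\}\setminus\{0\}$; nonflatness is equivalent to the image of $\di u$ not being contained in any complex line of $\Acal$, and this openness is precisely what lets us freely deform the periods.

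First I would build an isotopy $(u_t)_{t\in[0,1/2]}$ of conformal minimal immersions with $u_0=u$ and $\Flux_{u_{1/2}}=\pgot$. The key tool is a period-dominating holomorphic spray through $u$: a family $u_\zeta$ of conformal minimal immersions parametrized by $\zeta$ in a polydisc around $0\in\c^N$, with $u_0=u$, whose flux map $\zeta\mapsto\Flux_{u_\zeta}\in H^1(M;\r^3)$ is a submersion at $\zeta=0$. Such sprays exist for nonflat $u$ because one can multiply $\di u$ by holomorphic functions realizing any prescribed tangent direction in the period space, using the local homogeneity of $\Acal$. The implicit function theorem then yields a smooth path $\zeta(t)$ for which $u_t:=u_{\zeta(t)}$ has flux $(1-2t)\Flux_u+2t\pgot$, in particular $\Flux_{u_{1/2}}=\pgot$.

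Starting from $u_{1/2}$, I would achieve completeness by a Nadirashvili--Jorge--Xavier style induction. Fix an exhaustion $M_1\Subset M_2\Subset\cdots$ of $M$ by smoothly bounded Runge compact domains, a base point $x_0\in M_1$, and recursively construct conformal minimal immersions $v_n:M\to\r^3$ together with isotopies $(v_n^s)_{s\in[0,1]}$ from $v_{n-1}$ to $v_n$ (starting with $v_0=u_{1/2}$) through nonflat conformal minimal immersions of flux $\pgot$, satisfying $\|v_n^s-v_{n-1}\|_{C^0(M_{n-1})}<2^{-n}$ and $\mathrm{dist}_{v_n^*(ds^2)}(x_0,\di M_n)>n$. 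Each step combines a Runge--Mergelyan approximation for conformal minimal immersions (developed by the authors) with a labyrinth construction in $M_n\setminus \mathrm{Int}(M_{n-1})$ that forces curves reaching $\di M_n$ to be long, while a period-dominating spray as in the previous step compensates the period drift introduced by the approximation. The $C^0$-smallness yields $C^\infty_{\mathrm{loc}}$-convergence $v_n\to u_1$ to a conformal minimal immersion of flux $\pgot$, the distance estimates imply its completeness, and concatenating the isotopies on intervals $[1/2+2^{-n},1/2+2^{-(n-1)}]$ assembles the full isotopy on $[0,1]$. If $u$ is already complete, I would choose the $C^0$-bounds small enough (relative to the injectivity of $u^*(ds^2)$ on each $M_n$) that every intermediate $u_t$ pulls back $ds^2$ to a metric comparable to $u^*(ds^2)$, so divergent curves retain infinite length and completeness persists for every $t$.

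The main obstacle will be the inductive step in the completeness construction: within a single isotopy $(v_n^s)_s$ one must simultaneously enlarge the intrinsic distance to $\di M_n$, keep the $C^0$-change on $M_{n-1}$ summable, preserve the flux exactly, and maintain nonflatness. Balancing distance enlargement against $C^0$-smallness is the classical Nadirashvili--Jorge--Xavier dilemma handled by labyrinth-type geometries; superimposing exact flux control on top forces one to run the period-dominating spray concurrently with the approximation, turning each induction step into an implicit-function-theorem argument whose uniform estimates must be carefully propagated through the entire recursion.
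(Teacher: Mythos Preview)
Your two-phase plan (adjust flux on $[0,1/2]$, then build completeness on $[1/2,1]$) is a workable route to the basic statement and differs from the paper, which runs a single induction over an exhaustion $M_0\subset M_1\subset\cdots$ in which the flux condition at $t=1$ and completeness are installed simultaneously via Lemma~\ref{lem:JXt}. One caution on your first phase: a single finite-dimensional period-dominating spray plus the implicit function theorem only reaches fluxes near $\Flux_u$, and cannot be submersive onto $H^1(M;\r^3)$ when $M$ has infinite topology; reaching an arbitrary $\pgot$ really requires the Mergelyan/exhaustion machinery of Sections~\ref{sec:proof1}--\ref{sec:proof} together with Lemma~\ref{lem:periods}, of the kind you already invoke in phase two.

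The genuine gap is in the ``furthermore'' clause. You propose to preserve completeness of every $u_t$ by keeping $u_t^*(ds^2)$ comparable to $u^*(ds^2)$ via $C^0$-smallness. But your own second phase destroys this comparability: the Jorge--Xavier labyrinth makes $v_n^*(ds^2)$ enormously larger than $u^*(ds^2)$ on the labyrinth set in $M_n\setminus M_{n-1}$ (that is exactly how $\dist_{v_n}(x_0,bM_n)>n$ is achieved), while off the labyrinth the Mergelyan approximation gives no lower bound on the metric at all. So neither direction of ``comparable'' survives, and $C^0$-smallness on $M_{n-1}$ alone says nothing about $\dist_{u_t}(x_0,bM_n)$, since a path could be short through $M_n\setminus M_{n-1}$ outside the labyrinth. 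The paper's mechanism is different and essential: in Lemma~\ref{lem:JXt} the deformation is a L\'opez--Ros transformation that preserves the third component $u_t^3$ exactly, so $ds_{\wt u_t}^2\ge |f_t^3\theta|^2$ gives a $t$-uniform lower bound on the metric across each annulus carrying a labyrinth; combined with the labyrinth geometry this yields $\dist_{\wt u_t}(x_0,bM)>\tau-\delta$ for \emph{every} $t\in[0,1]$ (property~(IV) of Lemma~\ref{lem:JXt}), and in the induction $\tau_i-1/i\to\infty$ when $u$ is complete. This is precisely why the paper merges the two tasks into one induction rather than running them in series.
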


It was shown by Alarc\'on, Fern\'andez, and L\'opez \cite{AFL1,AFL2} that every group homomorphism 
$\pgot\colon H_1(M;\z)\to\r^3$ is the flux map $\pgot=\Flux_u$ 
of a complete conformal minimal immersion $u\colon M\to\r^3$. The novel part of Theorem \ref{th:main2} is that one can deform 
an arbitrary nonflat conformal minimal immersion $M\to\r^3$ through a smooth family of such immersions to one 
that  is complete and has the given flux homomorphism.

Our results provide an initial step in the problem of homotopy classification
of conformal minimal immersions $M\to\r^3$ by their tangent maps; we discuss
this in Sect.\ \ref{sec:topology} below (see in particular Proposition \ref{prop:components}).
The subject of homotopy classification of immersions goes back to 
Smale \cite{Smale} and Hirsch \cite{Hirsch}, and it was later subsumed by Gromov's
h-principle in smooth geometry (see Gromov's monograph \cite{Gromov:book}).
The basic result of the Hirsch-Smale theory is that if $M$ and $N$ are smooth manifolds 
and $1\le \dim M<\dim N$ then regular homotopy classes of immersions $M\to N$ 
are in one-to-one correspondence with the homotopy classes of fiberwise injective vector bundle 
maps $TM\to TN$ of their tangent bundles; the same holds if $\dim M=\dim N$ 
and $M$ is not compact. This has been subsequently extended to several other classes
of immersions. In particular, Eliashberg and Gromov obtained the h-principle for 
holomorphic immersions of Stein manifolds to complex Euclidean spaces; see 
the discussion and references in \cite[Sec.\ 8.5]{F2011}.

Here we are interested in regular homotopy classes of conformal
minimal immersions of open Riemann surfaces into $\r^3$.  The $(1,0)$-derivative 
$\di u$ of a conformal (not necessarily harmonic) immersion $u:M\to\r^3$ gives a map 
$M\to \Agot^*=\Agot\setminus\{0\}$ with vanishing real periods 
into the punctured null quadric (\ref{eq:Agot}); {\em harmonic} (minimal) immersions 
correspond to holomorphic maps $M\to\Agot^*$. (See Sect.\ \ref{sec:prelim}.)
The question is whether regular homotopy classes 
of conformal minimal immersions are in one-to-one correspondence with the
homotopy classes of continuous maps $M\to \Agot^*$. 
(We wish to thank R.\ Kusner who pointed out (private communication)
the connection to the theory of spin structures on Riemann surfaces;
we refer to the preprint \cite{Kusner} by R.\ Kusner and N.\ Schmitt. 
Since $\Agot^*$ is an Oka manifold and every open Riemann surface $M$ is a Stein manifold, 
the homotopy classes of continuous maps $M\to \Agot^*$ 
coincide with the homotopy classes of holomorphic maps 
by the Oka principle \cite[Theorem 5.4.4]{F2011}.) One direction is provided
by \cite[Theorem 2.6]{AF2}:  {\em Every continuous map $M\to \Agot^*$ is
homotopic to the derivative of a holomorphic null immersion $M\to\c^3$, 
hence to the $(1,0)$-derivative of a conformal minimal immersion $M\to\r^3$.}
What remains unclear is whether two conformal minimal 
immersions $M\to\r^3$, whose $(1,0)$-derivatives are homotopic as maps $M\to \Agot^*$,
are regularly homotopic through conformal minimal immersions.
A more precise question is formulated as Problem \ref{pr:HE} below.

Another main result of the paper is an {\em h-Runge approximation theorem} 
for conformal minimal immersions of open Riemann surfaces to $\r^3$; see Theorem \ref{th:h-Runge}.
(Here, h stands for {\em homotopy}. This terminology is inspired by Gromov's h-Runge 
approximation theorem which plays a key role in the {\em Oka principle} 
for holomorphic maps from  Stein manifolds to elliptic and Oka manifolds;  cf.\ \cite{Gromov1989} 
and \cite[Chapter 6]{F2011}.) Basically our result says that a homotopy of 
conformal minimal immersions $u_t\colon U\to \r^3$ $(t\in [0,1])$, defined 
on a Runge open set $U$ in an open Riemann surface $M$ and such that $u_0$ 
extends to a nonflat conformal minimal immersion $M\to \r^3$, can be approximated uniformly 
on compacts in $U$ by a homotopy of conformal minimal immersions $\tilde u_t\colon M\to\r^3$
such that $\tilde u_0=u_0$. We also prove a version of this result with a fixed component function.
For the usual (non-parametric) version of this result  see \cite{AL1,AFL}. 

We now describe the content and the organization of the paper. 

In Sect.\ \ref{sec:prelim} we establish the notation and review the background.  In Sect.\ \ref{sec:lemma} 
we prepare the necessary results concerning the existence of loops with vanishing real or complex periods  
in the punctured null quadric $\Agot^*=\Agot\setminus\{0\}$ (\ref{eq:Agot}). After suitable approximation, 
using Mergelyan's theorem, a loop with vanishing real period represents the $(1,0)$-differential $\di u$ 
of a conformal minimal immersion $u\colon M\to \r^3$ along a closed embedded Jordan 
curve in our Riemann surface $M$; similarly, a loop with vanishing complex period 
represents the differential $dF=\di F$ of a holomorphic null curve $F\colon M\to\c^3$
along a curve in $M$. A reader familiar with Gromov's {\em convex integration theory}
\cite{Gromov:book,EM} will notice a certain similarity of ideas in the construction of
such loops. In order to use the Mergelyan approximation theorem and at the same time
keep the period vanishing condition we work with
{\em period dominating sprays of loops} (cf.\ Lemma \ref{lem:deform}),
using some results from our previous paper \cite{AF2} on holomorphic null curves.

In Sect.\ \ref{sec:proof1} we prove Theorem \ref{th:main} in the special case when $M$ has finite topology. 
The general case is treated in Sect.\ \ref{sec:proof}.

In Sect.\ \ref{sec:h-Runge} we prove Theorem \ref{th:h-Runge} (the h-Runge approximation theorem for 
conformal minimal immersions of open Riemann surfaces to $\r^3$).
By using this h-Runge theorem we obtain in Sect.\ \ref{sec:complete} several extensions of  
Theorem \ref{th:main}  to isotopies of {\em complete} conformal minimal immersions;
in particular, we prove Theorem \ref{th:main2}.

In Sec.\ \ref{sec:topology} we discuss the topology of the space of all conformal minimal 
immersions $M\to\r^3$ and we indicate several open questions related to the results in the paper.


\section{Notation and preliminaries} 
\label{sec:prelim}

A compact set $K$ in a complex manifold $X$ is said to be {\em holomorphically convex} (or
$\Oscr(X)$-convex) if for every point $p\in X\setminus K$ there exists a holomorphic function
$f\in\Oscr(X)$ satisfying $|f(p)|>\max_K |f|$. This notion is especially important if $X$ is a Stein manifold
(in particular, an open Riemann surface) in view of the Runge approximation theorem 
(also called the Oka-Weil theorem); see e.g.\ \cite{Hormander}.

Let $M$ be a Riemann surface. An immersion $u=(u_{1},u_{2},u_{3})\colon M\to \r^3$ is conformal
if and only if, in any local holomorphic coordinate $z=x+\imath y$ on $M$, 
the partial derivatives $u_x=(u_{1,x},u_{2,x},u_{3,x})$ and $u_y=(u_{1,y},u_{2,y},u_{3,y})$,
considered as vectors in $\r^3$, have the same Euclidean length and are orthogonal to each other 
at every point of $M$:
\begin{equation}\label{eq:conformal} 
	|u_x|=|u_y|>0, \qquad u_x\cdotp u_y=0.
\end{equation}
Equivalently,  $u_x \pm \imath u_y\in \c^3\setminus\{0\}$ are
{\em null vectors}, i.e., they lie in the {\em null quadric}
\begin{equation}
\label{eq:Agot}
	\Agot=\{z=(z_1,z_2,z_3)\in\c^3\colon z_1^2+z_2^2+z_3^2=0\}.
\end{equation}
We shall write $\Agot^*=\Agot\setminus\{0\}$. It is easily seen that $\Agot^*$
is a smooth closed hypersurface in $\c^3\setminus\{0\}$ which is the total space of a 
(nontrivial) holomorphic fiber bundle with fiber $\c^*$ over $\cp^1$
(see \cite[p.\ 741]{AF2}). In particular, $\Agot^*$ is an {\em Oka manifold} 
\cite[Proposition 4.5]{AF2}.

The exterior derivative on $M$ splits into the sum $d=\di+\dibar$ of the $(1,0)$-part $\di$
and the $(0,1)$-part $\dibar$. In any local holomorphic coordinate $z=x+\imath y$ on $M$ we have 
\begin{equation}\label{eq:di-u} 
	2\di u= (u_x - \imath u_y)dz, \quad 2\dibar u= (u_x + \imath u_y)d\bar z.
\end{equation}
Hence (\ref{eq:conformal}) shows that $u$ is conformal  if and only  if  
the differential $\di u=(\di u_1,\di u_2,\di u_3)$ satisfies the nullity condition
\begin{equation}\label{eq:sumuzero}
	(\di u_1)^2 + (\di u_2)^2 + (\di u_3)^2 =0.
\end{equation}

Assume now that $M$ is a connected open Riemann surface and that
$u\colon M\to \r^3$ is a conformal immersion. It is classical (cf.\ Osserman \cite{Osserman}) that 
\[\Delta u =2H  \nu,\] 
where $H\colon M\to\r$ denotes the mean curvature function of $u$,
$\nu\colon M\to \mathbb{S}^2\subset\mathbb{R}^3$ is the Gauss map of $u$, and $\Delta$ is the 
Laplacian operator with respect to the metric induced on $M$ by the Euclidean metric of $\r^3$ via $u$. 
Hence $u$ is minimal ($H=0$)  if and only if it is harmonic  ($\Delta u=0$). 
If $v$ is any local harmonic conjugate of $u$ then 
it follows from the Cauchy-Riemann equations that 
\[
	\di(u+\imath v) = 2\di u = 2\imath\, \di v.
\]
Thus $F=u+\imath v$ is a holomorphic immersion into $\c^3$ whose differential $dF=\di F=2\di u$
has values in $\Agot^*$ (\ref{eq:Agot}); i.e., a {\em null holomorphic immersion}. In particular, 
the differential $\di u$ of any conformal minimal immersion is a holomorphic 1-form
satisfying (\ref{eq:sumuzero}).

It is useful to introduce the {\em conjugate differential},  $d^c u= \imath(\dibar u - \di u)$. 
We have that
\begin{equation} \label{eq:dc}
	2\di u = du + \imath d^c u, \quad 
	dd^c u= 2\imath\,  \di\dibar u = \Delta u\cdotp  dx\wedge dy.
\end{equation}
If $u$ is harmonic (hence minimal) then $d^c u$ is a closed vector valued $1$-form on $M$, and
we have that $d^c u=dv$ for any local harmonic conjugate $v$ of $u$. The {\em flux map} 
of $u$  is the group homomorphism $\Flux_u\colon H_1(M;\z)\to\r^3$ given by
\[
	\Flux_u([C])=\int_C d^c u, \quad  [C]\in H_1(M;\z).
\]
The integral is independent of the choice of a path in a given homology class, and we 
shall write $\Flux_u(C)$ for $\Flux_u([C])$ in the sequel.
Furthermore, $u$ admits a global harmonic conjugate on $M$ if and only if 
the $1$-form $d^c u$ is exact on $M$, and this holds if and only if  
\begin{equation} \label{eq:moment1}
	\Flux_u(C)=\int_C d^c u =0 \quad \text{for\ every\ closed\ curve}\ C\subset M.
\end{equation}
We shall prove Theorem \ref{th:main} by finding an isotopy $u_t\colon M\to \r^3$ $(t\in [0,1])$ 
of conformal minimal immersions, with  $u_0=u$, such that $u_1$ satisfies the period condition (\ref{eq:moment1}). 

Fix a nowhere vanishing holomorphic $1$-form $\theta$ on $M$.  
(Such exists by the Oka-Grauert principle, cf.\ Theorem 5.3.1 in \cite[p.\ 190]{F2011}.)
It follows from (\ref{eq:di-u}) that $2\di u= f \theta$ where $f=(f_1,f_2,f_3)\colon M\to \Agot^*$
is a holomorphic map satisfying 
\begin{equation}\label{eq:WRperiods}
\int_C \Re (f\theta)= \int_C du=0\quad \text{for any closed curve $C$ in $M$}.
\end{equation}
Furthermore, we have $u=\Re F$ for some null holomorphic immersion $F\colon M\to\c^3$ if and only if 
$\int_C f\theta = 0$ for all closed curves $C$ in $M$. 
The meromorphic function on $M$ given by
\begin{equation}\label{eq:gaussmap}
g :=\frac{f_3}{f_1-\imath f_2}
\end{equation}
is the stereographic projection of the Gauss map of $u$, and the map $f=2\di u/\theta$ 
can be recovered from the pair $(g,f_3)$ by the expression
\begin{equation}\label{eq:WR}
f=\left(\frac12\Big(\frac1{g}-g\Big) , \frac{\imath}2\Big(\frac1{g}+g\Big) , 1 \right)f_3.
\end{equation}
The pair $(g,f_3\theta)$ is called the {\em Weierstrass data} of $u$. 
The Riemannian metric $ds_u^2$ induced on $M$ by the Euclidean metric of $\r^3$ via the immersion $u$ equals
\begin{equation}\label{eq:metric}
ds_u^2= \frac12 \, |f\theta|^2=\frac14 \left( \frac1{|g|}+|g|\right)^2|f_3|^2|\theta|^2.
\end{equation}
We denote by $\dist_u$ the distance function induced on $M$ by $ds_u^2$.
Conversely, given a meromorphic function $g$ and a holomorphic function $f_3$ on $M$ such that the map $f$ \eqref{eq:WR} 
has no poles, then $f$ assumes values in $\Agot$. If in addition $f$ does not vanish anywhere on $M$ and satisfies 
\eqref{eq:WRperiods} then $ds_u^2>0$ everywhere on $M$ (see \eqref{eq:metric}), and $f\theta$ integrates to a 
conformal minimal immersion $u\colon M\to\r^3$ given  by $u(x)=\int^x \Re(f\theta)$ for $x\in M$.

%
%
%
%
\begin{remark}\label{rem:flat}
If $u$ is a {\em flat} (planar) immersion, in the sense that the image
$u(M) \subset\r^3$ lies in an affine $2$-plane in $\r^3$, 
we may assume after an orthogonal change of coordinates that $u_3=const$.
In this case (\ref{eq:di-u}) implies  $\di u_1=\pm \imath \di u_2$ which gives 
$d^c u_1=\pm du_2$ and $d^c u_2=\pm du_1$, so $d^c u$ is exact. 
This shows that {\em every flat conformal minimal immersion is the real part of a holomorphic 
null curve $F\colon M\to\c^3$.}  In this case the image of $dF$ lies in a ray $\c \nu\subset \c^3$ spanned  
by a null vector $0\ne \nu \in \Agot$, and $F(M)$ is contained in an affine complex line
$a+\c \nu \subset \c^3$.   Such {\em flat null curves} are precisely those that are {\em degenerate} 
in the sense of \cite[Definition 2.2]{AF2}. Note that every open Riemann surface $M$ admits flat null 
holomorphic immersions $M\to\c^3$ of the form  $F(x)=e^{g(x)} \nu$ $(x\in M)$,
where $0\ne \nu\in \Agot$ is a null vector and $g\in \Oscr(M)$ 
is a holomorphic function  without critical points \cite{GN}.
\end{remark}

We denote by $\CMI(M)$ the set of all conformal minimal immersions $M\to \r^3$
and by $\CMI_*(M)\subset \mathrm{CMI}(M)$ the subset consisting of
all nonflat immersions. By $\NC(M)$ we denote the space of all null holomorphic immersions 
$F\colon M\to\c^3$, and $\NC_*(M)$  is the subset of $\NC(M)$ consisting of nonflat immersions.
These spaces are endowed with the compact-open topology. We have natural inclusions
\[
	\Re\NC(M) \hookrightarrow \CMI(M), \quad  \Re\NC_*(M) \hookrightarrow \CMI_*(M),
\]
where $\Re\NC(M)=\{\Re F\colon F\in \NC(M)\}$ is the kernel of the flux map (\ref{eq:moment1}) on $\CMI(M)$.

If $K\subset M$ is a compact subset, we denote by $\CMI(K)$ the set of all conformal minimal immersions of unspecified open neighborhoods of $K$ into $\r^3$, and by $\CMI_*(K)\subset \mathrm{CMI}(K)$ the subset consisting of all immersions which are not flat on any connected component. We define $\NC(K)$ and $\NC_*(K)$ in the analogous way.

%
%
%
%
%

\section{Loops with prescribed periods in the null quadric}\label{sec:lemma}

Assume that $C$ is a closed, embedded, oriented, real analytic curve in $M$. 
There are an open set $W\subset M$ containing $C$ and a biholomorphic map 
$z\colon W \to \Omega$ onto an annulus $\Omega=\{z\in \c\colon r^{-1} <|z|< r\}$
taking $C$ onto the positively oriented unit circle 
\[
	\mathbb{S}^1=\{z\in \c: |z|=1\}.
\]  
The exponential map $\c\ni \zeta= x+\imath y \mapsto \exp(2\pi \imath\, \zeta)\in \c^*$ 
provides a universal covering of the annulus $\Omega$ by the strip 
$\Sigma =\{x+\imath y\colon x\in \r,\ |y|< (2\pi)^{-1}\log r \} \subset \c$, 
mapping the real axis $\r=\{y=0\}$ onto the circle $\mathbb{S}^1\cong \r/\z$. 
We shall view $\zeta=x+\imath y$ as a {\em uniformizing coordinate} on $W$, with $C=\{y=0\}$. 
The restriction of a conformal harmonic immersion $u\colon M\to\r^3$ to $W$ 
is given in this coordinate by a $1$-periodic conformal harmonic immersion 
$U\colon\Sigma\to \r^3$. Along $y=0$ we have 
\begin{equation}\label{eq:Taylor}
	U(x+\imath y) = h(x) - g(x) y + O(y^2)
\end{equation}
where $h(x)=U(x+\imath 0)$ and $g(x)=-U_y(x+\imath 0)$ are smooth $1$-periodic maps $\r\to\r^3$, 
$h$ is an immersion, and the remainder $O(y^2)$ is bounded by $cy^2$ for some $c>0$ 
independent of $x$. We have that
\[
	2\frac{\di}{\di \zeta} U(x+\imath y)|_{y=0} = \left(U_x - \imath U_y\right)|_{y=0} = 
	h'(x)+\imath  g(x).
\]
The conformality condition (\ref{eq:conformal}) implies that 
\begin{equation}\label{eq:conformal2}
	g(x) \cdotp  h'(x)=0 \quad \text{and} \quad |g(x)| = |h'(x)|>0
	\quad \text{hold\ for\ all} \ x\in \r.
\end{equation}
We also have that $d^c U =-U_y dx+U_x dy$ and hence 
\[
	\int_C d^c u= \int_0^1  d^c U= - \int_0^1 U_y(x+\imath 0) dx = \int_0^1 g(x) dx.
\] 
Condition (\ref{eq:conformal2}) is equivalent to saying that the map
$\sigma = h'+\imath g\colon \r \to \Agot^* = \Agot\setminus\{0\}$  
is a loop in  the null quadric (\ref{eq:Agot}) whose real part has vanishing period: 
\[
	\int_0^1 \Re \sigma(x) dx=\int_0^1 h'(x)dx =0.
\] 
In this section we prove that for every such $\sigma$ 
and for any vector $v\in\r^3$ there is an isotopy of $1$-periodic maps 
$\sigma_t\colon \r \to \Agot^*$ $(t\in [0,1])$ such that $\sigma_0=\sigma$ and
\begin{equation}\label{eq:isotopy}
	\int_0^1 \Re \sigma_t(x) dx=0\ \ \text{for\ all}\ t\in [0,1], \qquad
	\int_0^1 \Im \sigma_1(x) dx=v.	
\end{equation}
(See Lemmas \ref{lem:zero-period} and \ref{lem:periods}.) 
This will be one of the main steps in the proof of Theorems \ref{th:main} and \ref{th:main2}.
(The special case $v=0$  will be of importance for the proof of Theorem  \ref{th:main}.)

We denote by $\Igot$ the space of all smooth $1$-periodic immersions $\r\to\r^3$
endowed with the $\Cscr^\infty$ topology. We identify an immersion $h\in \Igot$ 
with a smooth immersion $\mathbb{S}^1=\r/\z\to\r^3$ of the circle into $\r^3$. 
We shall say that $h\in \Igot$ is {\em nonflat} on a segment $L\subset [0,1]$ 
if the image $h(L)$ is not contained in any affine plane of $\r^3$.

%
%
%
%
\begin{lemma}\label{lem:immersions}
Any pair of immersions $h_0,h_1\in \Igot$ can be connected by a smooth path of immersions $h_t\in\Igot$ $(t\in [0,1])$. 
If furthermore $h_0,h_1\in \Igot$ agree on a proper closed subinterval $I\subset [0,1]$ then the isotopy $h_t$ can be 
chosen fixed on $I$. If $h_0|_L$ is nonflat on a segment $L\subset [0,1]$ then we can arrange that $h_t|_L$ is 
nonflat for every $t\in [0,1]$.
\end{lemma}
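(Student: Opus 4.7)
The plan is to start from the straight-line homotopy $H(x,t)=(1-t)h_0(x)+th_1(x)$ between $h_0$ and $h_1$ in $C^\infty(\mathbb{S}^1,\r^3)$ and then remove any zeros of its tangent map $\Phi(x,t):=H_x(x,t)$ by a small parametric perturbation supported in the open strip $\mathbb{S}^1\times(0,1)$. The crux is a dimension count: $\mathbb{S}^1\times[0,1]$ is two-dimensional, while the bad locus $\{0\}\subset\r^3$ has codimension three, so a generic perturbation of $\Phi$ avoids $\{0\}$ altogether.

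Concretely, since $h_0$ and $h_1$ are immersions, the compact set $\Phi^{-1}(0)$ lies entirely in the open strip $\mathbb{S}^1\times(0,1)$. For any smooth $\eta\colon\mathbb{S}^1\times[0,1]\to\r^3$ supported there, the homotopy $H+\eta$ still matches $h_0$ and $h_1$ at the two endpoints and has tangent map $\Phi+\eta_x$. Since the evaluation $\eta\mapsto\eta_x$ is locally surjective, Thom's parametric transversality theorem yields an arbitrarily $C^\infty$-small $\eta$ making $\Phi+\eta_x$ transverse to $\{0\}$, which by the codimension count means nowhere zero. The resulting path $h_t(x):=H(x,t)+\eta(x,t)$ then lies in $\Igot$ for every $t$. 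Equivalently, one can cover $\Phi^{-1}(0)$ by finitely many small coordinate boxes in $\mathbb{S}^1\times(0,1)$ and apply local perturbations glued by a partition of unity, bypassing the abstract transversality theorem.

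For the relative assertion, if $h_0=h_1$ on $I$ then $H$ is constant in $t$ on $I\times[0,1]$ and already immersive there, so we simply restrict the support of $\eta$ to $(\mathbb{S}^1\setminus I)\times(0,1)$; the transversality argument works identically on this open set and leaves the isotopy fixed on $I$. For the nonflatness version, assuming also that $h_1|_L$ is nonflat (forced by the conclusion at $t=1$), we use that nonflatness of $h|_L$ is a $C^0$-open condition and that flat immersions of $L$ form an infinite-codimension subset of $C^\infty(L,\r^3)$. A further small perturbation of the already constructed isotopy, supported on a short subarc of $L$ in the interior of $[0,1]$, deflects $h_t|_L$ away from the flat locus for every $t$; taking this perturbation $C^1$-small preserves the immersion property.

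The principal obstacle is combining the three features---immersive for every $t$, fixed on $I$, and nonflat on $L$---within a single isotopy. Each is generic and open in its own right, but one must order the successive perturbations carefully so that later ones, while small, do not violate properties enforced earlier. This is a bookkeeping matter rather than a conceptual difficulty; the essential geometric ingredient is the dimension inequality $\dim(\mathbb{S}^1\times[0,1])=2<3=\dim\r^3$, which makes the immersion property generic for two-parameter families of maps into $\r^3$.
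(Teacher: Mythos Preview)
Your proof is correct and follows essentially the same strategy as the paper's: start from the straight-line homotopy $\tilde h_t=(1-t)h_0+th_1$, then invoke transversality (the paper cites the jet transversality theorem, you spell out the dimension count $2<3$ explicitly) to generically perturb away the zeros of the derivative while keeping the ends fixed, and handle nonflatness by noting that flatness is a closed, infinite-codimension condition. Your treatment is more detailed than the paper's three-line sketch, and your observation that the nonflatness clause implicitly requires $h_1|_L$ to be nonflat as well is a valid point the paper glosses over.
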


\begin{proof}
Connect $h_0$ and $h_1$ by $\tilde h_t=(1-t)h_0+h_1$ $(t\in [0,1])$. The jet transversality
theorem shows that a generic perturbation $\{h_t\}$ of $\{\tilde h_t\}$ 
with fixed ends at $t\in\{0,1\}$ (or one that is in addition fixed on a subinterval $I\subset [0,1]$
on which $h_0$ ad $h_1$ agree) yields a smooth isotopy of immersions connecting $h_0$ and $h_1$.
The nonflatness condition can be satisfied by a generic deformation; observe also that being flat is a closed
condition.
\end{proof}

\begin{lemma}\label{lem:zero-period}
Every smooth $1$-periodic immersion $h_0\colon\r\to\r^3$  (i.e., $h_0\in \Igot$) can be 
approximated in $\Igot$ by a smooth $1$-periodic immersion $h\colon\r\to\r^3$ for which there 
exists a smooth  $1$-periodic map $g\colon\r\to\r^3$ satisfying the following properties:
\begin{enumerate}[\rm (i)]
\item   $g(x) \cdotp h'(x)=0$ and $|g(x)| = |h'(x)|>0$ for all $x\in \r$, and 
\item  $\int_0^1 g(x)dx = 0$.
\end{enumerate}
A map $g$ with these properties can be chosen in any given homotopy class of sections 
of the circle bundle over $\r/\z\cong \mathbb{S}^1$ determined by the condition {\rm (i)}.
\end{lemma}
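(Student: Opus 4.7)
The plan is to reformulate the problem as one about 1-periodic loops in the punctured null quadric $\Agot^* \subset \c^3$, parameterize sections of the circle bundle by a phase function, and show solvability via a Hahn--Banach duality argument combined with the implicit function theorem.

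A pair $(h, g)$ satisfying (i) is precisely a smooth 1-periodic map $\sigma := h' + ig : \r \to \Agot^*$, since the null condition $\sigma \cdot \sigma = 0$ with $\sigma \neq 0$ encodes (i). Because $h$ is 1-periodic, $\int_0^1 \Re \sigma\, dx = 0$ is automatic, so (ii) amounts to $\int_0^1 \sigma\, dx = 0 \in \c^3$; the homotopy class of $g$ corresponds to a class $k \in \pi_1(\Agot^*) \cong \z$ for $\sigma$. Fixing a smooth orthonormal frame $(n(x), m(x))$ of the normal plane to $h'(x)$, any section $g$ in the winding-$k$ class has the form $g(x) = |h'(x)| \bigl( \cos \phi(x)\, n(x) + \sin \phi(x)\, m(x) \bigr)$ with $\phi(x+1) - \phi(x) = 2\pi k$. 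The task reduces to showing that $0$ lies in the image of the period map $F(\phi) := \int_0^1 |h'(x)| \bigl( \cos \phi(x)\, n(x) + \sin \phi(x)\, m(x) \bigr)\, dx \in \r^3$.

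To show $0$ lies in the closure of the image of $F$, I argue by duality: if not, Hahn--Banach yields a linear functional $L : \r^3 \to \r$ with $L(F(\phi)) \leq c < 0$ for all $\phi$ in the class. Writing $|h'(x)|\bigl( L(n(x)) + i L(m(x)) \bigr) = r(x) e^{i \psi(x)}$, this rewrites as
\[
L(F(\phi)) = \int_0^1 r(x) \cos \bigl( \phi(x) - \psi(x) \bigr)\, dx.
\]
For any 1-periodic $h$ the function $h' \cdot v$ has zero average for every $v \in \r^3$, hence has a zero, so the normal planes $\{ h'(x)^\perp \}$ cover $\r^3$ collectively, forcing $r \not\equiv 0$ for any nonzero $L$. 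As $\phi$ varies in its class by a constant shift of $\phi - \psi$, the quantity $L(F(\phi))$ ranges continuously through a symmetric interval about $0$, so the value $L(F(\phi)) = 0$ is achievable---contradicting the strict negativity. Thus $0$ is in the closure of the image.

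To upgrade this to $0$ being in the image itself, I apply the implicit function theorem near a phase $\phi_0$ at which $dF(\phi_0)$ is surjective onto $\r^3$; the image of $dF(\phi_0)$ is the $\r$-span of the vectors $\bigl\{ |h'(x)| \bigl( -\sin \phi_0(x)\, n(x) + \cos \phi_0(x)\, m(x) \bigr) \bigr\}_{x \in \r/\z}$, and this equals $\r^3$ provided $h$ is sufficiently nondegenerate (for example, nonflat). The main obstacle is choosing $\phi_0$ simultaneously with $F(\phi_0)$ close to $0$, surjectivity of $dF$, and membership in the prescribed homotopy class: this is where the $\Igot$-approximation clause plays its role, for replacing $h_0$ by a $C^\infty$-close nonflat $h$ guarantees surjectivity of $dF$ for generic $\phi_0$, allowing the IFT to produce the required section $g$. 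The uniform handling of these steps across all homotopy classes will be facilitated by the period-dominating spray machinery developed in Lemma~\ref{lem:deform}.
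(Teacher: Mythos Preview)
Your approach is genuinely different from the paper's, and the duality step is correct and elegant: the symmetry $F(\phi + \pi) = -F(\phi)$ immediately shows that the image of the period map cannot be separated from $0$ by any linear functional, so $0 \in \overline{\mathrm{Im}(F)}$. However, there is a real gap in passing from closure to membership. You need a single phase $\phi_0$ at which \emph{both} $F(\phi_0)$ is close to $0$ \emph{and} $dF(\phi_0)$ is surjective, and you have not shown these can be arranged simultaneously: genericity of surjectivity and the duality argument pull in different directions, and the radius of the ball supplied by the implicit function theorem could shrink to zero along a sequence $\phi_n$ with $F(\phi_n)\to 0$. Your appeal to Lemma~\ref{lem:deform} does not close this: that lemma produces holomorphic sprays of loops in $\Agot^*$ varying the full $\sigma$, not sprays that fix $\Re\sigma=h'$ and vary only the phase, so it does not directly furnish the $\phi_0$ you need. (A minor side point: $\pi_1(\Agot^*)\cong\z_2$, not $\z$; the $\z$ you want indexes homotopy classes of sections of the trivial circle bundle $E_h\to\mathbb{S}^1$, which is a different group.)

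The paper avoids this difficulty by an explicit finite-dimensional construction rather than a closure-plus-IFT scheme. It first flattens $h_0$ so that $h'\equiv\be_1$ on a short interval $[0,3\delta]$, then introduces a three-parameter family $h_p$ $(p\in\mathbb{B}\subset\r^3)$ by an explicit perturbation of $h'$ on $[0,\delta]$, and builds $g_p$ explicitly on $[0,\delta]\cup[2\delta,3\delta]$; on the complement, $g_p$ is made to spin rapidly so that its contribution to the integral is negligible and any prescribed homotopy class is realized. A direct estimate then gives $\int_0^1 g_p\,dx=\epsilon\delta\bigl(Ap+O(\epsilon)\bigr)$ for an orthogonal $A$, and a topological degree argument on $|p|=1$ produces a zero inside $\mathbb{B}$. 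The key advantage is that the single finite-dimensional parameter $p$ controls the period with a quantified lower bound on transversality, so no separate closeness-plus-surjectivity step is required.
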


%
%
%
%
\begin{remark}\label{rem:bundleE}
The last sentence in Lemma \ref{lem:zero-period} requires a comment. 
Denote the coordinates on $\c^3$ by $z=\xi + \imath \eta$, with $\xi,\eta\in\r^3$.
Let $\pi\colon \c^3=\r^3\oplus \imath \r^3 \to\r^3$ be the projection
$\pi(\xi +\imath \eta)=\xi$. Then $\pi^{-1}(0)\cap\Agot =\{0\}$ and
$\pi\colon \Agot^*\to \r^3\setminus \{0\}$ is a real analytic fiber bundle with circular fibers given by 
\begin{equation}\label{eq:fiberbundle}
	 \Agot \cap \pi^{-1}(\xi) = \{\xi+\imath \eta\in\c^3: \xi\cdotp \eta=0,\ |\xi|=|\eta|\}
	 \cong \mathbb{S}^1, \quad \xi \in \r^3\setminus \{0\}.
\end{equation}
An immersion $h\in \Igot$ determines the circle bundle 
$E_h=(h')^*(\Agot^*)\mapsto \r/\z\cong \mathbb{S}^1$ 
(the pull-back of $\Agot^*\to \r^3\setminus\{0\}$ by the derivative $h'\colon\r\to\r^3\setminus\{0\}$), and 
sections of $E_h$ are $1$-periodic maps $g\colon\r\to \r^3$ satisfying condition  (i) in Lemma \ref{lem:zero-period}.  
Every oriented circle bundle $E_h \to \mathbb{S}^1$ is trivial, and the set of homotopy classes
of its sections $\mathbb{S}^1\to E_h$ can be identified with the fundamental group $\pi_1(\mathbb{S}^1)=\z$. 
\end{remark}

%
%
\begin{proof}[Proof of Lemma \ref{lem:zero-period}]
Pick  a $\delta >0$ with $3\delta<1$. We  approximate $h_0$ by an
immersion $\tilde h_0\in \Igot$ whose derivative is constant on 
$J=[0,3\delta]$ and such that $\tilde h_0$ agrees with $h_0$ outside a slightly bigger interval $J'\supset J$. 
(We think of intervals in $[0,1]$ as arcs in the circle $\r/\z=\mathbb{S}^1$. The rate of approximation
of $h_0$ by $\tilde h_0$  will of course depend on $\delta$ which we are free to choose as small as we wish.)
Replacing $h_0$ by $\tilde h_0$ we assume from now on that $h_0$ satisfies these properties. 

Let $\be_1,\be_2,\be_3$ denote the standard basis of $\r^3$.
After an orthogonal rotation and a dilation on $\r^3$  
we may assume that $h'_0(x)=\be_1$ for $x\in [0,3\delta]$. 
Let $\mathbb{B}$ denote the closed  unit ball in $\r^3$. Pick a number $0<\epsilon<1$ and
a family of immersions $h_p\in \Igot$, depending smoothly on 
$p=(p_1,p_2,p_3)= p_1\be_1+p_2\be_2+p_3\be_3\in \mathbb{B}$ 
and with $h_0$ the given immersion, such that
\[
	h'_p(x) = \begin{cases} \be_1 +\epsilon p, & \text{if}\ x\in [0,\delta], \\
                                           \be_1,                     & \text{if}\   x\in [2\delta,3\delta].
                      \end{cases}                
\]
We choose $h_p$ to agree with $h_0$ outside a small neighborhood of the indicated intervals,
so every $h_p$ is close to $h_0$ (depending on the choice of $\epsilon$).
Let $A\in O(3)$ be the orthogonal linear transformation on $\r^3$ given by
\[
	A\be_1=\be_2, \quad A\be_2=-\be_1,  \quad A\be_3=\be_3.
\]
Define a locally constant map $\tilde g_p\colon [0,\delta]\cup  [2\delta,3\delta] \to \r^3$ by setting 
\[
	\tilde g_p(x) = \begin{cases} \be_2+\epsilon Ap - \epsilon^2 \frac{p_3^2}{1+\epsilon p_1} \be_1, 
									& \text{if}\ x\in [0,\delta]; \\
                                           -\be_2,                     & \text{if}\   x\in [2\delta,3\delta].
                      \end{cases}                
\]
Since $0<\epsilon<1$ and $p\in \mathbb{B}$, we have $1+\epsilon p_1>0$ and hence $\tilde g_p$ is well defined;
furthermore, we have that $\tilde g_p(x) \ne 0$ for all $x\in [0,\delta]\cup[2\delta,3\delta]$ and $p\in \mathbb{B}$. 
A calculation shows that $\tilde g_p(x)\cdotp h'_p(x)=0$ for 
$x \in [0,\delta] \cup [2\delta,3\delta]$  and
\begin{equation}\label{eq:integral}
		\int_0^\delta \tilde g_p(x)dx + \int_{2\delta}^{3\delta} \tilde g_p(x)dx = 
		\epsilon\delta \left(Ap - \epsilon  \frac{p_3^2}{1+\epsilon p_1} \be_1 \right) .
\end{equation}
Set
\begin{equation}\label{eq:g-p}
     g_p(x) = \begin{cases} 
                      \frac{|\be_1+\epsilon p|}{|\tilde g_p(x)|} \, \tilde g_p(x), &   \text{if}\ x\in [0,\delta]; \\
                        \tilde g_p(x)=-\be_2,  & \text{if}\   x\in [2\delta,3\delta].
                    \end{cases}                
\end{equation}
Clearly $|g_p(x)|=|h'_p(x)|$ and $g_p(x)\cdotp h'_p(x)=0$ for all $x \in [0,\delta] \cup [2\delta,3\delta]$
and $p\in \mathbb{B}$. For $x \in [0,\delta]$ we also have  
$\tilde g_p(x) = \be_2+\epsilon Ap +O(\epsilon^2)= A(\be_1+\epsilon p) + O(\epsilon^2)$.
Since $A$ is orthogonal, we get
$|\tilde g_p(x)| = |A(\be_1+\epsilon p)| + O(\epsilon^2) = |\be_1+\epsilon p|+O(\epsilon^2)$
and hence
\[
	\frac{|\be_1+\epsilon p|}{|\tilde g_p(x)|} = 
	\frac{|\be_1+\epsilon p|}{|\be_1+\epsilon p| + O(\epsilon^2)}
        =	1+ O(\epsilon^2).
\]
This shows that the rescaling in the definition of $g_p$ (\ref{eq:g-p}) 
changes the integrals in (\ref{eq:integral}) 
only  by a term of size $O(\epsilon^2\delta)$, so we have 
\begin{equation}\label{eq:integral2}
		\int_0^\delta g_p(x)dx + \int_{2\delta}^{3\delta} g_p(x)dx = 
		\epsilon\delta \left(Ap  + O(\epsilon) \right) .
\end{equation}

We now extend each $g_p$ to a smooth $1$-periodic map $\r\to\r^3$, depending  smoothly on 
$p\in \mathbb{B}$, such that the following conditions hold:
\begin{itemize}
\item[\rm (a)]    $g_p(x) \cdotp  h'_p(x)=0$ and $|g_p(x)|=|h'_p(x)|>0$ for all $x\in \r$ and $p\in \mathbb{B}$, and
\item[\rm (b)]    $\big| \int_\delta^{2\delta} g_p(x) dx +\int_{3\delta}^{1} g_p(x)dx \big| < \frac{1}{3} \epsilon\delta$
for all $|p|=1$.
\end{itemize}
Condition (a) is compatible with the definition of $g_p$ on $[0,\delta] \cup [2\delta,3\delta]$. 
Condition (b) can be achieved by choosing $g_p(x)$ to spin sufficiently fast 
(with nearly constant angular velocity)
along the circle bundles defined by Condition (a) as $x$ traces the intervals in the two integrals. 
Since spinning in both directions is allowed, and we can change the direction
on short intervals with an arbitrarily small contribution to the integral, 
we can arrange that $g_p$ belongs to any given homotopy class 
of sections (independent of $p\in \mathbb{B}$).  

Choosing $\epsilon>0$ small enough, Condition (b) together with (\ref{eq:integral2})  implies
\[
	\left| \int_0^1 g_p(x)dx - \epsilon\delta Ap \, \right| < \frac{\epsilon\delta}{2},
	\qquad  p\in \mathbb{B}.
\]
Since $|Ap|=|p|$ for all $p\in\r^3$, it follows that the map 
$\mathbb{B}\ni p \longmapsto \int_0^1 g_p(x)dx \in \r^3$ 
is nowhere vanishing on the sphere $\mathbb{S}^2=b\mathbb{B}=\{|p|=1\}$ 
and the restricted map $\mathbb{S}^2\mapsto \r^3\setminus \{0\}\simeq \mathbb{S}^2$ has 
the same degree as the map $p\mapsto Ap$, which is one.
Hence there exists a point $p\in \mathring{\mathbb B}$ for which $\int_0^1 g_p(x)dx =0$. 
The pair $(g,h)=(g_p,h_p)$ then clearly satisfies Lemma \ref{lem:zero-period}.
\end{proof}

We shall also need the following version of Lemma \ref{lem:zero-period}
in which all maps remain fixed on a proper subinterval of $[0,1]$ and the period
of $g_1$ equals any given vector in $\r^3$.

%
%
%
%
\begin{lemma}\label{lem:periods}
Let $g_0,h_0\colon \r\to\r^3$ be a smooth $1$-periodic maps satisfying  Condition (\ref{eq:conformal2}),
and let $v\in\r^3$. Given a proper closed subinterval $I\subset [0,1]$, there exist smooth isotopies 
of $1$-periodic maps $g_t,h_t\colon\r\to\r^3$ $(t\in [0,1])$ such that 
\begin{itemize}
\item $g_t(x)=g_0(x)$ and $h_t(x)=h_0(x)$ for $x\in I$ and $t\in [0,1]$, 
\item Condition (\ref{eq:conformal2}) holds for $(g_t,h_t)$ for every $t\in [0,1]$, and 
\item $\int_0^1 g_1(x)dx = v$.
\end{itemize}
Furthermore, given a segment $I'\subset [0,1]$ such that $h_0|_{I'}$ is nonflat, we can choose $h_t,g_t$ 
as above such that  $h_t|_{I'}$ is nonflat for every $t\in [0,1]$.
\end{lemma}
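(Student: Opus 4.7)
The proof adapts the construction of Lemma \ref{lem:zero-period} to target a prescribed integral $v \in \r^3$ in place of zero, with all modifications supported in the complement of $I$. Since $I \subsetneq [0,1]$ is proper, fix an open subinterval $J \subset [0,1] \setminus I$ of positive length; all isotopies will be the identity outside $J$, hence fixed on $I$. If $I'$ is given, shrink $J$ so that it is disjoint from a subsegment of $I'$ on which $h_0$ is nonflat; since nonflatness is an open condition, this ensures $h_t|_{I'}$ remains nonflat throughout.

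I would begin with a short preparatory isotopy $(g_t, h_t)_{t \in [0, 1/2]}$ supported in $J$ that brings $(g_0, h_0)$ to a pair $(\tilde g, \tilde h)$ with $\tilde h'$ constant on a central subinterval $J_0 = [a, a+3\delta] \subset J$, and $\tilde g$ a correspondingly chosen orthogonal constant of matching length on $J_0$. After an orthogonal change of coordinates on $\r^3$ (absorbed into the construction), I may assume $\tilde h' = \be_1$ and $\tilde g = \be_2$ on $J_0$. This preparatory isotopy exists by an argument analogous to the opening reduction in the proof of Lemma \ref{lem:zero-period}: a smooth path of immersions $h_t$ is obtained via Lemma \ref{lem:immersions}, and a compatible path of $g_t$'s arises as a section of the evolving circle bundle (cf.\ Remark \ref{rem:bundleE}), which is trivial over a contractible subinterval.

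On $J_0$, I would then replay the parametric construction from the proof of Lemma \ref{lem:zero-period}, producing for $p \in \mathbb{B}$ and small $\epsilon > 0$ a smooth family $(g_p, h_p)$ satisfying (\ref{eq:conformal2}) and
\[
    \int_0^1 g_p\, dx \;=\; \int_0^1 \tilde g\, dx + \epsilon\delta\, Ap + O(\epsilon^2 \delta).
\]
By the same degree argument as in Lemma \ref{lem:zero-period}, the map $p \mapsto \int_0^1 g_p\, dx$ covers a ball of radius of order $\epsilon\delta$ around $\int_0^1 \tilde g\, dx$. For $v$ in this ball, picking $p_*$ with $\int_0^1 g_{p_*}\, dx = v$ and taking $t \mapsto (g_{tp_*}, h_{tp_*})$ completes the isotopy.

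For an arbitrary $v \in \r^3$ (not necessarily close to $\int_0^1 \tilde g\, dx$), this perturbative step must be preceded by a direct non-perturbative modification on a second subinterval of $J$: on a short subinterval I replace $(\tilde g, \tilde h)$ by a pair of constants $(g^*, h^*)$, orthogonal and of equal length, with $g^*$ chosen so that its integral contribution realizes the bulk of the required shift $v - \int_0^1 \tilde g\, dx$. I then interpolate back to $(\tilde g, \tilde h)$ on adjoining subintervals so that $h_1$ remains smooth and $1$-periodic and the transition contributions to $\int_0^1 g_1\, dx$ are negligible; the latter is achieved by letting $g_1$ spin rapidly around the $h'_1$-axis while staying on the circle bundle imposed by (\ref{eq:conformal2}). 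Any small residual error is then eliminated by the degree argument of the previous paragraph, applied on a third subinterval of $J$. The main difficulty is precisely this non-perturbative step: arranging $h_1$ to make a large excursion on a short interval while returning smoothly and with the correct integral to $\tilde h$ at the endpoints of the modification, and keeping $g_1$ in lockstep with controllable transition integrals. Once this is set up, concatenating the preparatory, direct, and correction isotopies yields the required smooth isotopy $(g_t, h_t)_{t \in [0,1]}$ with $\int_0^1 g_1\, dx = v$.
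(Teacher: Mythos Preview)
Your proposal is essentially correct and follows the same three-phase architecture as the paper: a preparatory normalization, a non-perturbative step to bring $\int_0^1 g\,dx$ close to $v$, and the perturbative degree argument from Lemma~\ref{lem:zero-period} to hit $v$ exactly. The difference lies in the middle step. The paper works in the combined picture $\sigma = h' + \imath g \colon \r \to \Agot^*$ and invokes \cite[Lemma~7.3]{AF2} (the convex hull of $\Agot$ is all of $\c^3$) to produce, on an auxiliary segment $L$ disjoint from $I$ and the normalized segment $J$, a modification $\sigma_1$ with $\int_0^1 \sigma_1 \approx \imath v$; it then recovers $h_1$ by integrating $\Re\sigma_1$ and sets $g_1 = \Im\sigma_1$. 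Your approach instead builds $h_1$ and $g_1$ by hand: insert a segment where $(g,h')$ are prescribed constants to realize the bulk of the shift, then use rapid spinning on the transitions to kill their contribution. Both are valid; the paper's route is cleaner because the convex-hull lemma handles the real and imaginary constraints simultaneously in one stroke, while yours is more elementary but requires more explicit bookkeeping (returning $h$ to the correct endpoint values, controlling transition integrals).

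One point you pass over that the paper addresses explicitly: after constructing the target $g_1$, you must connect it to $g_0$ through sections of the evolving circle bundle. Since $g_0$ and $g_1$ agree outside $J$ and the bundle over the interval $J$ is trivial, this reduces to matching a winding number; your spinning freedom lets you adjust that number, but you should say so. The paper handles the analogous issue by noting that $g_1$ and $\Im\sigma_1$ may lie in different homotopy classes of sections and then re-choosing $\Im\sigma_1$ (using the last sentence of Lemma~\ref{lem:zero-period}) to land in the correct class. Apart from this, and the minor notational slip of calling $(g^*,h^*)$ ``constants'' when you mean $g$ and $h'$ are constant, your argument goes through.
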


\begin{proof}
Choose a pair of nontrivial closed intervals $J,L\subset [0,1]$ such that
the intervals $I,J,L\subset [0,1]$ are pairwise disjoint. We may assume 
that $J=[0,3\delta]$ for a small $\delta>0$. (As before, we consider these intervals
as arcs in the circle $\r/\z=\mathbb{S}^1$.) We explain the individual moves without 
changing the notation at every step.

We begin by deforming the pair $(g_0,h_0)$,  keeping it fixed on the segment $I$, 
such that for $x\in J$ we have  $h'_0(x)=\be_1$ and $g_0(x)=\be_2$. 
Consider the $1$-periodic map 
\[
	\sigma_0:= h'_0+\imath g_0 \colon \r\to \Agot^*=\Agot\setminus \{0\}.
\] 
Set 
\[
	w=\int_{[0,1]\setminus L} \sigma_0(x)dx \in \c^3.
\]
By \cite[Lemma 7.3]{AF2}  there is a smooth $1$-periodic map 
$\sigma_1\colon \r\to \Agot^*$ which agrees with $\sigma_0$ on $[0,1]\setminus L$ 
such that $\int_L \sigma_1(x)dx \in \c^3$ is arbitrarily close to $\imath v-w \in\c^3$, and 
hence $\int_0^1\sigma_1(x)dx$ is close to $\imath v$. 
(The main point of the proof is that $\Agot$ is connected and its convex hull  equals $\c^3$.)
By general position we may assume that $\Re\sigma_1$ does not assume 
the value $0\in \r^3$. By another small correction of $\sigma_1$ on $L$ we may also arrange that 
$\int_0^1 \Re \sigma_1(x)dx =0$, while $\sigma_1$ still assumes values
in $\Agot^*$ and $\int_0^1 \Im \sigma_1(x)dx$
remains close to $v$. Fix a point $x_0\in I$ and set 
\[
	h_1(x)=h_0(x_0)+ \int_{x_0}^x \Re \sigma_1(s) ds, \quad x\in\r.
\]
Then $h_1\in \Igot$ is a smooth $1$-periodic immersion which agrees with $h_0$
on $[0,1]\setminus L$ and satisfies $h'_1=\Re\sigma_1$.
By Lemma \ref{lem:immersions} we can connect $h_0$ to $h_1$ by a smooth isotopy 
of immersions $h_t\in \Igot$ such that $h_t(x)=h_0(x)$ for every  $t\in [0,1]$ and $x\in [0,1]\setminus L$. 

By the argument given in Remark \ref{rem:bundleE} we can cover the homotopy 
$h'_t\colon \r\to\r^3\setminus\{0\}$ by a smooth homotopy of $1$-periodic maps 
$g_t\colon\r\to \r^3$ such that $g_0$ is the given initial map,
the homotopy is fixed on $[0,1]\setminus L$, and $(g_t,h_t)$ satisfies 
Condition  (\ref{eq:conformal2}) for every $t\in [0,1]$. The maps $g_1$ and $\Im\sigma_1$
are sections of the circle bundle $E_1=(h'_1)^*E\to \mathbb{S}^1$, but they need not be homotopic.
To correct this, we replace $\Im\sigma_1$ by another section of the same bundle 
whose period is still close to $v$ and which is homotopic to $g_1$
(see the last sentence in Lemma \ref{lem:zero-period}). 
It is then possible to choose the homotopy $g_t$ as above connecting $g_0$ to $g_1=\Im \sigma_1$. 
The homotopy of smooth $1$-periodic maps $\sigma_t=h'_t+\imath g_t\colon \r\to \Agot^*$
($t\in [0,1]$) satisfies all the required properties, except that the period $\int_0^1 g_1(x)dx \in\r^3$
is only close to $v \in \r^3$. It remains to make this period  exactly equal to $v$
by another small deformation of the pair $(g_1,h_1)$ that is fixed on the segment $I$. 
This can be achieved  by the perturbation device on the segment 
$J=[0,3\delta]$ described in the proof of Lemma \ref{lem:zero-period}.
\end{proof}

We shall also need a period perturbation lemma for (finite unions of)
loops in the null quadric $\Agot$ (see \ref{eq:Agot})). Let $\Lcal=\Cscr^\infty(\mathbb{S}^1,\Agot^*)$ denote 
the space of smooth loops in $\Agot^*=\Agot\setminus\{0\}$.
We may think of $\sigma\in \Lcal$ as a smooth $1$-periodic map $\r\to\Agot^*$;
in particular, we write 
\[
	\int_{\mathbb{S}^1}\sigma=\int_0^1\sigma(x)dx \in\c^3.
\]
We identify the tangent space $T_z\Agot\subset T_z\c^3$ at a point 
$z\in \Agot^*$ with a complex $2$-dimensional subspace of $\c^3$. 

%
%
\begin{definition}\label{def:nondegenerate}
A loop $\sigma\in \Lcal$ is said to be {\em nondegenerate} on a segment $I\subset \mathbb{S}^1$
if the family of tangent spaces $\{T_{\sigma(x)}\Agot : x\in I\}$ spans $\c^3$.
\end{definition}

Since $\Agot$ is a complex cone, the tangent space $T_z \Agot$ at any point 
$0\ne z\in \Agot$  is spanned by $z$ together with one more vector, 
and $T_z \Agot =T_w \Agot$ for any point $w=\lambda z$ with $\lambda\ne 0$.
It follows that a loop $\sigma\in \Lcal$ is nondegenerate on the segment $I\subset \mathbb{S}^1$
if and only if the image $\sigma(I)$ is not contained in any ray $\c\cdotp \nu \subset\Agot$ of the 
null quadric. 

A continuous map $\sigma\colon W\to \Lcal$ from a complex manifold $W$ to the loop space $\Lcal$ is 
naturally identified with a continuous map $\sigma\colon W \times \mathbb{S}^1\to \c^3$. 
Such a map is said to be holomorphic if for every $x\in \mathbb{S}^1$ the map 
$\sigma(\cdotp,x)\colon W\to \c^3$ is holomorphic; in such case we shall also say
that the family $\sigma_w=\sigma(w,\cdotp)\in \Lcal$ is holomorphic in $w\in W$.

Consider the period map $\Pcal\colon\Lcal\to \c^3$ defined by
\[
	\Pcal(\sigma) = \int_0^1 \sigma(x)\, dx \in \c^3,\quad \sigma\in \Lcal.
\]

%
%
%
%
\begin{lemma}[Period dominating sprays of loops] \label{lem:deform}
Let $I\subset \mathbb{S}^1$ be a nontrivial segment.
If a loop $\sigma\in \Lcal=\Cscr^\infty(\mathbb{S}^1,\Agot^*)$ is nondegenerate on $I$ (Def.\ \ref{def:nondegenerate}) then
there is a holomorphic family of loops $\{\sigma_w\}_{w\in W} \in \Lcal$, 
where $W \subset \c^3$ is a ball centered at $0\in \c^3$, such that $\sigma_0=\sigma$,
$\sigma_w(x)=\sigma(x)$ for all $x\in \mathbb{S}^1\setminus I$ and $w\in W$, and 
\[
	\frac{\partial \Pcal(\sigma_w)}{\partial w} \bigg|_{w=0} :T_0\c^3\cong \c^3\longrightarrow \c^3
	\quad \text{is an isomorphism}.
\]	
More generally, given a family of loops $\sigma_q\in \Lcal$
depending continuously on a parameter $q$ in a compact Hausdorff space $Q$ 
such that $\sigma_q$ is nondegenerate on the segment $I$ for every $q\in Q$, 
there is a ball $0\in W\subset \c^N$ for some $N\in\n$ and a 
continuous family of loops $\sigma_{q,w}\in \Lcal$ $(q\in Q,\ w\in W)$, 
depending holomorphically on $w\in W$, such that 
$\sigma_{q,0}=\sigma_q$ for all $q\in Q$, $\sigma_{q,w}=\sigma_q$ on $\mathbb{S}^1\setminus I$ 
for all $q\in Q$ and $w\in W$, and 
\begin{equation}\label{eq:subm}
	\frac{\partial \Pcal(\sigma_{q,w})}{\partial w} \bigg|_{w=0}\colon T_0\c^N \cong \c^N\longrightarrow  \c^3
	\quad \text{is surjective for every}\ q\in Q.
\end{equation}
\end{lemma}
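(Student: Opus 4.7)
The plan is to build the spray $\sigma_w$ by composing the flows of a few linear holomorphic vector fields on $\c^3$ tangent to $\Agot$, with their time parameters cut off by smooth bumps on disjoint subarcs of $I$. The natural reservoir of such vector fields is the Lie algebra of the complex orthogonal group $O(3,\c)$ together with the radial field $\sum z_i\partial_{z_i}$: these generate linear holomorphic fields on $\c^3$ that are tangent to $\Agot$, complete (they vanish at the origin), and induce holomorphic automorphisms $\Phi^V_t$ of $\Agot^*$ for every $t\in\c$; because $O(3,\c)\times\c^*$ acts transitively on $\Agot^*$, at every $z\in\Agot^*$ their values span $T_z\Agot$. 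Fix such a finite family $V_1,\dots,V_m$.

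Using nondegeneracy, the vectors $\{V_k(\sigma(x)):x\in I,\ 1\le k\le m\}$ span $\c^3$; hence I can pick three distinct points $x_j\in I$ and indices $k_j$ $(j=1,2,3)$ such that the three vectors $V_{k_j}(\sigma(x_j))$ form a basis of $\c^3$ (first span $T_{\sigma(x_1)}\Agot$ with two of them at $x_1$, then use continuity to place $x_2$ near $x_1$ and pick $x_3\in I$ with $T_{\sigma(x_3)}\Agot\ne T_{\sigma(x_1)}\Agot$ together with a third vector escaping $T_{\sigma(x_1)}\Agot$). Choose pairwise disjoint closed arcs $I_j\subset I$ containing $x_j$ and smooth bumps $\phi_j\ge 0$ supported in $I_j$ with $\int_{\mathbb{S}^1}\phi_j\,dx=1$, concentrated at $x_j$. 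For $w=(w_1,w_2,w_3)$ in a small ball $W\subset\c^3$ about the origin, set
\[
\sigma_w(x)=\Phi^{V_{k_3}}_{w_3\phi_3(x)}\circ\Phi^{V_{k_2}}_{w_2\phi_2(x)}\circ\Phi^{V_{k_1}}_{w_1\phi_1(x)}\bigl(\sigma(x)\bigr).
\]
Since the flows preserve $\Agot^*$ and are holomorphic in their time variable, $\sigma_w\in\Lcal$ and $w\mapsto\sigma_w$ is holomorphic; by the support of the $\phi_j$ it agrees with $\sigma$ off $I$ and at $w=0$. Differentiating at $w=0$ yields
\[
\frac{\partial\Pcal(\sigma_w)}{\partial w_j}\bigg|_{w=0}=\int_{I_j}\phi_j(x)V_{k_j}(\sigma(x))\,dx,
\]
which approaches $V_{k_j}(\sigma(x_j))$ as $\phi_j$ concentrates at $x_j$; for sufficiently narrow bumps the three columns remain a basis of $\c^3$, so the period differential is an isomorphism.

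For the parametric case, at each $q_0\in Q$ the procedure above provides a spray whose period differential at $w=0$ is an isomorphism, and by continuity the same arcs, bumps, and indices work on an open neighborhood $U_{q_0}\subset Q$. By compactness of $Q$ extract a finite subcover $\{U_{q_\ell}\}_{\ell=1}^L$ with a subordinate continuous partition of unity $\{\chi_\ell\}$. Let $N=3L$, $w=(w^{(1)},\dots,w^{(L)})\in\c^N$, and define $\sigma_{q,w}$ by iteratively composing the $L$ local sprays with their time parameters scaled by $\chi_\ell(q)$; this is continuous in $q$, holomorphic in $w$, and coincides with $\sigma_q$ on $\mathbb{S}^1\setminus I$ and at $w=0$. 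At any $q\in Q$ some $\chi_\ell(q)>0$, so the corresponding $w^{(\ell)}$-block of the period differential at $w=0$ is a positive multiple of an isomorphism onto $\c^3$, which makes the total differential surjective and gives \eqref{eq:subm}. The main obstacle I anticipate is the selection step: extracting three vectors $V_{k_j}(\sigma(x_j))$ at three distinct points of $I$ that span $\c^3$ from the weaker hypothesis that the $2$-planes $T_{\sigma(x)}\Agot$ merely jointly span $\c^3$, and propagating this selection continuously in $q$ through the partition-of-unity assembly.
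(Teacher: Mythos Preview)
Your proof is correct and follows essentially the same route as the paper: compose flows of holomorphic vector fields tangent to $\Agot$ with bump-function time parameters supported in $I$, then verify period domination by differentiating at $w=0$. Your treatment is in fact more explicit than the paper's in two respects: you name a concrete source of vector fields (the Lie algebra of $O(3,\c)$ plus the radial field), and for the parametric case you spell out the finite-cover-plus-partition-of-unity assembly, whereas the paper simply remarks that the same proof goes through ``by using compositions of flows of finitely many holomorphic vector fields tangential to $\Agot$.'' The selection step you flag as an obstacle is not a real difficulty: your own sketch (two vectors at $x_1$ spanning $T_{\sigma(x_1)}\Agot$, perturb one to a nearby $x_2$ by continuity, then pick $x_3$ where the tangent plane differs) already resolves it, and in the parametric case you do not need a continuous selection---the partition of unity handles the patching.
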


\begin{definition}\label{def:spray}
A family  of loops $\{\sigma_w\}_{w\in W}$ satisfying the domination condition (\ref{eq:subm}) is 
called a  holomorphic period dominating  spray of loops.  
\end{definition}

For a general notion of a (local) dominating holomorphic spray see for instance \cite[Definition 4.1]{DF2007} or 
\cite[Def. 5.9.1]{F2011}. Period dominating sprays  were first constructed in \cite[Lemma 5.1]{AF2}.

\begin{proof}
The main idea is contained in the proof  of \cite[Lemma 5.1]{AF2}; 
we outline the main idea for the sake of readability. Since $\sigma$ is nondegenerate on  $I$, 
there are points $x_1,x_2,x_3\in \mathring I$ and holomorphic vector 
fields $V_1,V_2,V_3$ on $\c^3$ which are tangential to the quadric $\Agot$ (\ref{eq:Agot})
such that the vectors $V_j(\sigma(x_j))\in\c^3$ for $j=1,2,3$ 
are a complex basis of $\c^3$. Choose 
a smooth function $h_j\colon \mathbb{S}^1\to \c$ supported on a short segment $I_j\subset I$ 
around the point $x_j$ for $j=1,2,3$.  Let $\phi^j_t$ denote the flow of the vector field $V_j$ 
for time $t\in \c$ near $0$. It is easily seen that for suitable choices
of the functions $h_j$ the holomorphic spray of loops $\sigma_w\in \Lcal$, given
for any $w=(w_1,w_2,w_3)\in \c^3$ sufficiently close to the origin by
\[
		\sigma_w(x) = \phi^1_{w_1 h_1(x)}\circ \phi^2_{w_2 h_2(x)} \circ \phi^3_{w_3 h_3(x)} \sigma(x) \in \c^3,
		 \quad x\in \mathbb{S}^1,
\]
enjoys the stated properties; in particular, it is period dominating.
The same proof applies to a continuous family  of loops $\{\sigma_q\}_{q\in Q}\subset\Lcal$
by using compositions of flows of finitely many holomorphic vector fields tangential to $\Agot$. 
\end{proof}

%
%
%
%
\begin{remark}\label{rem:deform}
Lemma \ref{lem:deform} also applies to a finite union 
$C=\bigcup_{j=1}^l C_j$ where each $C_j\cong \mathbb{S}^1$ 
is an embedded oriented analytic Jordan curve in a Riemann surface. Let $\sigma\colon C\to \Agot^*$
be a smooth map. Given pairwise disjoint segments 
$I_j\subset C_j\setminus \bigcup_{i\ne j} C_i$ $(j=1,\ldots,l)$ 
such that $\sigma$ is nondegenerate on every $I_j$ in the sense of 
Def.\ \ref{def:nondegenerate}, the same proof furnishes a 
holomorphic family $\sigma_w\in \Cscr^\infty(C,\Agot^*)$
for $w$ in a ball $0\in W\subset  \c^{3l}$ such that 
$\sigma_0=\sigma$, $\sigma_w$ agrees with $\sigma$ on  $C\setminus \bigcup_{j=1}^l I_j$
for every  $w\in W$, and the differential at $w=0$ of the map 
$P=(P_1,\ldots,P_l) \colon W\to (\c^3)^l$ with the components 
\[
	P_j(w)=\int_{C_j} \sigma_w\in \c^3
\]
is an isomorphism. The analogous result also holds for 
homotopies of maps $\sigma_q\in \Cscr^\infty(C,\Agot^*)$
with the parameter $q\in Q$ in a compact Hausdorff space.
In the present paper we shall use it for 1-parameter homotopies, with the parameter $q=t\in [0,1]$.
\qed \end{remark}

%
%
%
%
\begin{remark}\label{rem:asympt-holo}
Results in this section apply to real analytic Jordan curves in Riemann surfaces via 
a holomorphic change of coordinates along the curve, mapping the curve onto the circle $\mathbb{S}^1\subset\c$. 
However, one can also use them for smooth curves by applying an asymptotically holomorphic  change of coordinates 
(i.e., one whose $\dibar$-derivative vanishes along the curve),
mapping the curve onto the  circle $\mathbb{S}^1$. Such changes of coordinates,
which always exist along  a smooth curve, respect the conditions (\ref{eq:Taylor}) and
\eqref{eq:conformal2} which concern first order jets. 
However, real analytic curves suffice for the applications in this paper.
\qed \end{remark}


\section{Proof of Theorem \ref{th:main} for surfaces with finite topology} \label{sec:proof1}
In this section we prove Theorem \ref{th:main} for Riemann surfaces $M$ with finitely generated first 
homology group $H_1(M;\z)\cong\z^l$, $l\in\n$. 

Without loss of generality we may assume that $M$ is connected and 
the immersion $u$ in Theorem \ref{th:main} is nonflat, $u \in \CMI_*(M)$
(cf.\ Remark \ref{rem:flat}).  Set $u_0:=u$.

Fix $p_*\in M$. There exist embedded, closed, oriented, real analytic curves
$C_1,\ldots,C_l$ in $M$ such that $C_i\cap C_j=\{p_*\}$ when $i\ne j$, the 
homology classes $[C_j]\in H_1(M;\z)$ are a basis of the first homology group $H_1(M;\z)$, 
$M$ retracts onto $C=\bigcup_{j=1}^{l} C_j$, and $C$ is $\Oscr(M)$-convex.
Along each curve $C_j$ we choose a uniformizing holomorphic coordinate $\zeta_j =x+\imath y$
(see Sect.\ \ref{sec:lemma}). In this coordinate we can represent the differential $2\di u_0$ 
on $C_j$ by a pair of smooth $1$-periodic maps $g_{j,0},h_{j,0}\colon\r \to\r^3$ such that 
\[
	\sigma_{j,0} :=h'_{j,0} +\imath g_{j,0}\colon\r\to\Agot^*
	\quad\text{and}\quad   2\di u_0 = \sigma_{j,0}d\zeta_j\ \ \text{on}\ C_j.
\]
Lemma \ref{lem:periods} furnishes smooth homotopies of $1$-periodic maps $h_{j,t},g_{j,t}\colon\r\to\r^3$
that are fixed near the intersection point $C_i\cap C_j=\{p_*\}$ such that
\begin{equation}\label{eq:sigma}
	\sigma_{j,t}=h'_{j,t} +\imath g_{j,t}:\r\to\Agot^*
\end{equation}
holds for $j=1,\ldots, l$ and $t\in [0,1]$,  and at $t=1$ we also have that
\begin{equation}\label{eq:g1}
	\int_0^1 g_{j,1}(x)dx=0, \qquad j=1,\ldots, l.
\end{equation}
Furthermore, given nontrivial arcs $I_j\subset C_j\setminus \{p_*\}$,
we can choose $\sigma_{j,t}$ to be nondegenerate on $I_j$
(see Def.\ \ref{def:nondegenerate}) for all $t\in [0,1]$ and all $j=1,\ldots,l$.

Fix a nowhere vanishing holomorphic $1$-form $\theta$ on $M$ (such $\theta$ exists by the 
Oka-Grauert principle, see Sect.\ \ref{sec:prelim}). We then have that $2\di u_0 = \sigma_0 \theta$,
where $\sigma_0=2\di u_0/\theta\colon M\to \Agot^*$ is a holomorphic map. 
Let $\sigma_t\colon C=\bigcup_{i=1}^{l}C_i \to \Agot^*$ be the smooth map determined by the equations
\begin{equation}\label{eq:sigmat}
	\sigma_t \,\theta|_{C_j} = \sigma_{j,t} d\zeta_j =\left(h'_{j,t} +\imath g_{j,t}\right) d\zeta_j,
	\quad j=1,\ldots,l;\ \ t\in [0,1].
\end{equation}
Here $\sigma_{j,t}$ is given by (\ref{eq:sigma}).
Note that $\sigma_t\in \Ccal^\infty(C,\Agot^*)$ depends smoothly on $t\in [0,1]$.  

Let $\Pcal\colon\Ccal^\infty(C,\Agot^*)\to (\c^3)^l$ denote the period map
which associates to any map $\sigma \in\Ccal^\infty(C,\Agot^*)$ 
the vector $(\Pcal_j(\sigma))_{j=1}^{l}$ with the components
\begin{equation}\label{eq:periodmap}
	\Pcal_j(\sigma) = \int_{C_j} \sigma\theta = 
	\int_0^1 \sigma_j(x) dx \in\c^3,\quad j=1,\ldots,l.
\end{equation}
The $1$-periodic map $\sigma_j\colon \r\to\r^3$ is just the map $\sigma$ 
expressed in the uniformizing coordinate $\zeta_j=x_j+\imath y_j$ along $C_j=\{y_j=0\}$, 
that is, $\sigma\theta = \sigma_j d\zeta_j$ on $C_j$. 
If $\sigma$ corresponds to the differential $2\di u$ 
of some $u\in \CMI(M)$, in the sense that $2\di u=\sigma\theta$ holds on $C$, 
then the real periods of $\sigma$ vanish, $\Re \Pcal(\sigma)=0$,  
while the imaginary periods $\Im \Pcal(\sigma)$ are the flux of $u$:
\[
	\Im \Pcal_j(\sigma) = \Flux_u(C_j)= \int_{C_j} d^c u = \int_0^1 \Im \sigma_j(x)dx \in\r^3,\quad j=1,\ldots,l.
\]

Since the map $\sigma_{j,t}$ is nondegenerate on the segment $I_j\subset C_j$ 
for any $t\in [0,1]$ and $j=1,\ldots,l$, Lemma \ref{lem:deform} and Remark \ref{rem:deform}
furnish a spray of maps $\sigma_{t,w}  \in \Ccal^\infty(C,\Agot^*)$
$(t\in [0,1])$, depending holomorphically on a complex parameter $w$ 
in a ball $W\subset \c^N$ around the origin $0\in \c^N$ for some big $N$, such that
\begin{itemize}
\item $\sigma_{t,0}=\sigma_{t}$ for all $t\in [0,1]$, and 
\item the map $P=(P_1,\ldots P_l) \colon[0,1]\times W \to (\c^3)^l$  with the components 
\begin{equation}\label{eq:P}
	[0,1]\times W \ni (t,w) \longmapsto P_j(t,w) = \int_{C_j} \sigma_{t,w}\theta =
	\int_0^1 \sigma_{j,t,w} (x)dx \in\c^3
\end{equation}
is submersive with respect to the variable $w$ at $w=0$, i.e., the partial differential
\[
	\di_w P(t,w)|_{w=0}\colon\c^N\to (\c^3)^{l}
\]
is surjective  for every $t\in [0,1]$.
\end{itemize}

In the sequel we shall frequently use that $\Agot^*$ is an Oka manifold 
(see \cite[Proposition 4.5]{AF2}), and hence maps $M\to \Agot^*$  from any Stein manifold $M$ 
(in particular, from an open Riemann surface) to $\Agot^*$ satisfy the Runge and the Mergelyan 
approximation theorems  in the absence of topological obstructions.  
The Runge approximation theorem in this setting amounts to the (basic or parametric)
{\em Oka property with approximation for holomorphic maps to Oka manifolds}; 
see \cite[Theorem 5.4.4]{F2011}. 
(An introductory survey of Oka theory can be found in \cite{F2013}.)  
The global Mergelyan approximation theorem 
on suitable subsets of the source Stein manifold follows by combining the local
Mergelyan theorem, which holds for an arbitrary target manifold
(see \cite[Theorem 3.7.2]{F2011}), and the Oka property.

In the case at hand, the  Riemann surface $M$ retracts onto the union of curves
\[
	C=\bigcup_{j=1}^l C_j. 
\]
Since the ball $W\subset\c^N$ is contractible, 
we infer that any continuous map $[0,1]\times W\times C \to \Agot^*$ extends 
to a continuous map  
\[
	[0,1]\times W\times M\to \Agot^*.
\]
Applying the parametric version of Mergelyan's theorem we can approximate the family of maps
$\sigma_{t,w}\colon C\to \Agot^*$ arbitrarily closely in the smooth topology
by a family of holomorphic maps $f_{t,w}\colon M \to \Agot^*$ 
depending holomorphically on $w\in W$ and smoothly on $t\in [0,1]$. 
(The ball $W$ is allowed to shrink around $0\in\c^N$.)
Furthermore, as the initial map $\sigma_{0,0}=2\di u_0/\theta$ is holomorphic
on $M$, the family $\{f_{t,w}\}$ can be chosen such that $f_{0,0}=\sigma_{0,0}$ on $M$.
If the approximation of $\sigma_{t,w}$ by $f_{t,w}$  is close enough for every $t\in[0,1]$ 
and $w\in W$,  it follows from submersivity of the map (\ref{eq:P})  
and the implicit function theorem that there is a smooth map $w=w(t)\in W$ 
$(t\in  [0,1])$ close to $0$ such that $w(0)=0$ and the homotopy of holomorphic maps 
$f_{t,w(t)}\colon M\to \Agot^*$ satisfies
\begin{equation}\label{eq:ftheta}
		\int_{C_j} f_{t,w(t)} \theta = P_j(t,0), \qquad j=1,\ldots,l,\ \ t\in [0,1].
\end{equation}
(Here $P_j$ is defined by (\ref{eq:P}).) By (\ref{eq:sigma}) and (\ref{eq:P}) we have that
\[
	\Re P_j(t,0) = \Re  \int_{C_j} \sigma_{t,0}\theta = 
	\int_0^1  h'_{j,t}(x)dx = 0,\quad j=1,\ldots,l,\ \ t\in [0,1].
\]
Hence it follows from (\ref{eq:ftheta}) that the real part of the holomorphic 
$1$-form $f_{t,w(t)} \theta$   integrates to a  conformal minimal immersion $u_t\in \CMI(M)$ given by
\begin{equation}\label{eq:utp}
	u_t(p) = u_0(p_*) + \int_{p_*}^p \Re( f_{t,w(t)} \theta ),\qquad p\in M,\ \ t\in[0,1].
\end{equation}
For $t=0$ we get the initial immersion $u=u_0$ since $\Re(f_{0,0}\theta)= \Re(2\di u_0) = du_0$. 
In view of (\ref{eq:g1}) we also have $P(1,0)=0$, i.e., the holomorphic 1-form
$f_{1,w(1)} \theta$  with values in $\Agot^*$ has vanishing periods along the curves $C_1,\ldots,C_l$. 
It follows that $u_1$ is the real part of the null holomorphic immersion $F\colon M\to\c^3$ defined by
\[
	F(p) = u_0(p_*) + \int_{p_*}^p f_{1,w(1)} \theta, \qquad p\in M.
\]
This completes the proof of Theorem \ref{th:main} when $H_1(M;\z)$  if finitely generated.

If $M$ is a compact Riemann surface with smooth boundary then the above proof also gives the following
analogue of Theorem \ref{th:main} for conformal minimal immersions $M\to\r^3$ that are 
smooth up to the boundary. 
%
%
%
%
\begin{theorem}\label{th:main-bordered}
Let $M$ be a compact bordered Riemann surface with nonempty smooth boundary $bM$ and let $r\ge 1$. 
For every conformal minimal immersion $u\colon M\to \r^3$ of class $\Cscr^r(M)$ 
there exists a smooth isotopy $u_t\colon M\to\r^3$ $(t\in [0,1])$ of conformal minimal immersions 
of class $\Cscr^r(M)$ such that $u_0=u$ and $u_1=\Re F$  is the real part of a 
holomorphic null curve $F\colon M\to \c^3$ which is smooth up to the boundary.
\end{theorem}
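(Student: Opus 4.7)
The plan is to repeat essentially verbatim the argument of Section \ref{sec:proof1}, with the only substantive change being the approximation theorem invoked at the end. Since $M$ is a compact bordered Riemann surface of genus $g$ with $m\ge 1$ boundary components, its first homology group $H_1(M;\z)\cong\z^{2g+m-1}$ is automatically finitely generated. I would begin by selecting embedded, oriented, real analytic Jordan curves $C_1,\ldots,C_l\subset\mathring{M}$ (with $l=2g+m-1$) meeting pairwise only at a common interior point $p_*$, whose classes form a basis of $H_1(M;\z)$ and such that $M$ strongly deformation retracts onto $C=\bigcup_{j=1}^l C_j$, with $C$ a Runge subset of $M$. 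Since $M$ is compact bordered, it embeds as a relatively compact domain in a slightly larger open Riemann surface $\tilde M$; fix a nowhere vanishing holomorphic $1$-form $\theta$ on $\tilde M$.

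Next, I would mimic Section \ref{sec:proof1} on $C$. Write $2\di u=\sigma_0\,\theta$ with $\sigma_0:M\to\Agot^*$ a map which is holomorphic on $\mathring{M}$ and of class $\Cscr^{r-1}$ up to $bM$. In uniformizing coordinates along each $C_j$, represent $\sigma_0\theta|_{C_j}$ by the pair $(h'_{j,0},g_{j,0})$ of $1$-periodic maps as in (\ref{eq:Taylor})--(\ref{eq:conformal2}), and apply Lemma \ref{lem:periods} (with the target vector $v=0$) to produce smooth homotopies $(h_{j,t},g_{j,t})$ with $\int_0^1 g_{j,1}\,dx=0$, yielding a smooth homotopy $\sigma_t:C\to\Agot^*$ such that $\sigma_t\theta$ has vanishing real periods for all $t\in[0,1]$ and vanishing imaginary periods at $t=1$, arranged to be nondegenerate on prescribed subarcs $I_j\subset C_j\setminus\{p_*\}$. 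Then Lemma \ref{lem:deform} (in the form of Remark \ref{rem:deform}) produces a holomorphic period-dominating spray $\sigma_{t,w}:C\to\Agot^*$ with $\sigma_{t,0}=\sigma_t$, defined for $w$ in a ball $W\subset\c^N$.

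The crux is the final approximation step. Here I would invoke a $\Cscr^r$-Mergelyan theorem for maps from a compact bordered Riemann surface to the Oka manifold $\Agot^*$ (cf.\ \cite[Proposition 4.5]{AF2}): parametrically and uniformly in $(t,w)\in[0,1]\times W$, approximate $\sigma_{t,w}$ on $C$ by a family $f_{t,w}:M\to\Agot^*$ holomorphic on $\mathring{M}$ and of class $\Cscr^r$ up to $bM$, arranged so that $f_{0,0}=\sigma_0$. By the period-domination (\ref{eq:subm}) and the implicit function theorem there is then a smooth path $w(t)\in W$ with $w(0)=0$ such that the periods $\int_{C_j}f_{t,w(t)}\theta$ equal the desired values $P_j(t,0)$ for all $t,j$; in particular $\Re P_j(t,0)=0$ for all $t$ and $P_j(1,0)=0$. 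Integrating $\Re(f_{t,w(t)}\theta)$ from $p_*$ produces the required $\Cscr^r(M)$ isotopy $u_t$, with $u_0=u$ and $u_1=\Re F$ for the null holomorphic immersion $F:M\to\c^3$ obtained by integrating $f_{1,w(1)}\theta$, smooth up to $bM$ of class $\Cscr^r$.

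The main obstacle is the parametric $\Cscr^r$-Mergelyan approximation step with boundary control for maps to the Oka manifold $\Agot^*$: one must preserve the initial condition $f_{0,0}=\sigma_0$ while approximating on the bouquet $C$ uniformly in the two-parameter family $(t,w)$, and the output must be of class $\Cscr^r(M)$ including at $bM$. Modulo this ingredient, the rest of the proof is mechanical from Section \ref{sec:proof1}.
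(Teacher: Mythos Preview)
Your proposal is correct and follows exactly the approach indicated in the paper, which simply asserts that the proof in Section~\ref{sec:proof1} carries over to the bordered setting with $\Cscr^r$ boundary regularity. One small remark: since $u\in\Cscr^r(M)$ gives $\sigma_0=2\di u/\theta\in\Cscr^{r-1}(M)$, you only need the approximating maps $f_{t,w}$ to be of class $\Cscr^{r-1}$ up to $bM$ (not $\Cscr^r$) in order that the integrated immersions $u_t$ be of class $\Cscr^r(M)$.
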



\section{Proof of Theorem \ref{th:main}: the general case} \label{sec:proof}

For an open Riemann surface $M$ of arbitrary topological type we construct 
an isotopy of conformal minimal immersion $\{u_t\}_{t\in [0,1]}\subset \CMI(M)$ 
satisfying Theorem \ref{th:main} by an inductive procedure. 
As before, we assume that the initial immersion $u_0\in \CMI_*(M)$ is nonflat,
and all steps of the proof will be carried out through nonflat immersions.

Pick a smooth strongly subharmonic Morse exhaustion function $\rho\colon M\to\r$.
We can exhaust $M$ by an increasing sequence 
$\emptyset = M_0\subset M_1\subset\cdots\subset \bigcup_{i=0}^\infty M_i=M$
of compact smoothly bounded domains of the form $M_i=\{p\in M\colon \rho(p)\le c_i\}$,
where $c_0<c_1<c_2<\cdots$ is an increasing sequence of regular values of $\rho$
with $\lim_{i\to\infty} c_i =+\infty$. Each domain $M_i$ is a bordered Riemann surface,
possibly disconnected. We may assume that $\rho$ has at most one critical
point $p_i$ in each difference $M_{i+1}\setminus M_i$. It follows that $M_{i}$ is
$\Oscr(M)$-convex and its interior $\mathring M_i$ is Runge in $M$ for every $i\in \z_+$.

We proceed by induction. The initial step is trivial since $M_0=\emptyset$.
Assume inductively that an isotopy $u^i_t\in \CMI_*(M_i)$ $(t\in [0,1])$
satisfying the conclusion of Theorem \ref{th:main} has already been constructed 
over a neighborhood of $M_i$ for some $i\in \z_+$. 
In particular, $u^i_0$ agrees on $M_i$ with the initial immersion 
$u_0$, while $u^i_1=\Re F^i$ is the real part of a null holomorphic immersion
$F^i$ defined on a neighborhood of $M_i$. We will show that $\{u^i_t\}_{t\in [0,1]}$ 
can be approximated arbitrarily closely in the smooth topology on $[0,1]\times M_i$ by an 
isotopy  $\{u^{i+1}_t\}_{t\in [0,1]}$ satisfying the analogous properties over
a neighborhood of $M_{i+1}$. The limit 
\[
	u_t=\lim_{i\to\infty} u^i_t\in \CMI_*(M)
\]
will clearly satisfy Theorem \ref{th:main}.

Let $C_1,\ldots, C_l\subset \mathring M_i$ be closed, oriented, real analytic curves 
whose homology classes form a basis of $H_1(M_i;\z)$ and which satisfy the other 
properties as in Sect.\  \ref{sec:proof1}. Set 
\[
	C=\bigcup_{j=1}^l C_j \subset M_i
\]
and let $\Pcal\colon\Ccal^\infty(C,\Agot^*)\to (\c^3)^l$ denote the 
period map (\ref{eq:periodmap}). Fix a nowhere vanishing holomorphic 
$1$-form $\theta$ on $M$ (see Sect.\ \ref{sec:prelim}) and write
\begin{equation}\label{eq:fit}
	2\di u^i_t =f^i_t \theta\ \ \text{on}\ M_i, \qquad t\in [0,1],
\end{equation}
where $f^i_t\colon M_i\to \Agot^*$ is a holomorphic map depending smoothly on $t\in  [0,1]$. 
(We adopt the convention that a map is holomorphic on a closed
set in a complex manifold if it is holomorphic on an unspecified open
neighborhood of that set.) Note that the map $f_0=2\di u_0/\theta\colon M\to \Agot^*$ 
is defined and holomorphic on all of $M$.

We consider the following two essentially different cases.

\smallskip
(A) The noncritical case:  $\rho$ has no critical value in $[c_i,c_{i+1}]$. 
\vspace{1mm}

(B) The critical case: $\rho$ has a critical point $p_i\in \mathring M_{i+1}\setminus M_i$.
\smallskip

In case (A) there is no change of topology when passing from $M_i$ to $M_{i+1}$.
By \cite[Lemma 5.1]{AF2} (see also Lemma \ref{lem:deform} above) 
there is a spray of maps $f^i_{t,w}  \in \Oscr(M_i,\Agot^*)$
$(t\in [0,1])$, depending holomorphically on a complex parameter $w$ 
in a ball $W\subset \c^N$ around the origin $0\in \c^N$ for some big $N$, satisfying
the following two properties:
\begin{itemize}
\item $f^i_{t,0}=f^i_{t}$ for all $t\in [0,1]$, and 
\vspace{1mm}
\item the map $P=(P_1,\ldots P_l)\colon[0,1]\times W \to (\c^3)^l$  with the components 
\begin{equation}\label{eq:P2}
	[0,1]\times W \ni (t,w) \longmapsto P_j(t,w) = \int_{C_j} f^i_{t,w}\theta \in\c^3
\end{equation}
is submersive with respect to the variable $w$ at $w=0$, i.e., the partial differential
\[
	\di_w P(t,w)|_{w=0}\colon\c^N\to (\c^3)^{l}
\]
is surjective  for every $t\in [0,1]$. 
\end{itemize}
In view of (\ref{eq:fit}) we have
\[
	\Re P_j(t,0)=  \int_{C_j} \Re\left( f^i_{t,0}\theta \right) 
	= \int_{C_j} du^i_t = 0,   \quad j=1,\ldots,l,\ \ t\in [0,1]
\]
and 
\[
         P_j(1,0) = \int_{C_j}  f^i_{1,0}\theta = \int_{C_j} 2\di u^i_1 =0, \quad j=1,\ldots,l
\]
since $u^i_1=\Re F^i$ for a holomorphic null immersion $F^i\colon M_i\to \c^3$.

Since $\Agot^*$ is an Oka surface and $M_i$ is a strong deformation retract of 
$M_{i+1}$, the same argument as in Sect.\ \ref{sec:proof1} (using the Oka principle for maps to $\Agot^*$)
shows that the spray $f^i_{t,w}$ can be approximated
as closely as desired in the smooth topology on $M_i$ by a spray of holomorphic maps
$f^{i+1}_{t,w}\colon M_{i+1}\to \Agot^*$, depending holomorphically
on $w\in W$ (the ball $W$ shrinks a little) and smoothly on $t\in [0,1]$, such that
$f^{i+1}_{0,0}=f_0|_{M_i}$. 
Assuming that the approximation is close enough, the submersivity property 
of the period map $P$  (\ref{eq:P2}) furnishes a smooth map
$w=w(t)\in W$ $(t\in [0,1])$ close to $0$ such that $w(0)=0$ and we have 
for every $j=1,\ldots,l$ that
\[
	 \int_{C_j} \Re \left( f^{i+1}_{t,w(t)} \theta \right)  = 0 \quad (t\in [0,1]),
	 \qquad \int_{C_j}  f^{i+1}_{1,w(1)} \theta =0.
\]
By integrating the family of 1-forms $\Re ( f^{i+1}_{t,w(t)})$ $(t\in [0,1])$ with the
correct choices of initial values at a chosen initial point in each connected component of $M_i$
(as in (\ref{eq:utp})) we obtain a smooth family of conformal minimal immersions 
$u^{i+1}_t\in \CMI_*(M_{i+1})$ which satisfies the induction step.
This completes the discussion of the noncritical case (A).

Consider now the critical case (B), i.e., $\rho$ has a critical point 
$p_i\in \mathring M_{i+1}\setminus M_i$.
By the assumption this is the only critical point of $\rho$ on $M_{i+1}\setminus M_i$
and is a Morse point. Now $M_{i+1}$ admits a strong deformation retraction 
onto $M_i\cup E$ where $E$ is an embedded analytic arc in the complement of $M_i$,
passing through $p_i$, which is attached transversely with both endpoints to $bM_i$. 
There are two possibilities: 
\begin{itemize} 
\item[\rm (a)] $E$ is attached with both endpoints to the same connected
component of $M_i$; 
\item[\rm (b)] the endpoints of $E$ belong to different connected components of $M_i$.
\end{itemize}
Let us begin with Case (a). The arc $E$ completes inside the domain $M_i$ to a  closed  smooth
embedded curve $C_{l+1}\subset M_{i+1}$  which is a new generator of the homology group 
$H_{1}(M_{i+1};\z)$; hence the latter group is generated by the curves $C_1,\ldots, C_{l+1}$.  
By approximation we may assume that $C_{l+1}$ is real analytic.
Let $\zeta=x+\imath y$ be a uniformizing coordinate in an
open annular neighborhood $W_{l+1}\subset M$ of $C_{l+1}$
(see Sect.\ \ref{sec:prelim}). Let $f^i_t\colon U_i\to \Agot^*$ be the isotopy 
of holomorphic maps from the inductive step,  defined on an open  neighborhood $U_i$ of $M_i$ in $M$
and satisfying (\ref{eq:fit}). In analogy with (\ref{eq:sigmat}) we define the isotopy of maps 
$\sigma^i_{t}\colon C_{l+1} \cap U_i \to \Agot^*$ by the equation 
\begin{equation}\label{eq:fit-sigmat}
 	 f^i_{t}\, \theta|_{C_{l+1} \cap U_i} = \sigma_{t} \, d\zeta, \qquad t\in [0,1].
\end{equation}
For $t=0$ the same equation defines the map 
\[
	\sigma_{0}\colon C_{l+1} \to \Agot^*
\]
on all of $C_{l+1}$ since $f^i_{0}=2\di u_0/\theta\colon M\to \Agot^*$ is globally defined on $M$.
Furthermore, we have $\sigma_{0}(x)=h'_{0}(x) +\imath g_{0}(x)$ where $h_{0}(x)= u_0(x)$
is the restricted immersion $u_0|_{C_{l+1}}$ expressed in the uniformizing coordinate along $C_{l+1}$.

Applying Lemma \ref{lem:periods} to $(g_0,h_0)$ we  find an isotopy of smooth $1$-periodic maps 
\begin{equation}\label{eq:sigma-t}
	\sigma_{t}(x)=h'_{t}(x) +\imath g_{t}(x) \in \Agot^*,\quad x\in\r,\ t\in [0,1],
\end{equation}
which agrees with $\sigma_0$ at $t=0$ and satisfies the following conditions:
\begin{itemize}
\item[\rm (i)] $h_t$ is a nonflat immersion for every $t\in[0,1]$, 
\vspace{1mm}
\item[\rm (ii)]  the extended map agrees with the previously defined map on 
the segment in $[0,1]$ representing the arc $C_{l+1}\cap M_{i}= C_{l+1}\setminus E$, and 
\vspace{1mm}
\item[\rm (iii)]  $\int_0^1 g_1(x)dx=0$.
\end{itemize}
We extend the isotopy $f^i_t$ from $M_i$ to $M_i\cup E$ by the equation 
\begin{equation}\label{eq:fi-t}
	f^i_t \theta= \sigma_{t}\, d\zeta, \quad t\in[0,1].
\end{equation}
The maps $f^i_t\colon M_i\cup E\to \Agot^*$ 
are smooth (also in $t\in[0,1]$) and holomorphic on a neighborhood of $M_i$, and we have 
\begin{equation}\label{eq:periods-i}
	\int_{C_{l+1}} \Re\left(f^i_t \theta\right) = 0,\quad t\in[0,1];\qquad 
	 \int_{C_{l+1}} f^i_1 \theta = \int_0^1 g_1(x)dx=0.
\end{equation}

We now complete the induction step as in the case of surfaces with finite topology
treated in Sect.\ \ref{sec:proof1}; let us outline the main steps. 
First we apply \cite[Lemma 5.1]{AF2} to embed the isotopy $f^i_t$ into a spray  
$f^i_{t,w} \colon M_{i}\cup E\to \Agot^*$ of smooth maps which are holomorphic on $M_i$
and depend holomorphically on a parameter $w\in W\subset\c^N$ in a ball of $\c^N$ for some big $N$
such that $f^i_{0,0}=2\di u_0/\theta$ and the period map
$(t,w)\mapsto P(t,w)\in(\c^3)^{l+1}$ with the components
\[
	P_j(t,w) = \int_{C_j} f^i_{t,w}\theta  \in\c^3,\quad j=1,\ldots,l+1
\]
is submersive with respect to $w$ at $w=0$. (Compare with (\ref{eq:P}).
This also follows from the proof of Lemma \ref{lem:deform} and 
Remark \ref{rem:deform} above.) Since $\Agot^*$ is an Oka manifold,
the parametric Mergelyan approximation theorem \cite[Theorem 5.4.4]{F2011}
allows us to approximate the spray $f^i_{t,w}$ in the smooth topology on $M_i\cup E$ by a spray
of holomorphic maps $\tilde f^i_{t,w}\colon M_{i+1} \to \Agot^*$, depending 
smoothly on $t\in [0,1]$ and holomorphically on $w\in W$ 
(the ball $W$ is allowed to shrink a little) 
such that $\tilde f^i_{0,0}=f^i_{0,0}=2\di u_0/\theta$. If the approximation
is sufficiently close then the implicit function theorem furnishes
a smooth map $w\colon [0,1]\to  W\subset \c^N$ close to $0$, with $w(0)=0$, such that 
the isotopy of holomorphic maps 
\[
	f^{i+1}_t =\tilde f^{i}_{t,w(t)}\colon M_{i+1} \to  \Agot^*, \quad t\in [0,1] 
\]
satisfies the following properties:
\begin{itemize}
\item[\rm $(\alpha)$] $f^{i+1}_0=f^{i}_0=2\di u_0/\theta$ on $M_{i+1}$,
\vspace{1mm}
\item[\rm $(\beta)$] $f^{i+1}_t$ approximates $f^{i}_t$ as closely as desired in the smooth topology
on $M_i\cup E$ (uniformly in $t\in [0,1]$), 
\vspace{1mm}
\item[\rm $(\gamma)$] $\int_{C_j} \Re\left( f^{i+1}_t \theta\right) = 0$ for all $j=1,\ldots,l+1$ and $t\in [0,1]$, and
\vspace{1mm}
\item[\rm $(\delta)$] $\int_{C_j} f^{i+1}_1 \theta = 0$ for all $j=1,\ldots,l+1$.
\end{itemize}
Property ($\gamma$) ensures that the real part $\Re(f^{i+1}_t \theta)$
of the holomorphic $1$-form $f^{i+1}_t \theta$ 
integrates to a conformal minimal immersion $u^{i+1}_t\colon M_{i+1}\to \r^3$ depending smoothly
on $t\in[0,1]$.  Property ($\alpha$) shows that $u^{i+1}_0=u^i_0=u_0$ 
with correct choices of constants of integration, and $(\beta)$ implies 
that $u^{i+1}_t$ approximates $u^{i}_t$ in the smooth topology on $M_i$.
Finally, property $(\delta)$ ensures that $u^{i+1}_1=\Re F^{i+1}$, where $F^{i+1}\colon M_{i+1}\to \c^3$
is a holomorphic null curve obtained by integrating the holomorphic $1$-form $f^{i+1}_1 \theta$.
This completes the induction step in Case (a).

In Case (b), when the endpoints of the arc $E$ belong to different connected components
of the domain $M_i$, $E$ does not complete to a closed loop inside $M_i$, and
the inclusion $M_i\hookrightarrow M_{i+1}$ induces an isomorphism 
$H_1(M_i;\z)\cong H_1(M_{i+1};\z)$. Let $C_{l+1}\subset M_i\cup E$ 
be a real analytic arc containing $E$ in its relative interior.
Choose a holomorphic coordinate $\zeta$ on a neighborhood of $C_{l+1}$ in $M$ which maps 
$C_{l+1}$ into the real axis and maps $E$ onto the segment $[0,1]\subset \r\subset \c$. 
Let $f^i_t$ and $\sigma^i_t$ be determined by (\ref{eq:fit}) and (\ref{eq:fit-sigmat}), respectively.   
In the local coordinate $\zeta$, $\sigma_t$ is of the form (\ref{eq:sigma-t}) where $h_t(x)$ and $g_t(x)$ 
are defined for $x$ near the endpoints $0,1$ of $\zeta(E)=[0,1]$. 
Clearly we can extend $h_t$ and $g_t$ smoothly to $[0,1]$ such that conditions (i) and (ii)
(stated just below (\ref{eq:sigma-t})) hold. 
The map $f^i_t$ defined by (\ref{eq:fi-t}) then satisfies the first condition in (\ref{eq:periods-i}),
and the second condition is irrelevant. We now complete the inductive step
exactly  as in Case (a).


\section{h-Runge approximation theorem for conformal minimal immersions} \label{sec:h-Runge}

The proof of Theorem \ref{th:main}, given in Sections \ref{sec:proof1} and  \ref{sec:proof},
depends on the Mergelyan approximation theorem applied to period dominating 
sprays with values in $\Agot^*=\Agot\setminus\{0\}$, where $\Agot$ is the null quadric (\ref{eq:Agot}).
We now present a more conceptual approach to this problem.
Theorem \ref{th:h-Runge} below is a homotopy version of the  Runge-Mergelyan approximation theorem 
for isotopies of conformal minimal immersions, with the additional control of one component function  
which is globally defined on the Riemann surface. This will be used in Sect.\ 
\ref{sec:complete} to prove Theorems \ref{th:main2} and \ref{th:main3}.

We begin by introducing the type of sets that we shall consider for the Mergelyan approximation 
(cf.\ \cite[Def.\ 2.2]{AL1} or \cite[Def.\ 7.1]{AF2}).

%
%
%
%
\begin{definition}
\label{def:admissible}
A compact subset $S$ of an open Riemann surface $M$ is said to be {\em admissible} if $S = K\cup \Gamma$, where 
$K=\bigcup \overline D_i$ is a union of finitely many pairwise disjoint, compact, smoothly bounded domains $\overline D_i$ in $M$ and 
$\Gamma = \bigcup \gamma_j$ is a union of finitely many pairwise disjoint analytic arcs or closed curves that intersect $K$ only 
in their endpoints (or not at all), and such that their intersections with the boundary $bK$ are transverse. 
\end{definition}

An admissible subset $S\subset M$ is $\Oscr(M)$-convex  (also called {\em Runge} in $M$) if and only if the inclusion map 
$S\hookrightarrow M$ induces an injective homomorphism $H_1(S;\z)\hookrightarrow H_1(M;\z)$.

Given an admissible set $S=K\cup \Gamma\subset M$, we denote by $\Ogot(S,\Agot^*)$ the set of all 
smooth maps $S\to\Agot^*$  which are holomorphic on an unspecified open neighborhood of $K$ 
(depending on the map). We denote by $\Ogot_*(S,\Agot^*)$ the subset of $\Ogot(S,\Agot^*)$ consisting of those maps mapping no component of $K$ and no component of $\Gamma$ to a ray on $\Agot^*$.

Fix a nowhere vanishing holomorphic 1-form $\theta$ on $M$. (Such exists by the Oka-Grauert principle, see Sec.\ 
\ref{sec:prelim}; the precise choice of $\theta$ will be unimportant in the sequel.)

The following definition of a conformal minimal immersion of an admissible subset emulates 
the spirit of the concept of {\em marked immersion} \cite{AL1} and provides the natural initial objects for the Mergelyan approximation 
by conformal minimal immersions.

%
%
%
%
\begin{definition}\label{def:generalized}
Let $M$ be an open Riemann surface and let $S=K\cup \Gamma \subset M$ be an admissible subset 
(Def.\ \ref{def:admissible}). A {\em generalized conformal minimal immersion} on $S$ 
is a pair $(u,f\theta)$, where $f\in \Ogot(S,\Agot^*)$
and $u\colon S\to\r^3$ is a smooth map which is a conformal minimal immersion on an open 
neighborhood of $K$, such that 
\begin{itemize}
\item $f\theta = 2\partial u$ on an open neighborhood of $K$, and
\vspace{1mm}
\item for any smooth path $\alpha$  in $M$ parametrizing a connected component of $\Gamma$ we have
$\Re(\alpha^*(f\theta)) = \alpha^*(du)= d(u\circ\alpha)$.
%
%
\end{itemize}
A generalized conformal minimal immersion $(u,f\theta)$ is {\em nonflat} if $u$ is 
nonflat on every connected component of $K$ and also on every curve in $\Gamma$, equivalently, if $f\in\Ogot_*(S,\Agot^*)$.
\end{definition}

We denote by $\GCMI(S)$ the set of all generalized conformal minimal immersions on $S$, 
and by $\GCMI_*(S)$ the subset consisting of nonflat generalized conformal minimal immersions.
We have natural inclusions 
\[
	\CMI(S)\subset\GCMI(S),\quad \CMI_*(S)\subset\GCMI_*(S),
\]
where $\CMI(S)$ is the set of conformal minimal immersions on open neighborhoods of $S$.
If $(u,f\theta) \in \GCMI(S)$ then clearly $u|_K\in\CMI(K)$, $\int_C \Re(f\theta)=0$ 
for every closed curve $C$ on $S$, and $u(x)=u(x_0)+\int_{x_0}^x\Re(f\theta)$ for every pair of points 
$x_0,x$ in the same connected component of $S$.

We say that $(u,f\theta)\in\GCMI(S)$ {\em can be approximated in the $\Cscr^1(S)$ topology} by conformal minimal 
immersions in $\CMI(M)$ if there is a sequence $v_i\in \CMI(M)$ $(i\in\n)$ such that $v_i|_S$ converges to
$u|_S$ in the $\Cscr^1(S)$ topology and $\partial v_i|_S$ converges to 
$f\theta|_S$ in the $\Cscr^0(S)$ topology. 
(The latter condition is a consequence of the first one on $K$, but not on $\Gamma$.)

%
%
%
%
\begin{theorem}[h-Runge approximation theorem for conformal minimal immersions]\label{th:h-Runge}
Let $M$ be an open Riemann surface and let $u\in\CMI_*(M)$ be a nonflat
conformal minimal immersion $M\to\r^3$. Assume that $S=K\cup\Gamma\subset M$ is an $\Oscr(M)$-convex admissible subset (Def.\ \ref{def:admissible}) and  $(u_t,f_t\theta)\in\GCMI_*(S)$ $(t\in [0,1])$ is a smooth isotopy of nonflat generalized conformal minimal immersions on $S$ (Def.\ \ref{def:generalized}) with $u_0=u|_S$ and $f_0\theta=(2\partial u)|_S$. 
Then the family $(u_t,f_t\theta)$ can be approximated arbitrarily closely in the $\Cscr^1(S)$ topology  by a smooth family $\wt u_t\in\CMI_*(M)$ $(t\in [0,1])$ satisfying the following conditions:
\begin{enumerate}[\rm (i)] 
\item $\wt u_0=u$.
\vspace{1mm}
\item $\Flux_{\wt u_t} (C)=\int_C \Im(f_t\theta)$ for every closed curve $C\subset S$ and $t\in [0,1]$. 
(See \eqref{eq:fluxmap}.)
\vspace{1mm}
\item Assume in addition that for every $t\in [0,1]$ the third component function $u_t^3$ of $u_t$ 
extends harmonically to $M$ and the third component $f_t^3$ of $f_t$ extends holomorphically to 
$M$ (hence $2\partial u_t^3 =f_t^3\theta$ on $M$).
Then the family $\wt u_t$ can be chosen to satisfy {\rm (i)}, {\rm (ii)} and also 
\[
	\wt u_t^3=u_t^3\quad \text{for all $t\in [0,1]$}.
\]
\end{enumerate}
\end{theorem}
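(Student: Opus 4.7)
The overall strategy parallels the proofs of Theorems \ref{th:main} and \ref{th:main-bordered}: I would construct a period-dominating holomorphic spray of maps to $\Agot^*$, approximate it globally on $M$ via the parametric Runge-Mergelyan theorem for maps to the Oka manifold $\Agot^*$, and correct the periods using the implicit function theorem. Since $S$ is $\Oscr(M)$-convex and admissible, its first homology is finitely generated, so fix closed curves $C_1,\ldots,C_l\subset S$ whose classes form a basis of $H_1(S;\z)\cong H_1(M;\z)$, chosen to lie (away from a small subarc) in the union of $K$ and the analytic arcs of $\Gamma$. Let $\Pcal\colon\Cscr^\infty(C,\Agot^*)\to(\c^3)^l$ denote the period map $\Pcal_j(f)=\int_{C_j}f\theta$; note $\Pcal(f_t)=\int_{C_j}f_t\theta$ has real part zero (as $(u_t,f_t\theta)\in\GCMI(S)$) and imaginary part equal to the prescribed flux.

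Using the nonflatness hypothesis $(u_t,f_t\theta)\in\GCMI_*(S)$, the map $f_t$ is nondegenerate on short subarcs $I_j\subset C_j$ that are disjoint from the remaining curves (Def.\ \ref{def:nondegenerate}). Lemma \ref{lem:deform} together with Remark \ref{rem:deform}, applied with the parameter $t\in[0,1]$, then furnishes a continuous family of holomorphic period-dominating sprays $\{f_{t,w}\}_{w\in W}\subset\Ogot_*(S,\Agot^*)$ with $W\subset\c^N$ a ball about the origin, $f_{t,0}=f_t$, $f_{t,w}=f_t$ off $\bigcup_j I_j$, and $\partial_w\Pcal(f_{t,w})|_{w=0}\colon\c^N\to(\c^3)^l$ surjective for every $t$. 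Since $f_0=2\di u/\theta$ is already defined holomorphically on all of $M$, we may further arrange $f_{0,0}=2\di u/\theta$ on $S$.

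Because $\Agot^*$ is an Oka manifold and $S$ is $\Oscr(M)$-convex, the parametric Oka-Mergelyan theorem (cf.\ \cite[Theorem 5.4.4]{F2011}, and \cite[Theorem 3.7.2]{F2011} for the local Mergelyan part allowing analytic arcs) approximates the spray $\{f_{t,w}\}$ in $\Cscr^1(S)$ by a spray of holomorphic maps $\tilde f_{t,w}\colon M\to\Agot^*$, smooth in $t$ and holomorphic in $w$ on a slightly shrunken ball, with $\tilde f_{0,0}=2\di u/\theta$ on $M$. If the approximation is close enough, the implicit function theorem provides a smooth path $w(t)\in W$ with $w(0)=0$ and
\[
\int_{C_j}\tilde f_{t,w(t)}\theta=\int_{C_j}f_t\theta,\qquad j=1,\ldots,l,\ t\in[0,1].
\]
Vanishing of the real parts implies that $\Re(\tilde f_{t,w(t)}\theta)$ is exact on $M$ and integrates, with initial value chosen so that $\tilde u_0=u$, to a smooth family $\tilde u_t\in\CMI_*(M)$. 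The imaginary parts give (ii), and approximation of $f_{t,w(t)}\theta$ by $\tilde f_{t,w(t)}\theta$ together with the definitions yields the required $\Cscr^1(S)$ approximation of $u_t$ by $\tilde u_t$.

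For (iii) the hypothesis makes $f_t^3$ a global holomorphic function and $u_t^3$ a global harmonic function, and we need $\tilde f_t^3=f_t^3$ throughout the deformation. The key observation is that the projection $\pi\colon\Agot^*\to\c$, $(z_1,z_2,z_3)\mapsto z_3$, is a holomorphic submersion onto $\c$; its fibres are the conics $z_1^2+z_2^2=-z_3^2$, each admitting a nowhere vanishing tangent holomorphic vector field (e.g.\ $V=z_2\partial_{z_1}-z_1\partial_{z_2}$). I would redo the spray construction of Lemma \ref{lem:deform} using compositions of flows of holomorphic vector fields on $\Agot^*$ that are tangent to the fibres of $\pi$; then every $f_{t,w}$ has the same third component $f_t^3$ as $f_t$. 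Period domination is now required only in the first two components, so $2l$ complex conditions; nonflatness of $(u_t,f_t\theta)$ on every component of $S$ supplies the nondegenerate points needed to achieve it. The Mergelyan step is then applied with the fixed third component $f_t^3$ on $M$ (this is possible since $f_t^3$ already lives on $M$ and the fibration $\pi$ is Oka-compatible), and the implicit function correction matches only the first two components of the periods. This last point is where I expect the main difficulty: one must verify that a period-dominating spray really exists within the fibres of $\pi$, which is precisely where the global nonflatness of each $u_t$ on the components of $S$ must be invoked.
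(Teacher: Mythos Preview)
Your argument has a genuine gap at the step where you assert $H_1(S;\z)\cong H_1(M;\z)$. The hypothesis that $S$ is $\Oscr(M)$-convex only guarantees that the inclusion induces an \emph{injection} $H_1(S;\z)\hookrightarrow H_1(M;\z)$, not an isomorphism; $M$ may have topology (even infinitely much) not captured by $S$. Consequently, after you apply the implicit function theorem to match the periods over $C_1,\ldots,C_l\subset S$, there is no reason for $\Re(\tilde f_{t,w(t)}\theta)$ to have vanishing periods over the cycles of $M$ that do not come from $S$, and hence no reason for it to integrate to a well-defined immersion $\tilde u_t\colon M\to\r^3$. Your argument, as written, proves only the case where $S$ already contains the topology of $M$.

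The paper deals with this by an exhaustion $M_1\subset M_2\subset\cdots$ by compact bordered domains with $S$ a strong deformation retract of $M_1$. The first inductive step (from $S$ to $M_1$) is essentially what you wrote, packaged as Lemma~\ref{lem:h-Runge}. Subsequent steps extend from $M_j$ to $M_{j+1}$: in the noncritical case by the same spray/Mergelyan/implicit-function mechanism, and in the critical case by first extending the family of generalized conformal minimal immersions across the attached arc (as in Sect.~\ref{sec:proof}) before invoking the noncritical step. This is precisely what ensures that the real periods vanish over \emph{every} cycle of $M$ at each stage.

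For part (iii) your idea of building the spray from flows of vector fields on $\Agot$ tangent to the fibres of the projection to the third coordinate is on the right track, but you gloss over the singular fibre (over $0\in\c$) and the corresponding Mergelyan step ``with fixed third component''. The paper handles this by invoking the Oka principle for sections of ramified holomorphic maps with Oka fibres (cf.\ \cite{F2003}), applied to the projection $\pi_1\colon\Agot\to\c$; your phrase ``the fibration $\pi$ is Oka-compatible'' would need to be made precise along these lines.
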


In the proof of Theorem \ref{th:h-Runge} we shall need the following version of 
the h-Runge approximation property for maps into $\Agot^*$ with the control of the periods.

%
%
%
\begin{lemma}\label{lem:h-Runge}
Let $M$ be an open Riemann surface 
and let $f=(f^1,f^2,f^3)\colon M\to\Agot^*$ be a holomorphic map whose image is not contained in a ray in $\c^3$. 
Assume that $S=K\cup\Gamma\subset M$ is an $\Oscr(M)$-convex admissible subset (Def.\ \ref{def:admissible}).
Then every smooth isotopy 
\[
	f_t=(f_t^1,f_t^2,f_t^3)\in \Ogot_*(S,\Agot^*)\quad (t\in [0,1])
\]
with $f_0=f|_S$ can be approximated arbitrarily closely in $\Cscr^1(S)$ by a smooth family of 
holomorphic maps 
\[
	\wt f_t=(\wt f_t^1,\wt f_t^2,\wt f_t^3)\colon M\to\Agot^*\quad (t\in [0,1])
\]
satisfying the following conditions:
\begin{enumerate}[\rm (i)] 
\item $\wt f_0=f$.
\vspace{1mm}
\item $\int_C \wt f_t \theta = \int_C  f_t\theta$ for every closed curve $C\subset S$ and every $t\in [0,1]$.
\vspace{1mm}
\item If $f_t^3$ extends holomorphically to $M$ for all $t\in [0,1]$, then we can choose the family $\wt f_t$
such that $\wt f_t^3=f_t^3$ for all $t\in [0,1]$.
\end{enumerate}
\end{lemma}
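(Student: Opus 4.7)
The plan is to follow the template of the proof of Theorem \ref{th:main} in Sections \ref{sec:proof1}--\ref{sec:proof}, combining: (a) a period dominating holomorphic spray on $S$ coming from Lemma \ref{lem:deform} and Remark \ref{rem:deform}; (b) the parametric Oka--Weil--Mergelyan approximation theorem for maps into the Oka manifold $\Agot^*$, applied on the $\Oscr(M)$-convex admissible set $S$; and (c) the implicit function theorem applied to the period map at $w=0$ to correct the periods exactly.

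First I would choose pairwise disjoint real analytic Jordan curves $C_1,\dots,C_l\subset \mathring K$ whose classes form a basis of $H_1(S;\z)\cong H_1(M;\z)$, together with subarcs $I_j\subset C_j$ on which the whole family $\{f_t\}_{t\in[0,1]}$ is nondegenerate in the sense of Definition \ref{def:nondegenerate}. Nondegeneracy on a subarc of $C_j$ is equivalent to the image not lying in a ray of $\Agot$, and the hypothesis $f_t\in\Ogot_*(S,\Agot^*)$ together with the identity principle (since $C_j$ is contained in a connected component of $\mathring K$ on which $f_t$ is holomorphic) guarantees that this holds; a compactness argument in $t\in[0,1]$ allows a uniform choice of arcs. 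Remark \ref{rem:deform} then produces a period dominating spray $f_{t,w}\in\Ogot_*(S,\Agot^*)$, depending holomorphically on $w\in W\subset\c^N$ (a ball around $0$) and smoothly on $t$, with $f_{t,0}=f_t$, $f_{t,w}=f_t$ on $S\setminus\bigcup_jI_j$, $f_{0,w}=f|_S$ for all $w$, and
\[
\partial_wP(t,w)|_{w=0}\colon\c^N\to(\c^3)^l\ \text{surjective for every}\ t\in[0,1],
\]
where $P(t,w):=\bigl(\int_{C_j}f_{t,w}\theta\bigr)_{j=1}^l$.

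Next, since $\Agot^*$ is Oka and $W$ is contractible, the parametric Mergelyan approximation theorem for maps into Oka manifolds (\cite[Theorem 5.4.4]{F2011} combined with the local Mergelyan theorem \cite[Theorem 3.7.2]{F2011}) furnishes a smooth family of holomorphic maps $\tilde f_{t,w}\colon M\to\Agot^*$, holomorphic in $w\in W'\Subset W$ and smooth in $t$, approximating $f_{t,w}$ in $\Cscr^1(S)$ as closely as desired, with $\tilde f_{0,w}=f$ on $M$ for every $w\in W'$. For a sufficiently close approximation the submersivity of the period map persists, and the implicit function theorem then produces a smooth $w\colon[0,1]\to W'$ with $w(0)=0$ and $\int_{C_j}\tilde f_{t,w(t)}\theta=\int_{C_j}f_t\theta$ for all $j$ and $t$. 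Setting $\tilde f_t:=\tilde f_{t,w(t)}$ gives conclusions (i) and (ii).

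The main obstacle is conclusion (iii), which requires both the spray and its global extension to fix the third component. The holomorphic vector fields on $\c^3$ that are tangent to $\Agot$ and annihilate $dz_3$ restrict on $\Agot^*$ to the rank one subbundle of $T\Agot$ generated by $R:=-z_2\partial_{z_1}+z_1\partial_{z_2}$, whose flow rotates the $(z_1,z_2)$-plane and preserves $z_3$. I would replace the sprays in Lemma \ref{lem:deform} by compositions of the form $\phi^R_{w_1h_1(x)}\circ\cdots\circ\phi^R_{w_{2l}h_{2l}(x)}(f_t(x))$ with parameter $w=(w_1,\dots,w_{2l})\in\c^{2l}$ and $2l$ cut-off functions $h_i$ supported on pairwise disjoint subarcs of $\bigcup_jI_j$ (two per cycle). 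A short computation using $(f_t^1)^2+(f_t^2)^2=-(f_t^3)^2$ shows that $(f_t^1,f_t^2)|_{I_j}$ lies in a complex line through $0\in\c^2$ if and only if $f_t|_{I_j}$ lies in a ray of $\Agot$, a case excluded by $f_t\in\Ogot_*(S,\Agot^*)$; hence the restricted period map is submersive at $w=0$ onto $(\c^2)^l$, while the third-component periods match automatically since $f_{t,w}^3\equiv f_t^3$. To globalize the spray from $S$ to $M$ while preserving the third component, I would apply the parametric Oka principle for sections of the holomorphic submersion $z_3\colon\Agot^*\to\c$ pulled back by the globally defined $f_t^3$: its generic fibers are $\c^*$ (an Oka manifold), so the approximation theorem for sections of Oka maps (\cite[Chapter 6]{F2011}) applies, with the isolated zeros of $f_t^3$ handled by local analysis on the nodal fiber over $0\in\c$. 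The implicit function theorem then closes the argument as in (ii), with $\tilde f_t^3\equiv f_t^3$ preserved by construction.
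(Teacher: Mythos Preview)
Your argument for parts (i) and (ii) is essentially identical to the paper's: embed $\{f_t\}$ into a period dominating spray via Lemma~\ref{lem:deform} and Remark~\ref{rem:deform}, apply parametric Mergelyan using that $\Agot^*$ is Oka, and correct the periods by the implicit function theorem. Two small points of hygiene: a homology basis of $H_1(S;\z)$ need not consist of \emph{pairwise disjoint} curves inside $\mathring K$ (some cycles may run through $\Gamma$, and on positive genus the curves must intersect); this does not affect the argument, and nondegeneracy on arcs in $\Gamma$ is already part of the hypothesis $f_t\in\Ogot_*(S,\Agot^*)$, so you do not need the identity principle there. Also, arranging $\wt f_{0,0}=f$ and $w(0)=0$ is what gives (i); you need not (and in general cannot) have $f_{0,w}=f|_S$ for all $w$.

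For part (iii) your strategy coincides with the paper's: build the spray using flows that fix $z_3$ (your vector field $R=-z_2\partial_{z_1}+z_1\partial_{z_2}$ is exactly the right one), check period domination onto $(\c^2)^l$ using that $(f_t^1,f_t^2)|_{I_j}$ does not lie in a line through $0$, and then globalize keeping the third component fixed. The gap is in that last step. The projection $z_3\colon\Agot^*\to\c$ is a holomorphic submersion, but it is \emph{not} an Oka map in the sense needed for the standard approximation theorems in \cite[Chapter~6]{F2011}: the generic fiber is $\c^*$, but over $0$ the fiber degenerates to two disjoint copies of $\c^*$, so the map fails to be a locally trivial fiber bundle and the usual subelliptic/Oka-fibration hypotheses are not satisfied. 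Your phrase ``handled by local analysis on the nodal fiber'' is exactly where a nontrivial theorem is required. The paper invokes the \emph{Oka principle for sections of ramified holomorphic maps with Oka fibers} from \cite{F2003} (see also \cite[Sec.~6.13]{F2011}), applied to the projection $\Agot\to\c$ onto the third coordinate, following the argument in \cite[Theorem~7.7]{AF2}. Once you cite and apply that result, your proof of (iii) goes through and matches the paper's.
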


\begin{proof}[Proof of Lemma \ref{lem:h-Runge}]
An isotopy satisfying  properties (i) and (ii) is obtained by following the proof of (the special case of) Theorem \ref{th:main} in Sect.\ \ref{sec:proof1}. Here is a brief sketch.

By Lemma \ref{lem:deform} and Remark \ref{rem:deform}
we can embed the isotopy $\{f_t\}_{t\in [0,1]}$ into a holomorphic period dominating spray of smooth maps 
$f_{t,w}=(f_{t,w}^1,f_{t,w}^2,f_{t,w}^3)\colon S\to\Agot^*$. 
Here, $w$ is a parameter in a ball $W\subset\c^N$ around the origin in a 
complex Euclidean space for some big $N$, $f_{t,w}$ depends holomorphically on $w$ and smoothly
on $t\in[0,1]$, and $f_{t,0}=f_t$ for all $t$. The phrase {\em period dominating} refers to a fixed
finite set of closed loops in $S$ forming a basis of the first homology group
$H_1(S;\z)$.

Since $\Agot^*$ is an Oka manifold, we have the Mergelyan approximation property for maps from Stein manifolds (in particular, from open Riemann surfaces) to $\Agot^*$ in the absence of topological obstructions. (See the argument and the references given in Sect.\ \ref{sec:proof1} above.) In the case at hand, the map $f=f_0\colon M\to \Agot^*$ is globally defined and the domain
$[0,1]\times W$ of the spray $f_{t,w}$ is contractible, so there are no topological 
obstructions to extending these maps continuously to all of $M$. 
Applying the Mergelyan approximation theorem on $S$ we obtain a spray of holomorphic 
maps $\wt f_{t,w}\colon M\to\Agot^*$, depending holomorphically on $w$ 
(whose domain is allowed to shrink a little) and smoothly on $t\in[0,1]$, such that
$\wt f_{t,w}$ approximates $f_{t,w}$ in the $\Cscr^1$ topology on $S$, and $\wt f_{0,0} = f_0$ holds on $M$. 
Within this family  we can then pick  an isotopy $\wt f_t = \wt f_{t,w(t)}$ $(t\in[0,1])$ 
satisfying properties (i) and (ii). The smooth function $[0,1]\ni t\mapsto  w(t)\in\c^N$ 
is  chosen by the implicit function theorem such that $w(0)=0$ (which implies property {\rm (i)}), 
$w(t)$ is close to $0\in \c^N$ for all $t\in[0,1]$ (to guarantee the approximation on $S$),
and $\wt f_t$ satisfies the period conditions in property {\rm (ii)}.

It remains to show that we can also fulfill property {\rm (iii)}.
Let $C_1,\ldots,C_l$ be closed, oriented, analytic curves in $S$ whose homology classes form a basis of $H_1(S;\z)$.
Assume that $f_t^3$ extends holomorphically to $M$ for all $t\in [0,1]$. We argue as in \cite[Theorem 7.7]{AF2}. Set $\Agot'=\Agot\cap\{z_1=1\}$ and observe that $\Agot\setminus\{z_1=0\}$ is biholomorphic to $\Agot'\times\c^*$ (in particular, $\Agot'\times\c^*$ is an Oka manifold), and the projection $\pi_1\colon \Agot\to\c$ is a trivial fiber bundle with Oka fiber $\Agot'$ except over $0\in\c$
where it is ramified. We may embed the isotopy $f_t$ into a spray $f_{t,w}=(f_{t,w}^1,f_{t,w}^2,f_{t,w}^3)\colon S\to\Agot^*$ of smooth maps which are holomorphic on a neighborhood of $K$ and depend holomorphically on a parameter $w\in W\subset\c^N$ in a ball of some $\c^N$
such that $f_{0,0}=f_0$, the third component $f_{t,w}^3$ of $f_{t,w}$ equals $f_t^3$ for all $(t,w)\in [0,1]\times W$, and the period map
$(t,w)\mapsto P(t,w)\in(\c^2)^l$ with the components
\[
	P_j(t,w) = \int_{C_j} (f_{t,w}^1,f_{t,w}^2)\theta  \in\c^2,\quad j=1,\ldots,l,
\]
is submersive with respect to $w$ at $w=0$.   
Up to slightly shrinking the ball $W$, the Oka principle for sections of ramified holomorphic maps with Oka fibers (see \cite{F2003} or \cite[Sec.\ 6.13]{F2011}) 
enables us to approximate the spray $f_{t,w}$ in the smooth topology on $S$ by a spray
of holomorphic maps $\wt  f_{t,w}\colon M \to \Agot^*$, depending 
smoothly on $t\in [0,1]$ and holomorphically on $w\in W$, such that $\wt  f_{0,0}=f$ 
and the third component $\wt  f_{t,w}^3$ of $\wt  f_{t,w}$ equals $f_{t,w}^3=f_t^3$ 
for all $(t,w)\in [0,1]\times W$. If the approximation
is close enough, then the implicit function theorem furnishes
a smooth map $w\colon [0,1]\to  W\subset \c^N$ close to $0$, with $w(0)=0$, such that 
the isotopy of holomorphic maps $\wt  f_t:=\wt  f_{t,w(t)}\colon M\to  \Agot^*$
$(t\in [0,1])$ satisfies {\rm (i)}, {\rm (ii)}, and {\rm (iii)}.
For further details we refer to the proof of \cite[Theorem 7.7]{AF2}.
\end{proof}

Given a compact bordered Riemann surface $\overline R=R\cup bR$
with smooth boundary $bR$ consisting of finitely many smooth Jordan curves, we denote by $\Ascr^r(\overline R)$ 
the set of all maps $\overline R\to\c$ of class $\Cscr^r$ $(r\in\z_+)$ that are holomorphic on the interior $R$ of $\overline R$.

%
%
%
%
\begin{proof}[Proof of Theorem \ref{th:h-Runge}]
Pick a smooth strongly subharmonic Morse exhaustion function $\rho\colon M\to\r$. We exhaust $M$ by an increasing sequence 
\[
	M_1\subset M_2\subset\cdots\subset \bigcup_{j=1}^\infty M_j=M
\]
of compact smoothly bounded domains of the form 
\[
	M_j=\{p\in M\colon \rho(p)\le c_j\},
\]
where $c_1<c_2<\cdots$ is an increasing sequence of regular values of $\rho$
with $\lim_{j\to\infty} c_j =+\infty$. Since $S$ is $\Oscr(M)$-convex, we can choose $\rho$ and $c_1$ such that 
$S\subset \mathring M_1$ and $S$ is a strong deformation retract of $M_1$; 
in particular, the inclusion $S\hookrightarrow M_1$ induces an isomorphism 
\[
	H_1(S;\z)\cong H_1(M;\z)
\]
of their homology groups. 
Each domain $M_j=\mathring M_j\cup b\mathring M_j$ is a compact bordered Riemann surface,
possibly disconnected. We may assume that $\rho$ has at most one critical
point $p_j$ in each difference $M_{j+1}\setminus M_j$. It follows that $M_j$ is
$\Oscr(M)$-convex and $\mathring M_j$ is Runge in $M$ for every $j\in \n$.

We proceed by induction. In the first step we obtain an extension from $S$ to $M_1$.

Assume for simplicity that $M_1$ and so $S$ are connected; the same argument works on any connected component. Pick a point $x_0\in S$. By Lemma \ref{lem:h-Runge}, the family $f_t$ $(t\in[0,1])$ can be approximated arbitrarily closely in $\Cscr^1(S)$ by a smooth isotopy of maps 
\[
	f_{t,1}=(f_{t,1}^1,f_{t,1}^2,f_{t,1}^3)\colon M_1\to\Agot^*
\] 
of class $\Ascr^1(M_1)^3$ such that the family of conformal minimal immersions 
\[
	u_{t,1}=(u_{t,1}^1,u_{t,1}^2,u_{t,1}^3)\in \CMI_*(M_1)
\]
given by
\[
	u_{t,1}(x)=u_t(x_0)+\int_{x_0}^x \Re (f_{t,1}\theta),\quad x\in M_1,
\]
is well defined and satisfies
\begin{enumerate}[\rm (i{$_1$})]
\item $u_{0,1}=u|_{M_1}$,
\vspace{1mm}
\item $\Flux_{u_{t,1}}(C)=\int_C \Im(f_t\theta)$ for every closed curve $C\subset S$ and every $t\in [0,1]$, and
\vspace{1mm}
\item $u_{t,1}^3=u_t^3|_{M_1}$ for all $t\in [0,1]$ provided that the assumptions in Theorem \ref{th:h-Runge} {\rm (iii)} hold.
\end{enumerate}

Assume inductively that for some $j\in \n$ we have already constructed a smooth isotopy 
\[
	u_{t,j}=(u_{t,j}^1,u_{t,j}^2,u_{t,j}^3)\in \CMI_*(M_j),\quad t\in [0,1]
\] 
satisfying 
\begin{enumerate}[\rm (i{$_j$})]
\item $u_{0,j}=u|_{M_j}$,
\item $\Flux_{u_{t,j}}(C)=\int_C \Im(f_t\theta)$ for every closed curve $C\subset S$ and every $t\in [0,1]$, and
\item $u_{t,j}^3=u_t^3|_{M_j}$ for all $t\in [0,1]$ provided that the assumptions in Theorem \ref{th:h-Runge} {\rm (iii)} hold.
\end{enumerate}

Let us show that the smooth isotopy $\{u_{t,j}\}_{t\in [0,1]}$ 
can be approximated arbitrarily closely in the smooth topology on $[0,1]\times M_j$ by a smooth
isotopy  $\{u_{t,j+1}\}_{t\in [0,1]}\subset\CMI_*(M_{j+1})$ satisfying the analogous properties. The limit 
$\wt u_t=\lim_{j\to\infty} u_{t,j}\in \CMI_*(M)$ will clearly satisfy Theorem \ref{th:h-Runge}. Indeed, properties {\rm (i$_j$)}, {\rm (ii$_j$)}, and {\rm (iii$_j$)} trivially imply {\rm (i)}, {\rm (ii)}, and {\rm (iii)}, respectively.

\vspace{1mm}

{\bf The noncritical case:} Assume that $\rho$ has no critical value in $[c_j,c_{j+1}]$. In this case 
$M_j$ is a strong deformation retract of $M_{j+1}$. As above, we finish by using Lemma \ref{lem:h-Runge} applied to the family of maps $f_{t,j}\colon M_j\to\Agot^*$ $(t\in [0,1])$ given by $2\partial u_{t,j}=f_{t,j}\theta$ on $M_j$.

\vspace{1mm}

{\bf The critical case:} Assume that $\rho$ has a critical point $p_{j+1}\in M_{j+1}\setminus M_j$. By the assumptions on $\rho$, $p_{j+1}$ is the only critical point of $\rho$ on $M_{j+1}\setminus M_j$ and is a Morse point. Since $\rho$ is strongly, the Morse index of $p_{j+1}$ is either $0$ or $1$. 

\vspace{1mm}

If the Morse index of $p_{j+1}$ is $0$, then a new (simply connected) component of the sublevel set $\{\rho\leq r\}$ 
appears at $p_{j+1}$ when $r$ passes the value $\rho(p_{j+1})$. In this case 
\[
	M_{j+1}=M_{j+1}'\cup M_{j+1}''
\] 
where $M_{j+1}'\cap M_{j+1}''=\emptyset$, $M_{j+1}''$ is a simply connected component of $M_{j+1}$, 
and $M_j$ is a strong deformation retract of $M_{j+1}'$. Let $\Omega\subset M_{j+1}''$ be a smoothly bounded 
compact disc that will be specified later. It follows that $M_j\cup \Omega$ is a strong deformation retract of $M_{j+1}$. 
Extend $\{u_{t,j}=(u_{t,j}^1,u_{t,j}^2,u_{t,j}^3)\}_{t\in [0,1]}$ to $\Omega$ as any smooth isotopy of conformal minimal 
immersions such that $u_{0,j}|_\Omega=u|_\Omega$; for instance one can simply take $u_{t,j}|_\Omega=u|_\Omega$ 
for all $t\in[0,1]$. If the assumptions in Theorem \ref{th:h-Runge} {\rm (iii)} hold, then take this extension to also satisfy 
$u_{t,j}^3|_\Omega=u_t^3|_\Omega$ for all $t\in [0,1]$. For instance, one can choose $\Omega$ such that $f_t^3$ 
does not vanish anywhere on $\Omega$ for all $t\in[0,1]$, pick $x_0\in \mathring\Omega$, and take
\[
u_{t,j}(x)=y_t+\Re\int_{x_0}^x\left( \frac12\left(\frac1{g}-g\right), \frac{\imath}2\left(\frac1{g}+g\right),1\right)f_t^3\theta,\quad x\in\Omega,
\]
where $y_t=(y_t^1,y_t^2,y_t^3)\in\r^3$ depends smoothly on $t\in[0,1]$ and satisfies $y_0=u(x_0)$ and $y_t^3=u_t^3(x_0)$ 
for all $t\in [0,1]$, and $g$ is the complex Gauss map of $u$ (cf.\ \eqref{eq:gaussmap} and \eqref{eq:WR} and observe that 
$g$ is holomorphic and does not vanish anywhere on $\Omega$). This reduces the proof to the noncritical case.

If the Morse index of $p_{j+1}$ is $1$, then the change of topology of the sublevel set $\{\rho\leq r\}$ at $p_{j+1}$ is described 
by attaching to $M_j$ an analytic arc $\gamma\subset \mathring M_{j+1}\setminus M_j$.
Observe that $M_j\cup \gamma$ is an $\Oscr(M)$-convex strong deformation retract of $M_{j+1}$. Without loss of generality 
we may assume that $M_j\cup \gamma$ is admissible in the sense of Def.\ \ref{def:admissible}. Reasoning as in the critical step 
in Sec.\ \ref{sec:proof}, we extend the family $\{u_{t,j}\}_{t\in [0,1]}$ to a smooth isotopy of nonflat generalized conformal 
minimal immersions $\{(u_{t,j},f_{t,j}\theta)\}_{t\in[0,1]}\subset \GCMI_*(M_j\cup\gamma)$ such that 
\[	
	(u_{0,j}, f_{0,j}\theta)=(u,2\partial u)|_{M_j\cup\gamma}. 
\]
If the assumptions in Theorem \ref{th:h-Runge} {\rm (iii)} hold, 
then we take this extension such that their third components satisfy $u_{t,j}^3=u_t^3|_{M_j\cup\gamma}$ and 
$f_{t,j}^3\theta=(2\partial u_t^3)|_{M_j\cup\gamma}$ for all $t\in [0,1]$. Then, to construct the isotopy 
$u_{t,j+1}\in\CMI_*(M_{j+1})$ $(t\in [0,1])$ meeting {\rm (i{$_{j+1}$})}, {\rm (ii{$_{j+1}$})}, and {\rm (iii{$_{j+1}$})}, 
we reason as in the first step of the inductive process. This finishes the inductive step and proves the theorem.
\end{proof}


\section{Isotopies of complete conformal minimal immersions} \label{sec:complete}

The aim of this section is to prove Theorem \ref{th:main2}. The core of the proof is given by the following technical result.
Recall that given a compact set $K$ in an open Riemann surface $M$, we denote by $\CMI(K)$ the set of maps $K\to\r^3$ extending as conformal minimal immersions to an unspecified open neighborhood of $K$ in $M$, and by $\CMI_*(K)\subset \CMI(K)$ the subset of those immersions which are nonflat on every connected component of $K$.

%
%
%
%
\begin{lemma}\label{lem:JXt}
Let $\overline M=M\cup bM$ be a compact connected bordered Riemann surface. Let $u=(u^1,u^2,u^3)\in\CMI_*(M)$ be a 
conformal minimal immersion which is of class $\Cscr^1(\overline M)$ up to the boundary.  Let $K\subset M$ be an 
$\Oscr(M)$-convex compact set containing the topology of $M$.  Let $u_t=(u_t^1,u_t^2,u_t^3)\in \CMI_*(K)$, $t\in [0,1]$, 
be a smooth isotopy with $u_0=u|_K$.
Assume also that $u_t^3$ extends holomorphically to $M$ for all $t\in [0,1]$.
Let $x_0\in K$ and denote by $\tau$ the positive number given by
\begin{equation}\label{eq:lemmaJX}
\tau:=\dist_{u}(x_0,bM)=\inf\{ \length (u(\gamma))\colon\text{ $\gamma$ an arc in $\overline M$ connecting $x_0$ and $bM$}\}.
\end{equation}
(Here $\length(\cdot)$ denotes the Euclidean length in $\r^3$.)
Then, for any $\delta>0$, the family $u_t$ can be approximated arbitrarily closely in the smooth topology on $K$ by a family 
$\wt u_t\in \CMI_*(M)$ of class $\Cscr^1(\overline M)$, depending smoothly on $t\in [0,1]$ and enjoying the following properties:
\begin{enumerate}[\rm (I)]
\item $\wt u_0=u$.
\vspace{1mm}
\item $\wt u_t^3=u_t^3$ for all $t\in[0,1]$.
\vspace{1mm}
\item $\int_C d^c(\wt u_t-u_t)=0$ for every closed curve $C\subset K$ and every $t\in [0,1]$.
\vspace{1mm}
\item $\dist_{\wt u_t}(x_0,bM)>\tau-\delta$ for all $t\in[0,1]$.
\vspace{1mm}
\item $\dist_{\wt u_1}(x_0,bM)>1/\delta$.
\end{enumerate}
\end{lemma}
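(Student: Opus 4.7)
The plan is to combine the parametric h-Runge approximation theorem (Theorem~\ref{th:h-Runge}) with a Jorge--Xavier/Nadirashvili type labyrinth argument to enlarge the intrinsic distance stagewise. Condition~(II) forces all modifications to preserve the third Weierstrass component $f_t^3$; writing the data of our candidate as $(g_t, f_t^3\theta)$ in the Weierstrass representation \eqref{eq:metric}, the induced metric $ds_{\wt u_t}^2 = \tfrac14(|g_t|+|g_t|^{-1})^2 |f_t^3|^2 |\theta|^2$ can be made as large as desired on any chosen arc by forcing $|g_t|$ (or $|g_t|^{-1}$) to be huge there. Theorem~\ref{th:h-Runge}(iii) allows us to perform such modifications while keeping $f_t^3$, and hence $u_t^3$, unchanged, and part (ii) automatically secures the flux condition (III).

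The first step is to apply Theorem~\ref{th:h-Runge} on a slightly enlarged open Riemann surface $M'\supset\overline M$ (with $S=K$) to extend the given isotopy $\{u_t\}$ from $K$ to a smooth auxiliary family $\{v_t\}\subset\CMI_*(M)\cap\Cscr^1(\overline M)$ satisfying $v_0=u$, $v_t^3=u_t^3$, and the prescribed flux on cycles of $K$; tightening the $\Cscr^1(K)$-approximation makes $|2\di v_t|^2$ arbitrarily close to $|2\di u_t|^2$ on $K$. Next, using a strongly subharmonic Morse exhaustion of $M$ with regular values $c_0<c_1<\cdots<c_N$, I would pick nested $\Oscr(M)$-convex compacta $K\subset K_0\subset K_1\subset\cdots\subset K_N$ with $K_N$ a strong deformation retract of $\overline M$, and in each annulus $A_j=K_j\setminus\mathring K_{j-1}$ embed a Jorge--Xavier labyrinth $\Lambda_j$: a finite family of pairwise disjoint embedded real analytic arcs with endpoints on $bK_{j-1}\cup bK_j$ such that $K_{j-1}\cup\Lambda_j$ is admissible and $\Oscr(M)$-convex, and every rectifiable arc in $A_j$ connecting $bK_{j-1}$ to $bK_j$ must make a preassigned large number of transverse crossings with $\Lambda_j$.

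I would then partition $[0,1]=[s_0,s_1]\cup\cdots\cup[s_{N-1},s_N]$ and construct inductively families $\{v_t^{(j)}\}_{t\in[0,1]}\subset\CMI_*(M)\cap\Cscr^1(\overline M)$ starting from $v_t^{(0)}=v_t$. On the admissible set $S_j=K_{j-1}\cup\Lambda_j$ I would prescribe a smooth isotopy in $\GCMI_*(S_j)$ which agrees with $v_t^{(j-1)}|_{S_j}$ for $t\le s_{j-1}$, coincides with $v_t^{(j-1)}$ on $K_{j-1}$ for every $t$, and at $t=s_j$ modifies $g_t$ on each arc of $\Lambda_j$ so sharply that (with $f_t^3$ fixed) the induced arclength of each arc exceeds a preassigned large value, with smooth transition on $[s_{j-1},s_j]$. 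Theorem~\ref{th:h-Runge}(iii) then produces $v_t^{(j)}\in\CMI_*(M)\cap\Cscr^1(\overline M)$ that approximates this data in $\Cscr^1(S_j)$ and satisfies (I), (II), and the flux condition on the cycles of $S_j$. Set $\wt u_t:=v_t^{(N)}$. Property (V) then follows because at $t=1$ every rectifiable arc from $x_0$ to $bM$ is forced to cross all $N$ labyrinths, contributing total length larger than $1/\delta$ provided $N$ is chosen large enough.

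The principal obstacle is property (IV). Each of the $N$ h-Runge steps perturbs $v_t^{(j-1)}$ also on $K$, introducing a small $\Cscr^1$-error there, and a priori one could imagine the cumulative loss in the metric eating up more than $\delta$; one handles this by choosing a rapidly decreasing sequence of tolerances $\epsilon_j$ whose cumulative effect on $ds^2$ over $K$ is smaller than $\delta$. In addition, the stagewise construction only \emph{enlarges} $|g_t|+|g_t|^{-1}$ on the successively added labyrinths, so the map $t\mapsto \dist_{v_t^{(N)}}(x_0,bM)$ is essentially monotone non-decreasing in $t$. Combined with $v_0^{(N)}=u$ and hence $\dist_{\wt u_0}(x_0,bM)=\tau$, these two observations yield $\dist_{\wt u_t}(x_0,bM)>\tau-\delta$ uniformly in $t\in[0,1]$, completing the proof.
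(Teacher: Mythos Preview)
Your overall strategy---extend the isotopy to $M$ via Theorem~\ref{th:h-Runge}, then enlarge the metric through a Jorge--Xavier labyrinth combined with a L\'opez--Ros type modification of the Gauss map $g_t$ while keeping $f_t^3$ fixed---is correct and matches the paper. However, your argument for property~(IV) has a genuine gap, and the multi-stage scheme is unnecessarily complicated.

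The claim that $t\mapsto\dist_{v_t^{(N)}}(x_0,bM)$ is ``essentially monotone non-decreasing'' is not justified and is in general false. Each h-Runge step controls the new family only in $\Cscr^1(S_j)$ with $S_j=K_{j-1}\cup\Lambda_j$; on $\overline M\setminus S_j$ the metric is completely uncontrolled and may shrink. For $t$ close to~$0$ the first labyrinth $\Lambda_1$ is not yet activated, so an arc from $x_0$ to $bM$ could be very short on $M\setminus K_0$; invoking continuity from $v_0^{(N)}=u$ gives a neighbourhood $[0,t_*]$ on which (IV) holds, but $t_*$ depends on the \emph{final} family, so you cannot guarantee $t_*\ge s_1$. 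Controlling the cumulative $\Cscr^1(K)$-error does not help, since arcs to $bM$ leave $K$.

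The paper avoids this circularity as follows. After the initial h-Runge extension of $\{u_t\}$ to all of $M$, one first \emph{enlarges} $K$ and uses smoothness in $t$ to find $t_0\in(0,1)$ with $\dist_{u_t}(x_0,bK)>\tau-\delta/2$ for all $t\in[0,t_0]$. A \emph{single} labyrinth $L$ is then built in annuli $A_{j,k}\subset M\setminus K$ on which $f_t^3\ne0$ for the relevant $t$-range (this placement, which you do not address, is essential: enlarging $|g_t|$ only helps where $|f_t^3|$ is bounded below, and one also needs the bound $ds^2\ge|f_t^3\theta|^2$ off the labyrinth). The L\'opez--Ros multiplier is taken of the form $(1+\lambda t)$ with $\lambda$ chosen so large that for every $t\ge t_0$ the labyrinth already forces length $>\max\{\tau,1/\delta\}$ across each annulus. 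A single application of Lemma~\ref{lem:h-Runge} then yields $\wt u_t$. Property~(IV) for $t\le t_0$ follows from the $\Cscr^1(K)$-approximation and the choice of the enlarged $K$, while for $t\ge t_0$ (and property~(V) at $t=1$) it follows from the labyrinth estimate; no monotonicity is invoked.
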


\begin{proof}
By Theorem \ref{th:h-Runge} we may assume without loss of generality that $K$ is a compact connected smoothly bounded 
domain in $M$, and the isotopy $u_t$ extends to a smooth isotopy of conformal minimal immersions in $\CMI_*(M)$ of class 
$\Cscr^1(\overline M)$. We emphasize that the latter assumption can be fulfilled while preserving the initial immersion $u$ 
(see Theorem \ref{th:h-Runge} {\rm (i)}) and the third component of any immersion $u_t$ in the family 
(see Theorem \ref{th:h-Runge} {\rm (iii)}). 

Write $u=u_0$. Since $u_t$ depends smoothly on $t$, \eqref{eq:lemmaJX} ensures that, up to possibly enlarging $K$, 
we may also assume the existence of a number $t_0\in ]0,1[$ such that
\begin{equation}\label{eq:taudist}
\dist_{u_t}(x_0,bK)>\tau-\delta/2\quad \text{for all $t\in[0,t_0]$}.
\end{equation}

Let $\theta$ be a nowhere vanishing holomorphic $1$-form of class $\Ascr^1(\overline M)$. Write $2\partial u_t=f_t\theta$ 
where $f_t=(f_t^1,f_t^2,f_t^3)\colon \overline M\to\Agot^*$ is of class $\Ascr^0(\overline M)^3$.

Denote by $m\in\n$ the number of boundary components of $\overline M$. By the assumptions on $K$, the open set 
$M\setminus K$ consists precisely of $m$ connected components $O_1,\ldots, O_m$, each one containing in its 
boundary a component of $bM$. Let $z_j\colon O_j\to\c$ be a conformal parametrization such that $z_j(O_j)$ is a 
round open annulus of radii $0<r_j<R_j<+\infty$ (observe that $O_j$ is a bordered annulus), $j=1,\ldots,m$.

\begin{claim}\label{cla:annuli}
There exist numbers $t_0<t_1<\cdots<t_l=1$, $l\in\n$, and compact annuli $A_{j,k}\subset O_j$, 
$(j,k)\in I:=\{1,\ldots,m\}\times\{1,\ldots,l\}$, satisfying the following properties:
\begin{enumerate}[\rm (i)]
\item $A_{j,k}$ contains the topology of $O_j$ for all $(j,k)\in I$. In particular every arc $\gamma\subset \overline M$ connecting 
$x_0$ and $bM$ contains a sub-arc connecting the two boundary components of $A_{j,k}$ for all $k\in\{1,\ldots,l\}$, 
for some $j\in\{1,\ldots,m\}$.
\vspace{1mm}
\item $z_j(A_{j,k})$ is a round compact annulus of radii $r_{j,k}$ and $R_{j,k}$, where $r_j<r_{j,k}<R_{j,k}<R_j$, for all $(j,k)\in I$.
\vspace{1mm}
\item $A_{j,k}\cap A_{j',k'}=\emptyset$ for all $(j,k)\neq (j',k')\in I$.
\vspace{1mm}
\item $f_t^3$ does not vanish anywhere on $A_{j,k}$ for all $t\in [t_{k-1},t_k]$, for all $(j,k)\in I$.
\end{enumerate}
\end{claim}

\begin{proof}
Let $t\in [t_0,1]$. Since $u_t$ is nonflat, $f_t^3$ does not vanish identically and hence its zeros are isolated on $M$. 
Therefore there exist compact annuli $A_j^t\subset O_j$, $j=1,\ldots,m$, such that
$A_j^t$ contains the topology of $O_j$, $z_j(A_j^t)$ is a round compact annulus, and $f_t^3$ does not vanish anywhere on 
$A_j^t$. Since $f_t^3$ depends smoothly on $t$, there exists an open connected neighborhood $U_t$ of $t$ in $[t_0,1]$ 
such that $f_{t'}^3$ does not vanish anywhere on $A_j^t$ for all $t'\in U_t$. Since $[t_0,1]=\cup_{t\in[t_0,1]} U_t$ is compact, 
there exist numbers $t_0<t_1<\cdots <t_l=1$, $l\in \n$, such that $\cup_{k=1}^l U_{t_k}=[t_0,1]$. Set $A_{j,k}:=A_j^{t_k}$ and 
observe that properties {\rm (i)}, {\rm (ii)}, and {\rm (iv)} hold. To finish  we simply shrink the annuli $A_{j,k}$ in order to 
ensure {\rm (iii)}.
\end{proof}

Since $A_{j,k}\times [t_{k-1},t_k]$ is compact for all $(j,k)$ in the finite set $I$, property {\rm (iv)} gives a small 
number $\epsilon>0$ such that
\begin{equation}\label{eq:epsilon}
	\epsilon < \min \Big\{\Big|\frac{f_t^3\theta}{dz_j}\Big|(x)\colon x\in A_{j,k},\; 
		t\in [t_{k-1},t_k],\; (j,k)\in I\Big\}.
\end{equation}

The next step in the proof of the lemma consists of constructing on each annulus $A_{j,k}$ a Jorge-Xavier type labyrinth 
of compact sets (see \cite{JX} or \cite{Afer,AFL1,AFL2}). 

Let $N$ be a large natural number that will be specified later. 

Assume that $2/N<\min\{R_{j,k}-r_{j,k}\colon (j,k)\in I\}$. For any $n\in\{1,\ldots, 2N^2\}$, we set $s_{j,k;n}:=R_{j,k}-n/N^3$ 
and observe that $r_{j,k}< s_{j,k;n}<R_{j,k}$. We set
\begin{multline}\label{eq:Ljk;n}
	L_{j,k;n}:=\Big\{ x\in A_{j,k}\colon
 		s_{j,k;n}+\frac1{4N^3}\leq |z_j(x)|\leq s_{j,k;n-1}-\frac1{4N^3},\\ 
		\frac1{N^2}\leq \arg((-1)^n z_j(x))\leq 2\pi-\frac1{N^2} \Big\}\subset A_{j,k}.
\end{multline}
By {\rm (iii)}, the compact sets $L_{j,k;n}\subset M\setminus K$ are pairwise disjoint. We also set
\[
	L_{j,k}:=\bigcup_{n=1}^{2N^2} L_{j,k;n},\quad L:=\bigcup_{(j,k)\in I} L_{j,k},
\]
and observe that $K\cap L=\emptyset$ and $K\cup L$ is $\Oscr(M)$-convex. 
(See Fig.\ \ref{fig:Ljk}.)
\begin{figure}[ht]
    \begin{center}
    \scalebox{0.35}{\includegraphics{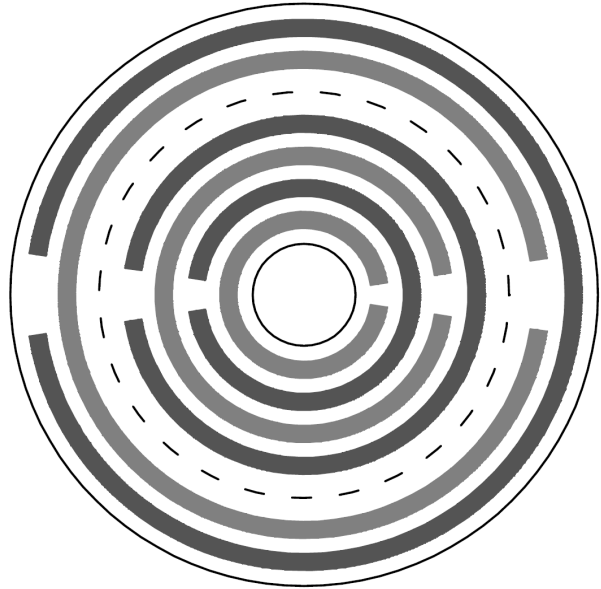}}
        \end{center}
\caption{The labyrinth $z_j(L_{j,k})$ inside the round annulus $z_j(A_{j,k})\subset\c$.}\label{fig:Ljk}
\end{figure}

Denote by $g_t$ the complex Gauss map of $u_t$, i.e. the meromorphic function on $M$
\[
g_t=\frac{f_t^3}{f_t^1-\imath f_t^2},\quad t\in[0,1],
\] 
and recall that
\begin{equation}\label{eq:Weierstrass}
f_t=\left( \frac12\Big(\frac1{g_t}-g_t \Big) , \frac{\imath}2\Big(\frac1{g_t}+g_t \Big) , 1 \right) f_t^3,\quad t\in [0,1]
\end{equation}
(see \eqref{eq:gaussmap} and \eqref{eq:WR}). Since $f_t$ is holomorphic on $M$, {\rm (iv)} ensures that $g_t$ has neither zeros nor poles on $A_{j,k}$ for all $t\in [t_{k-1},t_k]$, $(j,k)\in I$. In particular, since $I$ is finite and $[t_{k-1},t_k]$ is compact, there exists a constant $c_0>0$ such that $|g_t|>c_0$ on $A_{j,k}$ for all $t\in [t_{k-1},t_k]$ and all $(j,k)\in I$. Therefore, we may take a number $\lambda>0$ large enough so that 
\begin{equation}\label{eq:lambda}
|1+\lambda t|\cdotp |g_t|>2N^4\quad \text{on $A_{j,k}$ for all $t\in [t_{k-1},t_k]$ and all $(j,k)\in I$},
\end{equation}
recall that $t_0>0$.

Consider the family of holomorphic maps $h_t=(h_t^1,h_t^2,h_t^3)\colon K\cup L\to\Agot^*$, $t\in [0,1]$, given by
\begin{equation}\label{eq:ft1} 
h_t  =  f_t \quad \text{on $K$},
\end{equation}
\begin{equation}\label{eq:ft2}  
h_t  =  \left( \frac12\Big(\frac1{(1+\lambda t)g_t}-(1+\lambda t)g_t \Big) , \frac{\imath}2\Big(\frac1{(1+\lambda t)g_t}+(1+\lambda t)g_t \Big) , 1 \right) f_t^3 \quad \text{on $L$}.
\end{equation}
The map $h_t$ is said to be obtained from $f_t$ on $L$ by {\em a L\'opez-Ros transformation}; see \cite{LR}.

Since $f_t$ depends smoothly on $t\in [0,1]$, it is clear from \eqref{eq:Weierstrass}, \eqref{eq:ft1}, and \eqref{eq:ft2} that the family $h_t$ depends smoothly on $t\in [0,1]$ as well. Notice that, since $1+\lambda t\neq 0$, the holomorphicity of $f_t$ implies the one of $h_t$, $t\in[0,1]$.  Moreover, equations \eqref{eq:ft1}, \eqref{eq:ft2}, and \eqref{eq:Weierstrass} also give that
\begin{equation}\label{eq:f0}  
h_0=f_0|_{K\cup L},\qquad h_t^3=f_t^3|_{K\cup L}\quad \text{for all $t\in[0,1]$}.
\end{equation}
On the other hand, since $K\cap L=\emptyset$ and $L$ is the union of finitely many 
pairwise disjoint closed discs, \eqref{eq:ft1} ensures that
\begin{equation}\label{eq:ftflux} 
\int_C (h_t-f_t)\theta =0\quad \text{for every closed curve $C\subset K\cup L$ and every $t\in [0,1]$.} 
\end{equation}

In view of \eqref{eq:f0} and \eqref{eq:ftflux}, Lemma \ref{lem:h-Runge} provides a family of holomorphic maps 
\[
	\wt f_t=(\wt f_t^1,\wt f_t^2,\wt f_t^3)\colon M\to\Agot^*, 
\]
depending smoothly on $t\in [0,1]$, such that 
\begin{enumerate}[\rm (i)]
\setcounter{enumi}{4}
\item $\wt f_0=f_0$,
\vspace{1mm}
\item $\wt f_t^3=f_t^3$ for all $t\in [0,1]$,
\vspace{1mm}
\item $\int_C (\wt f_t-f_t)\theta =0$ for every closed curve $C\subset K$ and every $t\in [0,1]$, and 
\vspace{1mm}
\item $\wt f_t$ is as close to $h_t$ in the smooth topology on $K\cup L$ as desired, $t\in [0,1]$.
\end{enumerate}

For each $t\in [0,1]$, consider the conformal minimal immersion $\wt u_t\in \CMI_*(M)$ given by 
\[
\wt u_t(x):=u_t(x_0)+\Re\int_{x_0}^x \wt f_t\theta,\quad x\in M.
\]
Observe that $\wt u_t$ is well defined; see {\rm (vii)} and recall that the periods of $f_t\theta=2\partial u_t$ are purely imaginary. 

Let us check that the family $\{\wt u_t\}_{t\in [0,1]}$ satisfies the conclusion of the lemma. Indeed, since the family $\wt f_t$ 
depends smoothly on $t\in [0,1]$, the same is true for the family $\wt u_t$. Moreover, {\rm (viii)} and \eqref{eq:ft1} ensure that 
$\wt u_t$ can be chosen as close as desired to $u_t$ in the smooth topology on $K$ (uniformly with respect to $t\in [0,1]$); 
recall that $K$ is a compact smoothly bounded domain in $M$, hence arc-connected. On the other hand, we have that
\begin{equation}\label{eq:put}
\wt u_t(x_0)=u_t(x_0),\quad 
     2\, \partial \wt u_t=\wt f_t\theta\quad \text{for all $t\in[0,1]$},
\end{equation}
hence properties {\rm (I)}, {\rm (II)}, and {\rm (III)} directly follow from {\rm (v)}, {\rm (vi)}, and {\rm (vii)}, respectively. 

Let us prove {\rm (IV)} provided that the number $N$ is big enough and the approximation in {\rm (viii)} is close enough. 
Indeed, fix $t\in [0,1]$ and let us distinguish cases. 

First assume that $t\in [0,t_0]$. In this case \eqref{eq:taudist} ensures that 
\[
	\dist_{\wt u_t}(x_0,bM)\geq \dist_{\wt u_t}(x_0,bK)>\tau-\delta
\]
provided that $\wt u_t$ is close enough to $u_t$ on $K$.

Assume now that $t\in [t_0,1]$; hence $t\in [t_{k-1},t_k]$ for some $k\in\{1,\ldots,l\}$. 
Recall that the Riemannian metric $ds_{\wt u_t}^2$ induced on $M$ by the Euclidean metric of $\r^3$ 
via $\wt u_t$ is given by
\begin{equation}\label{eq:ds2ft}
	ds_{\wt u_t}^2=\frac12\, |\wt f_t\theta|^2 \geq |\wt f_t^3\theta|^2
\end{equation}
(see \eqref{eq:metric} and take into account that $x+\frac1{x}\geq 2$ for all $x>0$). In particular, {\rm (viii)} ensures that
\begin{enumerate}[\rm (i)]
\setcounter{enumi}{8}
\item $ds_{\wt u_t}^2$ is as close to $\frac12 |h_t\theta|^2$ 
as desired in the smooth topology on $K\cup L$.
\end{enumerate}
In view of property {\rm (i)} above, it suffices to show that $\length_{\wt u_t}(\gamma)>\max\{\tau-\delta,1/\delta\}$ for any arc $\gamma\subset A_{j,k}$ connecting the two boundary components of the annulus $A_{j,k}$, $j=1,\ldots,m$, where $\length_{\wt u_t}$ denotes the length function in the Riemannian surface $(M,ds_{\wt u_t}^2)$. This will also prove {\rm (V)}. 

Indeed, let $j\in\{1,\ldots,m\}$ and let $\gamma\subset A_{j,k}$ be an arc connecting the two boundary components of $A_{j,k}$. On the one hand, \eqref{eq:ft2} give that
\[
 \frac12 |h_t\theta|^2    = \frac14 \left( \frac1{|1+\lambda t||g_t|}+|1+\lambda t||g_t|\right)^2|f_t^3|^2\, |\theta|^2\quad \text{on $L$}.
\]
This, {\rm (ix)}, \eqref{eq:lambda}, and \eqref{eq:epsilon}, imply that
\begin{equation}\label{eq:est1}
ds_{\wt u_t}^2 > N^8 \epsilon^2 |dz_j|^2 \quad \text{on $L_{j,k}$}.
\end{equation}
On the other hand, \eqref{eq:ds2ft}, {\rm (vi)}, and \eqref{eq:epsilon} give that
\begin{equation}\label{eq:est2}
ds_{\wt u_t}^2 \geq |\wt f_t^3\theta|^2 = |f_t^3\theta|^2>\epsilon^2 |dz_j|^2 \quad \text{on $A_{j,k}$}.
\end{equation} 

The above two estimates ensure that
\begin{equation}\label{eq:est3}
\length_{\wt u_t}(\gamma)>\min\{\frac12,r_{j,k}\}\epsilon N,
\end{equation}
where $r_{j,k}>0$ is the inner radius of $z_j(A_{j,k})$ (see Claim \ref{cla:annuli} {\rm (ii)}).
Indeed, assume first that $\gamma$ {\em crosses} $L_{j,k;n}$, for some $n\in \{1,\ldots,2N^2\}$, in the sense that $\gamma$ contains a subarc $\wh \gamma\subset L_{j,k;n}$ such that $z_j(\wh \gamma)$ connects the two circumferences defining $z_j(L_{j,k;n})$; see \eqref{eq:Ljk;n} and Fig.\ \ref{fig:Ljk}. It follows that the Euclidean length of $z_j(\wh \gamma)$ is at least $1/2N^3$ (cf.\ \eqref{eq:Ljk;n}) and hence \eqref{eq:est1} ensures that $\length_{\wt u_t}(\gamma)>\length_{\wt u_t}(\wh \gamma)>\frac12 \epsilon N$. Assume now that $\gamma$ crosses $L_{j,k;n}$ for no $n\in \{1,\ldots,2N^2\}$. In this case, for any $n\in\{1,\ldots,2N^2-1\}$, $z_j(\gamma)$ surrounds the set $z_j(L_{j,k;n})$ in order to scape by the opening of $z_j(L_{j,k;n+1})$; see \eqref{eq:Ljk;n} and Fig.\ \ref{fig:Ljk}. Since this phenomenon happens at least $2N^2-1$ times, the Euclidean length of $z_j(\gamma)$ is larger than $(2N^2-1)r_{j,k}>Nr_{j,k}$ and hence \eqref{eq:est2} gives that $\length_{\wt u_t}(\gamma)>r_{j,k}\epsilon N$. 

In view of \eqref{eq:est3}, to conclude the proof it suffices to choose $N$ large enough so that $\min\{\frac12,r_{j,k}\}\epsilon N>\max\{\tau-\delta,1/\delta\}$ for all $(j,k)\in I$.
\end{proof}


\begin{proof}[Proof of Theorem \ref{th:main2}]
Let $\rho\colon M\to\r$ be a smooth strongly subharmonic Morse exhaustion function. 
We can exhaust $M$ by an increasing sequence 
\[
	M_0\subset M_1\subset\cdots\subset \bigcup_{i=0}^\infty M_i=M
\]
of compact smoothly bounded domains of the form 
\[
	M_i=\{p\in M\colon \rho(p)\le c_i\},
\]
where $c_0<c_1<c_2<\cdots$ is an increasing sequence of regular values of $\rho$
with $\lim_{i\to\infty} c_i =+\infty$. Each domain $M_i=\mathring M_i\cup bM_i$ is a compact bordered Riemann surface,
possibly disconnected. We may further assume that $\rho$ has at most one critical
point $p_i$ in each difference $M_{i+1}\setminus M_i$. It follows that $M_{i}$ is
$\Oscr(M)$-convex and its interior $\mathring M_i$ is Runge in $M$ for every $i\in \z_+$.

We proceed by induction. Choose a point $x_0\in \mathring M_0$ and set
\begin{equation}\label{eq:taui}
\tau_i:=\dist_{u}(x_0,bM_i)>0\quad \text{for all $i\in\z_+$}.
\end{equation}
The initial step is the smooth isotopy $\{u_t^0:=u|_{M_0}\in\CMI_*(M_0)\}_{t\in [0,1]}$. Assume inductively that we have already constructed for some $i\in\z_+$ a smooth isotopy $u_t^i\in\CMI_*(M_i)$ $(t\in [0,1])$ satisfying the following conditions:
\begin{enumerate}[\rm (a{$_i$})]
\item $u_0^i=u|_{M_i}$.
\vspace{1mm}
\item $\Flux_{u_1^i}(C)=\pgot(C)$ for every closed curve $C\subset M_i$.
\vspace{1mm}
\item $\dist_{u_t^i}(x_0,bM_i)>\tau_i-1/i$ for all $t\in[0,1]$. (This condition is omitted for $i=0$.) 
\vspace{1mm}
\item $\dist_{u_1^i}(x_0,bM_i)>i$.
\end{enumerate}
We will show that $\{u_t^i\}_{t\in [0,1]}$ 
can be approximated arbitrarily closely in the smooth topology on $[0,1]\times M_i$ by an 
isotopy  $\{u^{i+1}_t\}_{t\in [0,1]}$ satisfying the analogous properties over
a neighborhood of $M_{i+1}$. The limit $u_t=\lim_{i\to\infty} u^i_t\in \CMI_*(M)$ will clearly satisfy Theorem \ref{th:main2}. Indeed, properties {\rm (a$_i$)} imply that $u_0=u$, {\rm (b$_i$)} ensure that $\Flux_{u_1}=\pgot$, and {\rm (d$_i$)} give that $u_1$ is complete. Finally, if $u$ is complete, then 
\[
\lim_{i\to\infty} \Big(\tau_i-\frac{1}{i}\Big) =+\infty
\] 
(see \eqref{eq:taui}); hence properties {\rm (c$_i$)} guarantee the completeness of $u_t$ for all $t\in [0,1]$.

Observe that property {\rm (c$_i$)} will not be required in the construction of $u_t^{i+1}$. 
Therefore the construction is consistent with the fact that {\rm (c$_i$)} does not make sense for $i=0$.

\vspace{1mm}

{\bf The noncritical case:} 
Assume that $\rho$ has no critical value in $[c_{i},c_{i+1}]$. In this case
$M_{i}$ is a strong deformation retract of $M_{i+1}$. In view of {\rm (a$_i$)}, {\rm (b$_i$)}, and \eqref{eq:taui}, 
Lemma \ref{lem:JXt} can be applied to the data
\[
	\big( M =M_{i+1} \,,\, u=u|_{M_{i+1}} \,,\, K=M_{i} \,,\, u_t=u_t^{i} \,,\, x_0 \,,\, \tau=\tau_{i+1} \,,\, \delta =1/(i+1) \big),
\]
furnishing a smooth isotopy $u_t^{i+1}\in \CMI_*(M_{i+1})$ which satisfies conditions {\rm (a$_{i+1}$)}--{\rm (d$_{i+1}$)} 
and is as close as desired to $u_t^{i}$ in the smooth topology on $M_{i}$.

\vspace{1mm}

{\em The critical case:} Assume that $\rho$ has a critical point $p_{i+1}\in M_{i+1}\setminus M_{i}$. By the assumptions 
on $\rho$, $p_{i+1}$ is the only critical point of $\rho$ on $M_{i+1}\setminus M_{i}$ and is a Morse point. 
Since $\rho$ is strongly, the Morse index of $p_{i+1}$ is either $0$ or $1$. 

\vspace{1mm}

If the Morse index of $p_{i+1}$ is $0$, then a new (simply connected) component of the sublevel set $\{\rho\leq r\}$ appears 
at $p_{i+1}$ when $r$ passes the value $\rho(p_{i+1})$. In this case 
\[
	M_{i+1}=M_{i+1}'\cup M_{i+1}''
\] 
where $M_{i+1}'\cap M_{i+1}''=\emptyset$, $M_{i+1}''$ is a simply connected component of $M_{i+1}$, and $M_{i}$ 
is a strong deformation retract of $M_{i+1}'$. Extend $u_t^{i}$ by setting $u_t^{i}=u$ on $M_{i+1}''$ for all $t\in[0,1]$. 
This reduces the proof to the noncritical case.

If the Morse index of $p_{i+1}$ is $1$, then the change of topology of the sublevel set $\{\rho\leq r\}$ at $p_{i+1}$ 
is described by attaching to $M_{i}$ an analytic arc $\gamma\subset \mathring M_{i+1}\setminus M_{i}$.
In this case we take $r'\in ]\rho(p_{i+1}),c_{i+1}[$ and set $W=\{\rho\leq r'\}$. By the assumptions, 
we have that $W=\mathring W\cup bW$ is an 
$\Oscr(M)$-convex compact bordered Riemann surface which is a strong deformation retract of $M_{i+1}$. Arguing as in the critical 
case in Sec. \ref{sec:proof} we may approximate $\{u_t^{i}\}_{t\in [0,1]}$ arbitrarily closely in the smooth topology on $[0,1]\times M_i$ 
by an isotopy  $\{\wh u^{i}_t\}_{t\in [0,1]}\subset\CMI_*(W)$ satisfying $\wh u_0^{i}=u|_W$ and $\Flux_{\wh u_1^{i}}(C)=\pgot(C)$ for 
every closed curve $C\subset W$ (take into account {\rm (a$_i$)} and {\rm (b$_i$)}). Further, \eqref{eq:taui} ensures that 
$\dist_{\wh u_0^i}(x_0,bW)>\tau_i$. Again this reduces the construction to the noncritical case and concludes the proof of the theorem.
\end{proof}

In a different direction, we can construct an isotopy of conformal minimal immersions from a given immersion to a 
complete one without changing the flux map.

\begin{theorem}\label{th:main3}
Let $M$ be a connected open Riemann surface of finite topology. For every smooth isotopy $u_t\in\CMI_*(M)$ $(t\in [0,1])$ 
there exists a smooth isotopy $\wt u_t\in\CMI_*(M)$ $(t\in [0,1])$ of conformal minimal immersions such that $\wt u_0=u_0$, 
$\wt u_1$ is complete, the third component of $\wt u_t$ equals the one of $u_t$ for all $t\in [0,1]$, and the flux map of $\wt u_t$ 
equals the one of $u_t$ for all $t\in [0,1]$. Furthermore, if $u_0$ is complete then there exists such an isotopy where 
$\wt u_t$ is complete for all $t\in [0,1]$.
\end{theorem}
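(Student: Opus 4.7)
The plan is to exhaust $M$ by a sequence $M_0 \subset M_1 \subset \cdots$ of $\Oscr(M)$-convex compact bordered subdomains and to construct inductively a sequence of isotopies $u_t^i \in \CMI_*(M_i)$ by repeatedly invoking Lemma \ref{lem:JXt}; the latter is perfectly tailored to our needs, since its conclusions (I)--(V) simultaneously pin down the initial immersion, preserve the third component, preserve the flux on cycles in $K$, and inflate the intrinsic distance from a fixed basepoint to the boundary. Because $M$ has finite topology, I would choose a smooth strongly subharmonic Morse exhaustion function $\rho$ whose (finitely many) critical points all lie in the base set $M_0 = \{\rho \le c_0\}$; then for every $i \ge 0$ the inclusion $M_i \hookrightarrow M_{i+1}$ is a deformation retraction and each $M_i$ carries the full topology of $M$, so no ``critical case'' arises in the induction. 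Fix $x_0 \in \mathring M_0$ and set $\tau_i := \dist_{u_0}(x_0, bM_i)$; note that $\tau_i \to \infty$ precisely when $u_0$ is complete.

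Starting from $u_t^0 := u_t|_{M_0}$, the inductive invariants to maintain for $u_t^i \in \CMI_*(M_i)$ are:
\begin{itemize}
\item[$(\mathrm{a}_i)$] $u_0^i = u_0|_{M_i}$;
\item[$(\mathrm{b}_i)$] $(u_t^i)^3 = u_t^3|_{M_i}$ for every $t \in [0,1]$;
\item[$(\mathrm{c}_i)$] $\Flux_{u_t^i}(C) = \Flux_{u_t}(C)$ for every closed curve $C \subset M_i$ and every $t$;
\item[$(\mathrm{d}_i)$] $\dist_{u_t^i}(x_0, bM_i) > \tau_i - 1/i$ for all $t$ (for $i \ge 1$);
\item[$(\mathrm{e}_i)$] $\dist_{u_1^i}(x_0, bM_i) > i$;
\end{itemize}
together with the requirement that $u_t^{i+1}$ be chosen as close to $u_t^i$ on $M_i$ in the smooth topology as the subsequent limit argument requires.

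For the inductive step I would apply Lemma \ref{lem:JXt} to the data $(M, u, K, u_t, x_0, \tau, \delta) := (M_{i+1}, u_0|_{M_{i+1}}, M_i, u_t^i, x_0, \tau_{i+1}, 1/(i+1))$. Its hypotheses are immediate from the invariants: $M_i$ is $\Oscr(M_{i+1})$-convex and contains the topology of $M_{i+1}$; $u_0^i = u_0|_{M_i}$ by $(\mathrm{a}_i)$; and because $u_t$ is defined on all of $M$, the harmonic function $(u_t^i)^3 = u_t^3|_{M_i}$ extends to $M_{i+1}$ as $u_t^3|_{M_{i+1}}$, so its holomorphic differential extends accordingly. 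The conclusions (I)--(V) then deliver $u_t^{i+1} := \wt u_t$ satisfying $(\mathrm{a}_{i+1})$--$(\mathrm{e}_{i+1})$, where (II) combined with the uniqueness of the harmonic extension yields $(\mathrm{b}_{i+1})$, and (III) together with $M_i$ carrying the topology of $M_{i+1}$ yields $(\mathrm{c}_{i+1})$.

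With the approximation rates chosen summable, $u_t^i$ converges in the smooth topology on compacts of $M$ to a smooth isotopy $\wt u_t \in \CMI_*(M)$ with $\wt u_0 = u_0$, $\wt u_t^3 = u_t^3$, and $\Flux_{\wt u_t} = \Flux_{u_t}$ (since every loop in $M$ eventually lies in some $M_i$). Invariant $(\mathrm{e}_i)$ gives $\dist_{\wt u_1}(x_0, bM_i) \to \infty$, so $\wt u_1$ is complete; and when $u_0$ is complete, $\tau_i \to \infty$ and $(\mathrm{d}_i)$ force $\dist_{\wt u_t}(x_0, bM_i) \to \infty$ uniformly in $t$, proving completeness of the entire isotopy. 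All the genuine work is hidden inside Lemma \ref{lem:JXt}; the only nontrivial bookkeeping here is to track the approximation rates carefully enough that the Riemannian distances from the inductive stages descend to the limit metric, which is standard once the $\Cscr^1$ control in Lemma \ref{lem:JXt} is used.
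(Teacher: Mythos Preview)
Your proof is correct and follows essentially the same route as the paper's: the same exhaustion by $M_i$ with all critical points of $\rho$ confined to $M_0$, the same five inductive invariants, the same application of Lemma~\ref{lem:JXt} at each step, and the same limit argument. Your treatment is in fact slightly more explicit than the paper's about why the approximation rates must be controlled so that the intrinsic distance bounds pass to the limit, but otherwise the arguments are identical.
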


\begin{proof}
Let $\rho\colon M\to\r$ be a smooth strongly subharmonic Morse exhaustion function. 
Since $M$ is of finite topology, we can exhaust it by a
sequence 
\[
	M_0\subset M_1\subset\cdots\subset \bigcup_{i=0}^\infty M_i=M
\]
of compact smoothly bounded domains of the form 
\[
	M_i=\{p\in M\colon \rho(p)\le c_i\},
\]
where $c_0<c_1<c_2<\cdots$ is an increasing sequence of regular values of $\rho$
such that $\lim_{i\to\infty} c_i =+\infty$ and there is no critical point of $\rho$ in $M\setminus M_0$. 
Then each domain $M_i=\mathring M_i\cup bM_i$ is a connected compact bordered Riemann surface which is 
an $\Oscr(M)$-convex strong deformation retract of $M_{i+1}$ and of $M$.
Pick $x_0\in \mathring M_0$ and set
\[
	\tau_i=\dist_{u_0}(x_0,bM_i)>0\quad \forall i\in\z_+.
\]

We proceed by induction. The initial step is the isotopy 
\[
	\{u_t^0:=u_t|_{M_0}\}_{t\in[0,1]}.
\]
Assume inductively that we have already constructed for some $i\in\z_+$ a smooth isotopy 
\[
	u_t^i\in\CMI_*(M_i),\quad t\in [0,1]
\]
satisfying the following conditions:
\begin{itemize}
\item $u_0^i=u_0|_{M_i}$.
\vspace{1mm}
\item The third component of $u_t^i$ equals the third component of $u_t$ restricted to $M_i$ for all $t\in[0,1]$.
\vspace{1mm}
\item $\Flux_{u_t^i}(C)=\Flux_{u_t}(C)$ for every closed curve $C\subset M_i$ and all $t\in[0,1]$.
\vspace{1mm}
\item $\dist_{u_t^i}(x_0,bM_i)>\tau_i-1/i$ for all $t\in[0,1]$, $i\in\n$.
\vspace{1mm}
\item $\dist_{u_1^i}(x_0,bM_i)>i$.
\end{itemize}
Reasoning as in the proof of Theorem \ref{th:main2}, Lemma \ref{lem:JXt} ensures that $\{u_t^i\}_{t\in [0,1]}$ can be approximated arbitrarily closely in the smooth topology on $[0,1]\times M_i$ by an isotopy  $\{u^{i+1}_t\}_{t\in [0,1]}$ satisfying the analogous properties over $M_{i+1}$. The limit $\wt u_t=\lim_{i\to\infty} u^i_t\in \CMI_*(M)$ clearly satisfies Theorem \ref{th:main3}.
\end{proof}


\section{On the topology of the space of conformal minimal immersions}
\label{sec:topology}

Theorem \ref{th:main}  amounts to saying that every 
path connected component of $\CMI(M)$ contains a path 
connected component of $\Re\NC(M)$. (See Sec.\ \ref{sec:prelim} for the notation.) 
The proof (see Secs.\ \ref{sec:proof1} and \ref{sec:proof})
shows that a nonflat $u_0\in \CMI_*(M)$ can be connected by a path 
in $\CMI_*(M)$ to some $u_1\in \Re\NC_*(M)$.  The following natural question appears:

\begin{problem}
Are the natural inclusions 
\[
	\Re\NC(M) \hookrightarrow \CMI(M), \quad  
	\Re\NC_*(M) \hookrightarrow \CMI_*(M),
\]
weak (or even strong) homotopy equivalences? 
\end{problem}

In order to show that the inclusion $\iota\colon\Re\NC(M) \hookrightarrow \CMI(M)$ is a 
weak homotopy equivalence (i.e., $\pi_k(\iota)\colon\pi_k(\Re\NC(M)) \stackrel{\cong}{\longrightarrow} \pi_k(\CMI(M))$ 
is an isomorphism of the homotopy groups for each $k=0,1,\ldots$),  it suffices to prove that
$\iota$ satisfies the following:

\vspace{1mm}

{\bf Parametric h-principle}: 
Given a pair of compact Hausdorff spaces $Q',Q$, with $Q' \subset Q$ (it suffices to consider 
Euclidean compacts, or even just finite polyhedra) 
and a continuous map $F\colon Q\to \CMI(M)$ such that $F(Q')\subset  \Re\NC(M)$,
we can deform $F$ through a homotopy $F_t\colon Q\to \CMI(M)$ $(t\in [0,1])$ that is fixed 
on $Q'$ to a map $F_1\colon Q\to \Re\NC(M)$, as illustrated by the following diagram.
\vskip -5mm
\[
	\xymatrix{ 
	  Q'  \ar[d]_{incl}  \ar[r] &    \Re\NC(M) \ar[d]^{\iota} \\
	  Q    \ar[r]^{\!\!\!\!\!\! F}  \ar[ur]^{F_1}	 & \CMI(M) } 
\]
\noindent See \cite{EM,Gromov:book} and \cite[Chapter 5]{F2011} for more details. 

We now describe a connection to the underlying topological questions. 
Fix a nowhere vanishing holomorphic $1$-form $\theta$ on $M$.  
(Such a $1$-form exists by the Oka-Grauert principle, cf.\ Theorem 5.3.1 in \cite[p.\ 190]{F2011}.)
It follows from (\ref{eq:di-u}) that for every $u\in \CMI(M)$ we have
\[
	2\di u= f \theta,
\]
where $f=(f_1,f_2,f_3) \colon M\to \Agot^*$ 
is a holomorphic map satisfying 
\[
	\int_C \Re (f\theta)= \int_C du=0
\]
for any closed curve $C$ in $M$.
Furthermore, we have that $u=\Re F$ for some $F\in \NC(M)$ if and only if 
$\int_C f\theta = 0$ for all closed curves $C$ in $M$.

\begin{problem} \label{pr:HE}
Is the map
\begin{equation}\label{eq:Theta}
	\Theta\colon \CMI(M)\to \Oscr(M,\Agot^*),\quad \Theta(u) = 2\di u/\theta
\end{equation}
a weak homotopy equivalence?  Does it satisfy the parametric h-principle?
\end{problem}

Let $\iota\colon \Oscr(M,\Agot^*) \hookrightarrow \Cscr(M,\Agot^*)$ denote the inclusion 
of the space of  holomorphic maps $M\to\Agot^*$ into the space of continuous maps.
Since $\Agot^*$ is an {\em Oka manifold} \cite[Sect.\ 4]{AF2}, $\iota$ is a weak 
homotopy equivalence \cite[Corollary 5.4.8]{F2011}.
Hence the map 
\[
	\wt \Theta = \iota\circ \Theta\colon \CMI(M)\to \Cscr(M,\Agot^*)
\]
is a weak homotopy equivalence if and only if $\Theta$ is.

By \cite[Theorem 2.6]{AF2} every 
$f_0\in \Cscr(M,\Agot^*)$ can be connected by a path in $\Cscr(M,\Agot^*)$ to a 
holomorphic map $f_1\in \Oscr(M,\Agot^*)$ such that $f_1\theta$ is an exact 
holomorphic $1$-form in $M$; thus $f_1=\Theta(\Re F)$ for some $F\in \NC(M)$.
In particular, we have the following consequence.

\begin{corollary}\label{cor:cor1}
Let $M$ be an open Riemann surface and let $\theta$ be a nonvanishing holomorphic $1$-form 
on $M$. Every connected component of $\Cscr(M,\Agot^*)$ contains 
a map of the form $2\di u/\theta$ where $u\in \CMI(M)$.
\end{corollary}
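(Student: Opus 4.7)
The corollary follows almost directly from the cited result \cite[Theorem 2.6]{AF2}, which I would invoke as a black box. The plan is to take an arbitrary $f_0 \in \Cscr(M,\Agot^*)$, produce within its path component a holomorphic representative $f_1$ whose associated $1$-form $f_1\theta$ is exact, and then recognize $f_1$ as $\Theta(u)$ for a suitable $u \in \CMI(M)$.

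More concretely, first I would fix $f_0 \in \Cscr(M,\Agot^*)$ and apply \cite[Theorem 2.6]{AF2} to obtain a continuous path $f_s \in \Cscr(M,\Agot^*)$ $(s \in [0,1])$ connecting $f_0$ to some holomorphic map $f_1 \in \Oscr(M,\Agot^*)$ with the property that $\int_C f_1 \theta = 0$ for every closed curve $C$ in $M$. This is precisely the exactness of the holomorphic $\Agot$-valued $1$-form $f_1 \theta$ on the open Riemann surface $M$. Next, fixing a basepoint $p_* \in M$, I would define
\[
   F(p) = \int_{p_*}^p f_1 \theta, \qquad p \in M,
\]
which is well defined independently of the path by the vanishing of all periods. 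Since $f_1$ takes values in $\Agot^* = \Agot \setminus \{0\}$ and $\theta$ is nowhere vanishing, $dF = f_1\theta$ is a holomorphic immersion satisfying the nullity condition (\ref{eq:sumuzero}), so $F \in \NC(M)$. Setting $u = \Re F \in \CMI(M)$, the identity $2\di u = dF = f_1 \theta$ yields $f_1 = 2\di u / \theta = \Theta(u)$.

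Finally, the existence of the path $\{f_s\}_{s \in [0,1]}$ in $\Cscr(M,\Agot^*)$ joining $f_0$ to $f_1 = \Theta(u)$ shows that $f_0$ and $\Theta(u)$ lie in the same path-connected component of $\Cscr(M,\Agot^*)$. Since $\Cscr(M,\Agot^*)$ is locally path-connected (being a mapping space into a manifold), its connected components coincide with its path-connected components, so the conclusion follows. The only nontrivial ingredient is \cite[Theorem 2.6]{AF2} itself, whose proof is carried out in that reference using period-dominating sprays and the Oka principle for $\Agot^*$; here I am content to invoke it, and all other steps are routine.
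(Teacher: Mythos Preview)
Your argument is correct and follows essentially the same route as the paper: the corollary is stated as an immediate consequence of \cite[Theorem 2.6]{AF2}, which produces a path from any $f_0\in\Cscr(M,\Agot^*)$ to a holomorphic $f_1$ with $f_1\theta$ exact, and one then integrates to get $F\in\NC(M)$ and sets $u=\Re F$. Your added remark about path-components versus connected components is a harmless extra detail.
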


It is natural to ask how many connected components does $\Cscr(M,\Agot^*)$ have. 
The answer comes from the theory of spin structures on Riemann surfaces;
we refer to the preprint \cite{Kusner} by Kusner and Schmitt. 
Here we give a short self-contained explanation; we wish to thank Jaka Smrekar for his help at this point.

Denote the coordinates on $\c^3$ by $z=\xi + \imath \eta$, with $\xi,\eta\in\r^3$, and let 
\[
	\pi\colon \c^3=\r^3\oplus \imath \r^3 \to\r^3
\]
be the projection $\pi(\xi +\imath \eta)=\xi$. Then $\pi\colon \Agot^*\to \r^3\setminus \{0\}$ 
is a real analytic fiber bundle with circular fibers
\begin{equation}\label{eq:fiberbundle2}
	 \Agot \cap \pi^{-1}(\xi) = \{\xi+\imath \eta\in\c^3: \xi\cdotp \eta=0,\ |\xi|=|\eta|\}
	 \cong \mathbb{S}^1, \quad \xi \in \r^3\setminus \{0\}.
\end{equation}
Let $\mathbb{S}^2=\{\xi\in \r^3\colon|\xi|=1\}$,  the unit sphere of $\r^3$.
Then $\Agot^*$ is homotopy equivalent to $\Agot^* \cap \pi^{-1}(\mathbb{S}^2)$,
and by (\ref{eq:fiberbundle2}) this is the unit circle bundle of the tangent bundle 
of $\mathbb{S}^2$:  
\[
	\Agot^*\cap \pi^{-1}(\mathbb{S}^2) = S(T\mathbb{S}^2)\cong SO(3).
\] 
It follows that    
\begin{equation}\label{eq:pi1A}
	\pi_1(\Agot^*)\cong \pi_1(SO(3)) \cong \z_2:=\z/2\z.
\end{equation}

An open Riemann surface $M$ has the homotopy type of a finite or countable 
wedge of circles, one for each generator of $H_1(M;\z)$. Fix a pair of points $p\in M$, $q\in \Agot^*$, and let 
$\Cscr_*(M,\Agot^*)$ denote the space of all continuous maps sending $p$ to $q$. It is easily seen that 
$\Cscr(M,\Agot^*)\cong \Cscr_*(M,\Agot^*)\times \Agot^*$. The space $\Cscr_*(M,\Agot^*)$ is homotopy equivalent
to the cartesian product of loop spaces $\Omega(\Agot^*) =\Cscr_*(\mathbb{S}^1,\Agot^*)$,
one for each generator of $H_1(M;\z)$. Since the connected components of 
$\Omega(\Agot^*)$ coincide with the elements of the fundamental group 
$\pi_1(\Agot^*)\cong\z_2$ (see (\ref{eq:pi1A})) and $\Agot^*$ is connected, we infer the following.

%
%
%
%
\begin{proposition} \label{prop:components}
If $M$ is an open Riemann surface and $H_1(M;\z)\cong \z^l$ $(l\in \z_+\cup\{\infty\})$ then 
the connected components of $\Cscr(M,\Agot^*)$ are in one-to-one correspondence with 
the elements of the free abelian group $(\z_2)^l$. Hence each of the spaces $\NC(M)$ and $\CMI(M)$ 
has at least $2^l$ connected components.
\end{proposition}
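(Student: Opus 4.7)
The plan is to reduce the problem to a purely topological computation of $\pi_0(\Cscr(M,\Agot^*))$ and then transfer the lower bound to $\NC(M)$ and $\CMI(M)$ via the Oka property together with Corollary \ref{cor:cor1}. First, I would use the homotopy equivalence $M \simeq W$, where $W=\bigvee_{\alpha \in A} S^1_\alpha$ is a wedge of circles indexed by a set of cardinality $l$ (this is recalled in the excerpt, since every open Riemann surface has the homotopy type of a $1$-dimensional CW complex). A homotopy equivalence induces a bijection on free homotopy classes into any space, so
\[
   \pi_0(\Cscr(M,\Agot^*))=[M,\Agot^*]=[W,\Agot^*].
\]

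Next I would compute $[W,\Agot^*]$. Since $\Agot^*$ is path-connected and $\pi_1(\Agot^*)=\z_2$ is abelian (the fundamental group computation via $\Agot^*\cap\pi^{-1}(\mathbb{S}^2)\cong S(T\mathbb{S}^2)\cong SO(3)$ is done in the preceding paragraphs), free and based homotopy classes into $\Agot^*$ agree. The universal property of the wedge then identifies
\[
   [W,\Agot^*]_*=\prod_{\alpha\in A}[S^1,\Agot^*]_*=\prod_{\alpha\in A}\pi_1(\Agot^*)=(\z_2)^l,
\]
yielding the bijection asserted in the proposition between $\pi_0(\Cscr(M,\Agot^*))$ and the elements of $(\z_2)^l$, a set of cardinality $2^l$.

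For the lower bound on $\NC(M)$ and $\CMI(M)$ I would use the commutative diagram of continuous maps
\[
   \NC(M)\;\longrightarrow\;\CMI(M)\;\stackrel{\Theta}{\longrightarrow}\;\Oscr(M,\Agot^*)\;\stackrel{\iota}{\hookrightarrow}\;\Cscr(M,\Agot^*),
\]
where the first arrow is $F\mapsto\Re F$ and $\Theta$ is the map in (\ref{eq:Theta}). Because $\Agot^*$ is an Oka manifold, $\iota$ is a weak homotopy equivalence, hence a bijection on $\pi_0$. By Corollary \ref{cor:cor1} (which relies on \cite[Theorem 2.6]{AF2}), every component of $\Cscr(M,\Agot^*)$ contains a map of the form $2\di u/\theta$ with $u=\Re F$ for some $F\in\NC(M)$. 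Consequently the composition $\pi_0(\NC(M))\to\pi_0(\Cscr(M,\Agot^*))$ is surjective, so $|\pi_0(\NC(M))|\ge 2^l$; since this surjection factors through $\pi_0(\CMI(M))$, the same bound holds there.

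The only real subtlety I anticipate concerns the case $l=\infty$: one must check that the two formal identifications used above — the bijection $[\bigvee_{\alpha}S^1_\alpha,Y]_*\to\prod_\alpha[S^1,Y]_*$ and the coincidence of based and free homotopy classes when $\pi_1(Y)$ is abelian — remain valid for an infinite (countable) wedge. Both are essentially formal: the first is the based coproduct property of the wedge, and the second follows because conjugation by $\pi_1(Y)$ acts trivially on $\pi_1(Y)^A$ when $\pi_1(Y)$ is abelian. Beyond this, no new analytic input is required: the proposition is really a packaging of the topological count of components of $\Cscr(M,\Agot^*)$ with the surjectivity statement already available from Corollary \ref{cor:cor1}.
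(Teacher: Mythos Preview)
Your proposal is correct and follows essentially the same approach as the paper. The paper computes $\pi_0(\Cscr(M,\Agot^*))$ by writing $\Cscr(M,\Agot^*)\cong\Cscr_*(M,\Agot^*)\times\Agot^*$ and identifying $\Cscr_*(M,\Agot^*)$ with a product of loop spaces $\Omega(\Agot^*)$, while you compute the same set of components as $[W,\Agot^*]_*=\prod_\alpha\pi_1(\Agot^*)$ after noting that free and based homotopy classes coincide since $\pi_1(\Agot^*)\cong\z_2$ is abelian; these are two phrasings of the same elementary computation. For the lower bound on $\pi_0(\NC(M))$ and $\pi_0(\CMI(M))$, the paper invokes Theorem~\ref{th:main} together with Corollary~\ref{cor:cor1}, whereas you (correctly) observe that the paragraph preceding Corollary~\ref{cor:cor1} already shows that the representative $u$ can be taken of the form $\Re F$ with $F\in\NC(M)$, making the appeal to Theorem~\ref{th:main} unnecessary for this particular conclusion.
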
 

The last statement  follows from Theorem \ref{th:main} and Corollary \ref{cor:cor1}.


\subsection*{Acknowledgements}
A.\ Alarc\'{o}n is supported by the Ram\'on y Cajal program of the Spanish Ministry of Economy and Competitiveness, and is partially supported by the MINECO/FEDER grants MTM2011-22547 and MTM2014-52368-P, Spain. 
F.\ Forstneri\v c is supported in part by the research program P1-0291 and the grant J1-5432 from ARRS, Republic of Slovenia. The authors wish to thank Jaka Smrekar for his help with the topological matters in Sect.\ \ref{sec:topology}.
We also thank the referee for useful suggestions which lead to improved presentation.


\vfill\eject

\noindent Antonio Alarc\'{o}n

\noindent Departamento de Geometr\'{\i}a y Topolog\'{\i}a and Instituto de Matem\'aticas IEMath-GR, Universidad de Granada, E--18071 Granada, Spain.

\noindent  e-mail: {\tt alarcon@ugr.es}

\vspace*{0.5cm}

\noindent Franc Forstneri\v c

\noindent Faculty of Mathematics and Physics, University of Ljubljana, and Institute
of Mathematics, Physics and Mechanics, Jadranska 19, SI--1000 Ljubljana, Slovenia.

\noindent e-mail: {\tt franc.forstneric@fmf.uni-lj.si}
\end{document}